\definecolor{Black}{cmyk}{0,0,0,1}
\definecolor{OrangeRed}{cmyk}{0,0.6,1,0} 
\definecolor{DarkBlue}{cmyk}{1,1,0,0.20}
\definecolor{myblue}{rgb}{0.66,0.78,1.00}
\definecolor{Violet}{cmyk}{0.79,0.88,0,0}
\definecolor{Lavender}{cmyk}{0,0.48,0,0}
\newcommand\spiral{}
\def\spiral[#1](#2)(#3:#4:#5){
	\pgfmathsetmacro{\domain}{pi*#3/180+#4*2*pi}
	\draw [#1,shift={(#2)}, domain=0:\domain,variable=\t,smooth,samples=int(\domain/0.08)] plot ({\t r}: {#5*\t/\domain})
}
\newtheorem{theorem}{Theorem}[section]
\newtheorem{lemma}[theorem]{Lemma}
\newtheorem{corollary}[theorem]{Corollary}
\newtheorem{proposition}[theorem]{Proposition}
\theoremstyle{definition}
\newtheorem{definition}[theorem]{Definition}
\newtheorem{remark}[theorem]{Remark}
\newcommand{\C}{\mathbb{C}}
\newcommand{\Z}{\mathbb{Z}}
\newcommand{\R}{\mathbb{R}}
\renewcommand{\Re}{\operatorname{Re}}
\newcommand{\bea}{\begin{eqnarray*}}
\newcommand{\eea}{\end{eqnarray*}}
\numberwithin{equation}{section}
\begin{document}
\title[Weighted $L^2$ version of Mergelyan and Carleman approximation]{Weighted $L^2$ version of Mergelyan and Carleman approximation}

\author{S\'everine Biard}
\author{John Erik Forn\ae ss}
\author{Jujie Wu}
\address{ {
$^{*}$ Corresponding author  \\
Jujie Wu} \
\\{\it E-mail address:}  jujie.wu@ntnu.no
\\{ School of Mathematics and statistics, Henan University}\\{ Jinming Campus of Henan University, Jinming District, City of Kaifeng, Henan Province. P. R. China, 475001}   \\ \ \ \ \  \ \ \ \ \ \ \ \ \ \ \ \ \ \ \ \ \ \ \ \ \ \ \ \ \ \ \  \ \ \ \ \ \ \  \ \ \ \ \ \ \  \ \ \ \ \ \ \   \ \ \    \ \ \ \ \ \ \ \ \ \  \ \ \ \ \ \ \ \ \ \  \ \ \ \ \ \ \ \ \ \ \ \ \ \ \ \ \ \ \ \ \ \ \ \ \ \ \ \ \ \ \ \  \ \ \ \ \ \ \ \ \ \ \ \ \ \ \ \ \ \ \  \ \ \ \ \ \ \ \ \ \ \ \ \ \ \ \ \ \ \ \ \ \ \ \ \ \ \ \ \ 
 \  \ \ \  \ \ \ \ \ \ \ \  \ \ \ \ \ \ \ \ \ \ \ \ \    \\{Department of Mathematical Sciences, NTNU}\\{ Sentralbygg 2, Alfred Getz vei 1, 7034 Trondheim, Norway} }

\address{{S\'everine Biard}
\\{\it E-mail address:} biard@hi.is
\\{Science Institute, University of Iceland}\\{Dunhagi 3, IS-107 Reykjavik, Iceland}\\
{\textit{current address}: Univ. Polytechnique Hauts-de-France,  - LMI - Laboratoire de Math\'ematiques pour l'Ing\'enieur, FR CNRS 2956, F-59313 Valenciennes, France}\\
\textit{current e-mail address}: severine.biard@uphf.fr}

\address{{John Erik Forn\ae ss}
\\{\it E-mail address:} john.fornass@ntnu.no
\\{ Department of Mathematical Sciences, NTNU}\\{ Sentralbygg 2, Alfred Getz vei 1, 7034 Trondheim, Norway}}


\date{}
\maketitle

\bigskip

\begin{abstract}
We study the density of polynomials in $H^2(E,\varphi)$, the space of square integrable functions with respect to $e^{-\varphi}dm$ and holomorphic on the interior of $E$ in $\mathbb{C}$, where $\varphi$ is a subharmonic function and $dm$ is a measure on $E$. We give a result where $E$ is the union of a Lipschitz graph and a Carath\'{e}odory domain, which we state as a weighted $L^2$-version of the Mergelyan theorem. 
We also prove a weighted $L^2$-version of the Carleman theorem.
\bigskip

\noindent{{\sc Mathematics Subject Classification} (2010):30D20, 30E10, 30H50, 31A05}

\smallskip

\noindent{{\sc Keywords}: Mergelyan theorem, Carleman theorem, Weighted $L^2$- spaces, Rectifiable non-Lipschitz arc} 
\end{abstract}
\tableofcontents

\section{Introduction}
Let $E \subset \C$ be a measurable set, $m$ a measure on $E$ and $\varphi$ a measurable function, locally bounded above on $E$. Denote by $L^2(E ,\varphi)$ the space of measurable functions $f$ in $E $ which are square integrable with respect to the measure $e^{-\varphi}dm$ i.e.,
$$L^2(E,\varphi):=\left \{f\mid \Vert f\Vert_{L^2(E,\varphi)}^2=\int_E\vert f\vert^2e^{-\varphi}dm<\infty \right \}.$$
Set 
$$
H^2(E, \varphi) = L^2(E, \varphi) \cap \mathcal{O}(\mathring{E})
$$ 
 where $\mathcal{O}(\mathring{E})$ stands for the space of holomorphic functions on the interior of $E$.

 In this paper, we generalize the classical holomorphic approximation theorems to weighted $L^2$-spaces. The theory of holomorphic approximation started in 1885 with two now classical theorems: the Weierstrass theorem and the Runge theorem. The first one states that a continuous function on a bounded interval of $\mathbb{R}$ can be approximated arbitrarily well by polynomials for the uniform convergence on the interval. We prove the following weighted $L^2$-version of the Weierstrass theorem: 
 \begin{theorem}\label{Weierstrass ab}
 Let $\gamma$ be a Lipschitz graph over a bounded interval and $\varphi$ a subharmonic function in a neighborhood of $\gamma$ in $\mathbb{C}$. Then polynomials are dense in $L^2(\gamma, \varphi)$.
\end{theorem}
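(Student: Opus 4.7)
\medskip
\noindent\emph{Proof plan.} I will prove density via a Hahn--Banach duality argument in the Hilbert space $L^2(\gamma,\varphi)$, the key input being Mergelyan's theorem applied to the arc $\gamma$.

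First, observe that since $\gamma$ is a Lipschitz graph over a bounded interval, it is a compact rectifiable simple arc in $\mathbb{C}$ whose complement is connected (an elementary topological fact about graphs of continuous functions). Having empty interior and connected complement, $\gamma$ falls under Mergelyan's theorem: the polynomials in $z$ are uniformly dense in $C(\gamma)$.

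Second, upper semicontinuity of the subharmonic function $\varphi$ on the compact set $\gamma$ gives $\varphi\le M<\infty$, hence $e^{-\varphi}\ge e^{-M}>0$ on $\gamma$. In particular $e^{-\varphi}$ is bounded below by a positive constant, and the statement of the theorem implicitly requires that polynomials in $z$ lie in $L^2(\gamma,\varphi)$, i.e.\ that $\int_\gamma e^{-\varphi}\,ds<\infty$.

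Third, let $\mathcal{P}$ denote the closure of the polynomials in $L^2(\gamma,\varphi)$, and suppose $g\in L^2(\gamma,\varphi)$ lies in $\mathcal{P}^{\perp}$, so that $\int_\gamma z^n\,\overline{g}\,e^{-\varphi}\,ds=0$ for every $n\ge 0$. Consider the measure
$$d\mu:=\overline{g(z)}\,e^{-\varphi(z)}\,ds(z);$$
by Cauchy--Schwarz, $|\mu|(\gamma)\le\|g\|_{L^2(\gamma,\varphi)}\|1\|_{L^2(\gamma,\varphi)}<\infty$, so $\mu$ is a finite complex Borel measure on $\gamma$. The uniform density of polynomials in $C(\gamma)$ extends the orthogonality from polynomials to $\int_\gamma h\,d\mu=0$ for every $h\in C(\gamma)$, and the Riesz--Markov theorem forces $\mu=0$. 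Since $e^{-\varphi}>0$ on $\gamma$, this yields $g=0$ in $L^2(\gamma,\varphi)$, proving $\mathcal{P}=L^2(\gamma,\varphi)$.

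The main technical point is the Mergelyan input --- the connectivity of $\mathbb{C}\setminus\gamma$ and the resulting uniform polynomial approximation of continuous functions on $\gamma$. For a Lipschitz graph this is immediate, but it is the seed of the difficulty that the paper presumably confronts later on, when $\gamma$ is replaced by a rectifiable non-Lipschitz arc or when a Carath\'eodory domain is adjoined and a nontrivial interior must be handled, where one presumably must pass from the duality argument to a more constructive approach using $\bar\partial$ or direct Mergelyan-type constructions.
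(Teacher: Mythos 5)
Your duality argument is correct \emph{under the extra hypothesis} $\int_\gamma e^{-\varphi}\,ds<\infty$, and in that regime it is a clean alternative to what the paper proves separately as Theorem \ref{Weierstrassetensio} (which instead truncates, passes to simple functions and characteristic functions, and invokes Lavrent'ev). But that hypothesis is not part of the theorem, and declaring it ``implicitly required'' assumes away the entire difficulty. A subharmonic weight can have logarithmic poles \emph{on} $\gamma$: for $\varphi(z)=\log|z-\gamma(t_0)|$ with $\gamma(t_0)\in\gamma$ one gets $e^{-\varphi}=|z-\gamma(t_0)|^{-1}$, which is not integrable along the arc. Theorem \ref{expintreal} of the paper makes this precise: $e^{-\varphi}$ is locally integrable on $\gamma$ at $\gamma(t)$ if and only if the Lelong number $\nu(\varphi)(\gamma(t))$ is $<1$. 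When some Lelong number is $\ge 1$, the constant $1$ is not in $L^2(\gamma,\varphi)$, yet $L^2(\gamma,\varphi)$ is still a nontrivial Hilbert space and the theorem asserts density of those polynomials that do lie in it (this is spelled out in the remark following the paper's proof). Your argument collapses exactly there: $\|1\|_{L^2(\gamma,\varphi)}=\infty$, so the measure $\mu=\overline{g}\,e^{-\varphi}\,ds$ need not have finite total variation and cannot be paired against all of $C(\gamma)$; the Riesz--Markov step is unavailable. Also note that your lower bound $e^{-\varphi}\ge e^{-M}$ comes from $\varphi\le M$ (upper semicontinuity); it gives no control from above, which is where the problem lives.

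What is missing is the paper's reduction of the singular case. There are finitely many points $\gamma(t_i)$ with $\nu(\varphi)(\gamma(t_i))\ge 1$; one writes $\varphi=\psi+\log|Q|$ with $Q$ a polynomial vanishing at $\gamma(t_i)$ to order $[\nu(\varphi)(\gamma(t_i))]$ and $\nu(\psi)<1$ on $\gamma$. Then $f\in L^2(\gamma,\varphi)$ iff $f/\sqrt{Q}\in L^2(\gamma,\psi)$, the nonsingular case (your argument, or Theorem \ref{Weierstrassetensio}) applies to $f/\sqrt{Q}$, and the one remaining task --- showing $\sqrt{Q}$ itself can be approximated by genuine polynomials in $L^2(\gamma,\varphi)$ --- is handled by an explicit construction: a continuous modification $g$ of $1/\sqrt{Q}$ on small arcs around the zeros with $|\sqrt{Q}\,g|\le 1$, a polynomial $A$ close to $g$ in $L^2(\gamma,\psi)$, and the final approximant $P=QA$. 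Your proof needs this entire second half to establish the theorem as stated.
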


 Recall that a Carath\'{e}odory domain $\Omega$ is a simply-connected bounded planar domain whose boundary $\partial{\Omega}$ is also the boundary of an unbounded domain. 
 Combined with Theorem 1.3 from \cite{BFW2019}, it leads us to the following weighted $L^2$-version of the Mergelyan theorem:

\begin{theorem}\label{MergelyanC}
	Let $\Omega \subset \mathbb{C}$ be a Carath\'{e}odory domain and $\gamma \subset \mathbb{C}$ a Lipschitz graph with one endpoint $p$ in $\overline{\Omega}$, the rest of $\gamma$ be in the unbounded component of the complement of $\overline{\Omega}$. Assume that the boundary of $\Omega$ is $\mathcal{C}^2$ near $p$. Let $\varphi$ be a subharmonic function in a neighborhood of $\overline{\Omega} \cup \gamma$. Then polynomials are dense in $H^2(\Omega\cup\gamma, \varphi)$. 
\end{theorem}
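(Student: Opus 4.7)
The plan is to combine the two density results already in hand — Theorem 1.3 of \cite{BFW2019}, which handles $H^2(\Omega,\varphi)$ for a Carath\'eodory domain, and Theorem \ref{Weierstrass ab}, which handles $L^2(\gamma,\varphi)$ for a Lipschitz graph — by a Runge-type gluing argument that manages the fact that $\gamma$ meets $\overline{\Omega}$ only at the single point $p$.

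Fix $f\in H^2(\Omega\cup\gamma,\varphi)$ and $\varepsilon>0$. First, apply Theorem 1.3 of \cite{BFW2019} to $f|_{\Omega}\in H^2(\Omega,\varphi)$ to obtain a polynomial $P_{1}$ with $\|P_{1}-f\|_{L^2(\Omega,\varphi)}<\varepsilon$. The residual $g:=(f-P_{1})|_{\gamma}$ lies in $L^2(\gamma,\varphi)$; by absolute continuity of the integral, choose $\delta>0$ so small that $\|g\|_{L^2(\gamma\cap\overline{B(p,\delta)},\varphi)}<\varepsilon$, and use Theorem \ref{Weierstrass ab} to find a polynomial $G$ with $\|G-g\|_{L^2(\gamma,\varphi)}<\varepsilon$. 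The naive sum $P_{1}+G$ is already close to $f$ in $L^2(\gamma,\varphi)$, but $G$ may be large on $\Omega$, so one more modification is needed.

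The crucial step is to replace $G$ by $P_{2}:=G\cdot T$, where $T$ is a polynomial close to $1$ on the portion of $\gamma$ away from $p$ and close to $0$ on $\overline{\Omega}$ together with the portion of $\gamma$ near $p$. To construct $T$, set $L:=\overline{\Omega}\cup\overline{B(p,\delta)}$ (which contains $\overline{\Omega}$ and all of $\gamma\cap\overline{B(p,\delta)}$) and $\gamma^{\mathrm{out}}:=\gamma\setminus B(p,2\delta)$. The hypotheses that $\Omega$ is a Carath\'eodory domain and that $\partial\Omega$ is $\mathcal{C}^2$ near $p$ ensure that $L$ has connected complement, hence is polynomially convex; since $\gamma^{\mathrm{out}}$ is a compact Lipschitz arc disjoint from $L$ and arcs do not disconnect the plane, $L\cup\gamma^{\mathrm{out}}$ is also polynomially convex. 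Runge's theorem applied to the locally constant function equal to $0$ on $L$ and $1$ on $\gamma^{\mathrm{out}}$ then yields, for any $\eta>0$, a polynomial $T$ with $|T|<\eta$ on $L$ and $|T-1|<\eta$ on $\gamma^{\mathrm{out}}$. Choosing $\eta$ small compared to $\|G\|_{L^{\infty}(L\cup\gamma^{\mathrm{out}})}$ makes $P_{2}=GT$ uniformly small on $L$ and close to $G$ on $\gamma^{\mathrm{out}}$, which upgrades at once to weighted $L^2$-bounds because $\int_{L\cup\gamma^{\mathrm{out}}}e^{-\varphi}\,dm$ is finite. The narrow annular piece $\gamma\cap\{\delta<|z-p|<2\delta\}$ is absorbed by shrinking $\delta$ a little further, using once more the absolute continuity of the $L^2$-integral together with the continuity at $p$ of the now-fixed polynomial $T$. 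Setting $P:=P_{1}+P_{2}$ then produces the desired polynomial approximant: close to $f$ on $\Omega$ because $P_{1}$ is and $P_{2}$ is small there, close to $f$ on $\gamma^{\mathrm{out}}$ because $P_{2}\approx G\approx g=f-P_{1}$, and small along with $g$ on $\gamma\cap\overline{B(p,\delta)}$.

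The main obstacle in this programme is the topological/geometric step of verifying that $L=\overline{\Omega}\cup\overline{B(p,\delta)}$ has connected complement in $\mathbb{C}$ (so that Runge's theorem applies on $L\cup\gamma^{\mathrm{out}}$). The Carath\'eodory hypothesis makes $\mathbb{C}\setminus\overline{\Omega}$ connected, and the $\mathcal{C}^{2}$ regularity of $\partial\Omega$ at $p$ is what prevents a small ball centered at $p$ from pinching off a pocket of the complement — a local flattening of $\partial\Omega$ near $p$ makes this transparent. Once this geometric lemma is in place, the rest is a routine adaptation of the classical Runge/Mergelyan scheme to the weighted $L^{2}$ framework.
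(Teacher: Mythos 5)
Your scheme reproduces, in spirit, only Case~1 of the paper's proof (the case $\nu(\varphi)<1$ on $\gamma$ and $\nu(\varphi)<2$ on $\overline{\Omega}$), and even there the gluing step has a hole. The larger problem is that you never address the singularities of the weight, which are the main content of the theorem. If $\nu(\varphi)(\gamma(t_0))\geq 1$ at some point of $\gamma$, then by Theorem \ref{expintreal} $e^{-\varphi}$ is not integrable on $\gamma$ near $\gamma(t_0)$, so a polynomial belongs to $L^2(\gamma,\varphi)$ only if it vanishes there to sufficient order. Consequently your residual $g=(f-P_1)|_\gamma$ need not lie in $L^2(\gamma,\varphi)$ at all, the bound $\|GT\|_{L^2(\gamma,\varphi)}\lesssim \eta\sup|G|\,(\int_\gamma e^{-\varphi}ds)^{1/2}$ is vacuous because the integral is infinite, and your final approximant $P_1+GT$ is generically not in the space. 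The paper handles this by writing $\varphi=\psi+\log|Q|$ (resp.\ $\varphi=\psi+2\log|Q_1|$ when $\nu(\varphi)\geq 2$) with $Q$ vanishing at the singular points, and by forcing every approximant to carry the factor $Q$. The delicate subcase is when the junction point $p$ itself is singular: there one must make the $\Omega$-approximant vanish at a boundary point of $\Omega$ without losing the $L^2(\Omega,\varphi)$ estimate, and this is done in Theorem \ref{th:simplyconnectedpeak} via the peak factor $1-\bigl(\frac{p-q}{z-q}\bigr)^n$ built from an exterior tangent disc at $p$. That is the actual role of the $\mathcal{C}^2$ hypothesis near $p$; your attribution of it to the connectivity of $\mathbb{C}\setminus(\overline{\Omega}\cup\overline{B(p,\delta)})$ misses the point.

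There is also a gap in the gluing itself. Your polynomial $T$ is produced by Runge on the compact set $L\cup\gamma^{\mathrm{out}}$, so it is controlled only there; on the transition arc $\gamma\cap\{\delta<|z-p|<2\delta\}$ it can be arbitrarily large, and since $T$ is constructed \emph{after} $\delta$ is fixed, "shrinking $\delta$ a little further" cannot help: the bound $\sup_{\mathrm{annulus}}|T|$ depends on the same $\delta$ that determines the annulus, so the two requirements chase each other. You cannot repair this by adjoining the transition arc to $L$ or to $\gamma^{\mathrm{out}}$, because the two pieces then meet and the locally constant datum $0/1$ ceases to be holomorphic in a neighborhood. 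The paper sidesteps exactly this by gluing $P_1$ and $P_2$ with a genuinely continuous cutoff, $h=\chi P_1+(1-\chi)P_2$ (or $h=\chi\mathcal{P}_1+(1-\chi)P_2Q$ in the singular subcase), which is holomorphic on $\Omega$ and continuous on $\overline{\Omega}\cup\gamma$, and then invoking the classical Mergelyan theorem rather than Runge; the error on the transition region is controlled by choosing $\Omega_j$ so that $\int_{\gamma\cap\Omega_j}|f|^2e^{-\varphi}ds$ and $\int_{\gamma\cap\Omega_j}|P_1|^2e^{-\varphi}ds$ are small, an estimate that rests on Corollary \ref{cor real}. If you restrict to the nonsingular case and replace your Runge step by this cutoff-plus-Mergelyan argument, your outline becomes essentially the paper's Case~1; as written, it does not prove the theorem.
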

 Here, 
 \begin{eqnarray*}
 \begin{aligned} 
H^2(\Omega \cup \gamma, \varphi):=\displaystyle{\left\lbrace {f \ \ \text{is measurable on} \ \ \Omega \cup \gamma \ \text{with} \ f|_\Omega   \in\mathcal{O}(\Omega)} \right.} \\
	\left.{\ \text{and}  \ \int_\Omega\vert f(z)\vert^2e^{-\varphi(z)}d\lambda_z+ \int_\gamma \vert f\vert^2e^{-\varphi}ds <\infty }  
	\right\rbrace,
 \end{aligned}
\end{eqnarray*}
where $d\lambda_z$ is the Lebesgue measure on $\Omega$ and $ds$ is the arc length element.\\

 We can even generalize to 
	\begin{theorem}\label{MergelyanUC11}
	 We use the previous notations and assumptions. If $K_\ell$, $\ell=1,\cdots, N$ is either a Lipschitz graph or a bounded Carath\'{e}odory domain in $\mathbb{C}$ as in Theorem \ref{MergelyanC} such that $\mathbb{C}\setminus \left(\bigcup_{\ell=1}^N K_\ell\right)$ is connected, $\overline{K_i}$ and $\overline{K_j}$ have at most one common point $\lbrace{p_{ij}}\rbrace$ and $K_j$ is outside of the relatively compact connected component of $\overline{K_i}^c$ for each $i \neq j$. Let $\varphi$ be a subharmonic function in a neighborhood of $\bigcup_{\ell=1}^NK_\ell$. Then polynomials are dense in $H^2(\bigcup_{\ell=1}^NK_\ell, \varphi)$.
\end{theorem}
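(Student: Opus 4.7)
The plan is to induct on $N$. The base case $N=1$ is immediate from Theorem~\ref{Weierstrass ab} if $K_1$ is a Lipschitz graph, or from Theorem~\ref{MergelyanC} (with empty Lipschitz graph part) if $K_1$ is a Carath\'eodory domain. For the inductive step, I would first observe that the hypotheses force the incidence graph of the pieces (vertices $K_1,\dots,K_N$, with an edge joining $K_i$ and $K_j$ whenever $\overline{K_i}\cap\overline{K_j}\neq\emptyset$) to be a forest: any cycle would, in view of the no-nesting condition, enclose a bounded component of $\C\setminus\bigcup_\ell K_\ell$, contradicting the connectedness of the complement. Pick a leaf $K_N$; when it is isolated one combines the inductive approximation on $K'=\bigcup_{\ell<N}K_\ell$ with a direct application of Theorem~\ref{Weierstrass ab} or Theorem~\ref{MergelyanC} on $K_N$ via Runge on the disjoint union, so assume $K_N$ has a unique neighbor $K_{i_0}$ meeting it at $p:=p_{i_0,N}$. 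Write $K':=\bigcup_{\ell<N}K_\ell$ and $K'':=\bigcup_{\ell<N,\,\ell\neq i_0}K_\ell$; since $K_N$ is a leaf one has $\overline{K_N}\cap\overline{K''}=\emptyset$, and $K'$ still satisfies the hypotheses of the theorem.

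Given $f\in H^2(\bigcup_\ell K_\ell,\varphi)$ and $\e>0$, I would proceed in three steps. First, apply the inductive hypothesis to $K'$ to obtain a polynomial $P$ with $\|P-f\|_{L^2(K',\varphi)}<\e$. Second, apply Theorem~\ref{MergelyanC} (or Theorem~\ref{Weierstrass ab}, according to the types of $K_{i_0}$ and $K_N$) to the two-piece union $K_{i_0}\cup K_N$ to obtain a polynomial $\widetilde P$ with $\|\widetilde P-f\|_{L^2(K_{i_0}\cup K_N,\varphi)}<\e$. Third, fuse $P$ and $\widetilde P$ into a single polynomial: choose a smooth cutoff $\chi$ equal to $1$ near $K_N$ and $0$ near $K''$, and form $R:=(1-\chi)P+\chi\widetilde P$. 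On each of $K_N$, $K_{i_0}$, and $K''$ the function $R$ is close to $f$ in the weighted $L^2$-norm (by the first two steps, using that on the overlap $K_{i_0}$ both $P$ and $\widetilde P$ are close to $f$, so any convex combination is too). The $(0,1)$-form $\bar\partial R=(\bar\partial\chi)(\widetilde P-P)$ is supported on a thin transition region where $\widetilde P-P$ is controlled by Cauchy-type estimates from its weighted $L^2$-smallness on $K_{i_0}$; solving $\bar\partial v=\bar\partial R$ via H\"ormander's weighted $L^2$ theorem with weight $\varphi$ produces $v$ of small norm. The entire function $R-v$ is then close to $f$ in $L^2(\bigcup_\ell K_\ell,\varphi)$, and Runge's theorem (applicable because $\C\setminus\bigcup_\ell K_\ell$ is connected) replaces $R-v$ by a polynomial at negligible further cost.

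The main obstacle is the fusion step: one must pass from the weighted $L^2$-smallness of $\widetilde P-P$ on $K_{i_0}$ to a pointwise bound on $\mathrm{supp}(\bar\partial\chi)$ good enough that H\"ormander's estimate produces $v$ which is small in both the planar $L^2$-norm (on Carath\'eodory-domain pieces) and the arc-length $L^2$-norm (on Lipschitz-graph pieces). When $K_{i_0}$ is a Carath\'eodory domain, interior Cauchy estimates suffice and $\mathrm{supp}(\bar\partial\chi)$ can be placed inside $\mathring{K_{i_0}}$; when $K_{i_0}$ is a Lipschitz graph the transition region must instead lie in a thin planar tube around $K_{i_0}\setminus\{p\}$, with the subharmonic weight $\varphi$ carried carefully through the $\bar\partial$-estimate. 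The remaining degenerate combinations (two Carath\'eodory domains touching at a boundary point, or two Lipschitz graphs sharing a common endpoint) that are not literally covered by Theorem~\ref{MergelyanC} reduce to direct variants proved by the same techniques as Theorems~\ref{Weierstrass ab} and \ref{MergelyanC}.
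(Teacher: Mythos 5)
Your induction scheme is reasonable, but the fusion step --- the part you yourself flag as the main obstacle --- contains a genuine gap, and it is precisely the step the paper organizes its argument to avoid. Two problems. First, when the shared piece $K_{i_0}$ is a Lipschitz graph, there is no passage from weighted $L^2(ds)$-smallness of $\widetilde P-P$ on the arc to a pointwise bound on a planar tube around the arc: the approximating polynomials carry no degree bound, and a polynomial that is small in $L^2$ along an arc can be arbitrarily large at any fixed positive distance from it, so $\bar\partial R=(\bar\partial\chi)(\widetilde P-P)$ cannot be estimated this way. Second, even when interior Cauchy estimates are available (transition region inside $\mathring{K}_{i_0}$ for a Carath\'eodory domain), H\"ormander's theorem controls the correction $v$ only in $L^2(e^{-\varphi}\,d\lambda)$, whereas the norm of $H^2(\bigcup_\ell K_\ell,\varphi)$ involves arc-length integrals $\int|\cdot|^2e^{-\varphi}\,ds$ over the graph pieces; an $L^2(d\lambda)$ bound on $v$ gives no control of its restriction to a one-dimensional set, and the arcs necessarily pass through the transition region near the attachment point $p$, where $v$ is not holomorphic and interior estimates are unavailable.

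The paper never writes out a proof of Theorem \ref{MergelyanUC11}; it is presented as an iteration of the proof of Theorem \ref{MergelyanC}, whose gluing device removes the need for any $\bar\partial$-correction. There the cutoff $\chi$ is taken $\equiv 1$ on a full neighborhood $\Omega_{j+1}$ of $\overline\Omega$ and $\equiv 0$ outside a slightly larger $\Omega_j$, so that $h=\chi P_1+(1-\chi)P_2$ is genuinely holomorphic on the interior of the compact set and merely continuous on the arcs; the classical Mergelyan theorem then replaces $h$ by a single polynomial uniformly on the whole compact set, and the only price is the overlap term on $\gamma\cap\Omega_j$, made negligible by shrinking $\Omega_j$ so that $\gamma\cap\Omega_j$ collapses to the attachment point and the integrals in (\ref{caratheodorysmall}) become small (using Corollary \ref{cor real}); the Lelong-number Cases 1--3 are handled by factoring out a polynomial $Q$ as in Section 4. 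If you replace your H\"ormander step by this ``glue continuously on the arcs, stay holomorphic on the interiors, then apply classical Mergelyan'' device, your leaf-by-leaf induction goes through. One further caution: your claim that the incidence graph is a forest does not follow from connectedness of $\C\setminus\bigcup_\ell K_\ell$ (three pairwise tangent domains form a cycle yet have connected open-set complement, since the boundaries lie in the complement); what the uniform Mergelyan step actually requires is connectedness of the complement of the closed union, so this point needs to be argued, not just asserted.
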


\begin{center}
	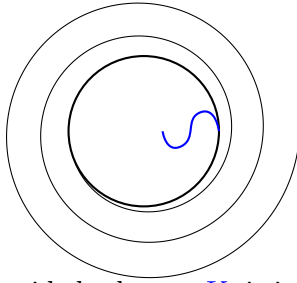
\begin{figure}[H]
		\begin{tikzpicture}
		\draw  (0,0) 
		\foreach \i [evaluate={\r=(\i/2000)^2;}] in {1000,...,2880}{ -- (\i:\r)}; 
		
		\path [fill=white] (0,0) circle (1);
		\draw[thick, black] (0,0) circle (1);
		\draw[thick, blue] plot [smooth,tension=1.5] coordinates{ (1, 0) (0.75,0.25) (0.5, -0.2) (0.25,0)};
		\end{tikzpicture}
		\caption{Case avoided: the arc $\color{blue}{K_j}$ is in the bounded component of $\overline{K_i}^c$ where $K_i$ is the outer snake}
	\end{figure}
\end{center}

 Let $\Gamma$ be the graph of a locally Lipschitz function over the real axis in $\C$. We may assume $\Gamma = \{(t, \phi(t)) \}$ with $\phi: \R \rightarrow \R$ a locally Lipschitz continuous function. 
 Thanks to Theorem \ref{Weierstrass ab}, we prove the following weighted $L^2$-version of the Carleman theorem:
 
 \begin{theorem} \label{L2Carleman}
 	Let $\Gamma$ be the graph of a locally Lipschitz function over the real axis in $\C$ and $\varphi$ a subharmonic function in a neighborhood of $\Gamma$. Denote by $\Gamma_n$, $n\in \mathbb{Z}$ the part of the graph $\Gamma$ over the interval $[n,n+1]$. Then for any $f\in L^2(\gamma, \varphi)$ and for any positive numbers $\varepsilon_n$, there exists an entire function $F$, so that for each $n\in \mathbb{Z}$,	
 	$$
 	\int _{\Gamma_n} |F -f|^2 e^{-\varphi} d s < \varepsilon_n.
 	$$ 
 \end{theorem}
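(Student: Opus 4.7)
The plan is to build $F$ as the uniform-on-compacts limit of a sequence of polynomials $P_n$ constructed inductively: at stage $n+1$ I patch the already-built $P_n$ on an inner $2$-dimensional region with the target $f$ on the two newly added outer arcs, then approximate the patchwork by a single polynomial via Theorem \ref{MergelyanUC11}. Denote by $\Gamma^{(n)}$ the part of $\Gamma$ over $[-n,n]$, and write $p_n^\pm:=(\pm n,\phi(\pm n))$ and $\gamma_n^\pm$ for the arcs of $\Gamma$ over $[\pm n,\pm(n+1)]$. The first preparatory step is to choose a nested family of Carath\'eodory domains $\Omega_n$ with $\overline{\Omega_n}\subset\Omega_{n+1}$ and $\bigcup_n\Omega_n=\C$, such that $\Gamma^{(n)}\subset\Omega_n$, $\partial\Omega_n$ is of class $\mathcal{C}^2$ at $p_n^\pm$, and $\gamma_n^\pm\cap\overline{\Omega_n}=\{p_n^\pm\}$. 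Simultaneously I extend $\varphi$ from its given neighborhood of $\Gamma$ to a subharmonic function (still denoted $\varphi$) on a neighborhood of each $\overline{\Omega_n}$ that still agrees with the original $\varphi$ on $\Gamma$; the compact set $K_n:=\overline{\Omega_n}\cup\gamma_n^+\cup\gamma_n^-$ then satisfies the geometric hypotheses of Theorem \ref{MergelyanUC11}.

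For the induction, I set $P_0\equiv 0$; given a polynomial $P_n$, I define a patchwork target $g_{n+1}$ on $K_n$ by $g_{n+1}:=P_n$ on $\overline{\Omega_n}$ and $g_{n+1}:=f$ on $\gamma_n^\pm$. Since $P_n\in\mathcal{O}(\Omega_n)$ and $f|_{\gamma_n^\pm}\in L^2(\gamma_n^\pm,\varphi)$, we have $g_{n+1}\in H^2(K_n,\varphi)$, so Theorem \ref{MergelyanUC11} produces a polynomial $P_{n+1}$ with
\[\int_{\Omega_n}|P_{n+1}-P_n|^2e^{-\varphi}\,d\lambda\;+\;\int_{\gamma_n^+\cup\gamma_n^-}|P_{n+1}-f|^2e^{-\varphi}\,ds\;<\;\delta_{n+1},\]
for a $\delta_{n+1}>0$ to be specified below.

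To see that $F:=\lim_n P_n$ is entire, I apply the sub-mean-value inequality to the subharmonic function $|P_{n+1}-P_n|^2$: the $2$-dimensional $L^2$-bound above converts to a pointwise estimate $\sup_K|P_{n+1}-P_n|\le C_K\sqrt{\delta_{n+1}}$ on any compact $K\subset\Omega_n$, with $C_K$ depending on $K$ and on $\sup_K\varphi$. Choosing $\delta_n$ so that $\sum_n\sqrt{\delta_n}<\infty$ (and small enough to keep the $L^2(\Gamma_k,\varphi)$-errors below $\varepsilon_k/2$ for all subsequent stages) makes $P_n$ converge uniformly on every compact subset of $\C$, so $F$ is entire. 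The same sup-norm control transfers to the one-dimensional weighted norm on each bounded $\Gamma_k$ (since $e^{-\varphi}\in L^1(\Gamma_k)$), and telescoping with the direct arc bound from the inductive step yields $\int_{\Gamma_k}|F-f|^2e^{-\varphi}\,ds<\varepsilon_k$ for every $k\in\Z$.

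The hardest part is the preparatory geometry and analysis: I must construct the $\Omega_n$'s so as to simultaneously satisfy the $\mathcal{C}^2$-smoothness requirement of Theorem \ref{MergelyanUC11} at the attachment points $p_n^\pm$ and to exhaust $\C$, and I must extend $\varphi$ subharmonically to a neighborhood of each $\overline{\Omega_n}$ without altering the weight on $\Gamma$ (so that the resulting weighted $L^2$-bounds genuinely control the original $L^2(\Gamma,\varphi)$-norm). Once these two preparations are in place, the inductive construction of the $P_n$ and the bookkeeping of the $\delta_n$'s are routine.
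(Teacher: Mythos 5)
Your strategy --- exhaust $\C$ by Carath\'eodory ``tubes'' $\Omega_n$ around $\Gamma^{(n)}$, patch the previous polynomial with $f$ on the two new arcs, approximate the patchwork via Theorem \ref{MergelyanUC11}, and pass to a locally uniform limit --- is genuinely different from the paper's. The paper never leaves the one-dimensional setting: it approximates $f$ locally by \emph{continuous} functions in the weighted norm (via Theorem \ref{Weierstrassetensio}), glues them with a partition of unity on $\Gamma$, and then invokes the classical Carleman--Alexander theorem (Theorem \ref{th:Carleman, Alexander}) to produce the entire function in one stroke; the singular case is handled by factoring $\varphi=\psi+\log|Q|$ with $Q$ an entire Weierstrass product vanishing at the on-graph points with $\nu(\varphi)\ge 1$ and approximating $f/\sqrt{Q}$ and $\sqrt{Q}$ separately. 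Your route is a reasonable way to prove Carleman-type results from Mergelyan-type ones, but as written it has concrete gaps that the paper's route is specifically designed to avoid.

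First, your transfer of the sup-norm control of $P_{m+1}-P_m$ on $\Gamma_k$ to the weighted norm rests on the claim ``$e^{-\varphi}\in L^1(\Gamma_k)$''. By Theorem \ref{expintreal} this is \emph{false} precisely when $\nu(\varphi)\ge 1$ at some point of $\Gamma_k$, so your telescoping argument collapses in exactly the case that constitutes the bulk of the paper's proof (its Case 2); to repair it you would need every difference $P_{m+1}-P_m$ to vanish to order $[\nu(\varphi)(\Gamma(t_j))]$ at each bad point, which your construction does not provide and which is why the paper introduces the global factor $Q$. Second, the membership $g_{n+1}\in H^2(K_n,\varphi)$ requires $\int_{\Omega_n}|P_n|^2e^{-\varphi}\,d\lambda<\infty$; this fails if $\nu(\varphi)\ge 2$ at a point of $\Omega_n\setminus\overline{\Omega_{n-1}}$ (possibly off $\Gamma$), where the previously chosen $P_n$ was never constrained to vanish, so the induction can break; handling this needs a second global factorization $\varphi=\tilde\psi+2\log|Q_1|$ with $Q_1$ entire, in the spirit of Case 3 of the proof of Theorem \ref{MergelyanC} but with a Weierstrass product. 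Third, a subharmonic extension of $\varphi$ to neighborhoods of the $\overline{\Omega_n}$ that ``still agrees with the original $\varphi$ on $\Gamma$'' is asserted but not available in general; what one actually gets from Lemmas \ref{le:measurecomsubharmonic} and \ref{le:decomposition} is a single global subharmonic $\psi$ with $\varphi-\psi$ harmonic, hence bounded on each compact piece of a neighborhood of $\Gamma$, so the norms are only equivalent with $k$-dependent constants --- sufficient for the theorem, but it must be said, and it changes nothing about the two singularity problems above. Until those are addressed, the proposal only proves the theorem under the additional hypothesis $\nu(\varphi)<1$ on $\Gamma$ and $\nu(\varphi)<2$ on the chosen tubes.
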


The paper is organized as follows: In Section 2, we give a necessary and sufficient condition in terms of Lelong number for the exponential $e^{-\varphi}$ to be integrable on an arc in $\mathbb{C}$. In Section 3, we prove Theorem \ref{Weierstrass ab}. In Section 4, we prove Theorem \ref{MergelyanC}. In Section 5, we prove Theorem \ref{L2Carleman}.
In the last section, we give an example which shows that there are no non-zero polynomials in $L^2(\gamma, \varphi)$ for some rectifiable non-Lipschitz arc $\gamma$ and some subharmonic function $\varphi$.
\\

\section{``Exponential integrability" on arcs in $\C$}

\indent In the following, we assume that $\varphi$ is subharmonic on $\mathbb{C}$, even though it is enough to assume it to be subharmonic in a neighborhood of the given subset. This fact relies on the following lemmas.\\

\begin{lemma} \label{le:measurecomsubharmonic}
	Let $\mu \geq 0$ be a measure on $\C$ with finite mass on each compact set. Then there exists a subharmonic function $\varphi$ on $\C$ so that $\Delta \varphi = \mu$. 
\end{lemma}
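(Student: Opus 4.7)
My plan is to construct $\varphi$ as a (regularized) logarithmic potential of $\mu$. The naive candidate $\tfrac{1}{2\pi}\int_{\C}\log|z-w|\,d\mu(w)$ need not converge, since $\mu$ may have infinite total mass, so I would use a Weierstrass-style truncation of the kernel on the far pieces of $\mu$.

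First I would decompose $\mu=\mu_0+\sum_{n\geq 1}\mu_n$, where $\mu_0:=\mu|_{\overline{\D}}$ and $\mu_n:=\mu|_{A_n}$ with $A_n:=\{n\leq|w|<n+1\}$. Each $\mu_n$ has finite mass, so $\varphi_0(z):=\tfrac{1}{2\pi}\int\log|z-w|\,d\mu_0(w)$ is a well-defined subharmonic function on $\C$ with $\Delta\varphi_0=\mu_0$ (since $\Delta\log|z|=2\pi\delta_0$ distributionally). For $n\geq 1$, I use the expansion
$$
\log|z-w|=\log|w|-\Re\sum_{k=1}^{\infty}\frac{1}{k}\Bigl(\frac{z}{w}\Bigr)^k,\qquad |z|<|w|,
$$
and define, for an integer $N_n$ to be chosen,
$$
\varphi_n(z):=\frac{1}{2\pi}\int_{A_n}\!\Bigl[\log|z-w|-\log|w|+\Re\sum_{k=1}^{N_n}\frac{1}{k}\Bigl(\frac{z}{w}\Bigr)^k\Bigr]d\mu_n(w).
$$
The subtracted quantity is harmonic in $z$ (it is the real part of a polynomial plus a $z$-independent term), so $\varphi_n$ is still subharmonic on $\C$ with $\Delta\varphi_n=\mu_n$.

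For $|z|\le n/2$ and $w\in A_n$ we have $|z/w|\le 1/2$, whence the standard geometric estimate gives $\bigl|\log|z-w|-\log|w|+\Re\sum_{k=1}^{N_n}\tfrac{1}{k}(z/w)^k\bigr|\le 2^{-N_n}$. I then choose $N_n$ so large that $\tfrac{\mu(A_n)}{2\pi}\,2^{-N_n}\le 2^{-n}$, giving the uniform bound $|\varphi_n(z)|\le 2^{-n}$ on $|z|\le n/2$. Setting $\varphi:=\varphi_0+\sum_{n\geq 1}\varphi_n$, on any compact set $K\subset\C$ only finitely many summands fail this small-tail estimate, so the series converges locally uniformly. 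As a locally uniform limit of subharmonic functions, $\varphi$ is subharmonic, and $\Delta\varphi=\sum_n\Delta\varphi_n=\mu$ in the sense of distributions (the interchange of sum and integral against any test function $\psi\in C_c^\infty(\C)$ is legitimate because $\psi$ has compact support, so only finitely many terms contribute nontrivially and the remaining ones are summable by the $2^{-n}$ bound).

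The only real obstacle is the convergence of the potential for a measure of possibly infinite total mass; the Weierstrass-type truncation above handles it in the standard way, while keeping each summand subharmonic and preserving the Laplacian.
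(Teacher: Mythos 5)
Your proof is correct, and it follows the same overall strategy as the paper: decompose $\mu$ into pieces supported on annuli, take the logarithmic potential of each piece, and subtract a harmonic polynomial from each term so that the tail of the series is uniformly small on compact sets, whence the sum is subharmonic with the right Riesz measure. The one genuine difference is how the correcting harmonic polynomial is produced. The paper observes that $\varphi_n(z)=\int\log|z-\zeta|\,d\mu_n(\zeta)$ is harmonic on the disc $\{|z|<n-1\}$, writes it there as $\Re h_n$ for a holomorphic $h_n$, and invokes the classical Mergelyan theorem to approximate $h_n$ by a polynomial $P_n$ on $\{|z|\le n-2\}$; you instead expand the kernel itself, $\log|z-w|=\log|w|-\Re\sum_{k\ge 1}\frac{1}{k}(z/w)^k$ for $|z|<|w|$, and truncate, which yields the correction explicitly together with a quantitative $2^{-N_n}$ error bound. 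Your route is more elementary and self-contained (the paper's appeal to Mergelyan is overkill here --- since $h_n$ is holomorphic on a disc, a Taylor truncation already suffices, which is in effect what you do at the level of the kernel), and your normalization $\frac{1}{2\pi}$ actually gives $\Delta\varphi=\mu$ on the nose, where the paper silently drops the factor $2\pi$. Two cosmetic points: your $\mu_0=\mu|_{\overline{\D}}$ and $\mu_1=\mu|_{\{1\le|w|<2\}}$ double-count the circle $|w|=1$ (use $\{|w|<1\}$ for $\mu_0$), and in the final distributional identity all terms contribute when paired with a test function --- the interchange is justified by the locally uniform convergence of $\sum\varphi_n$ on the support of the test function, not because all but finitely many terms vanish; but neither affects the argument.
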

\begin{proof}
	Set 
	$$
	\mu = \sum \limits_{n=1}^{\infty} \mu_n,  \ \ \mu _n = \mu_{\mid_{\{n-1 \leq |z| < n\} }}.
	$$
	 Let $\varphi_n (z)= \int \log |z-\zeta| d \mu_n(\zeta)$. Then $\varphi_n(z)$ is harmonic on $\{z: |z| < n-1\}$. Thus there exists a holomorphic function $h_n(z)$ on $\{z: |z| < n-1\}$ so that $\varphi_n(z) = \Re h_n(z)$. By using the classical Mergelyan theorem to $h_n(z)$ on $\{z: |z| \leq n-2\}$ there exists a polynomial $P_n(z)$ so that 
	 $$
	 |h_n(z)-  P_n(z)| < \frac{1}{2^n} ,  \ \ \text{on}\quad  \lbrace{  |z| \leq n-2}\rbrace.
	 $$
	 Then $\varphi (z)= \sum \limits_{n=1}^{\infty} \Re (  h_n(z) - P_n(z))$ is subharmonic on $\C$ and $\Delta \varphi = \mu$.
\end{proof}
\begin{lemma} \label{le:decomposition}
	Let $U \subset  \subset  V$ be two open sets and $\varphi$ a subharmonic function on $V$. Then there exists a subharmonic function $\psi$ on $\C$ so that $\varphi  = \psi + h$ on $U$, where $h$ is harmonic on $U$. 
\end{lemma}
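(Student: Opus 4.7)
The plan is to use Riesz decomposition together with the preceding Lemma \ref{le:measurecomsubharmonic}. Since $\varphi$ is subharmonic on $V$, the distributional Laplacian $\mu := \Delta \varphi$ is a positive Borel measure on $V$, locally finite there. Pick an intermediate open set $W$ with $U \Subset W \Subset V$, and define a measure $\widetilde{\mu}$ on $\C$ by
$$
\widetilde{\mu}(A) = \mu(A \cap W)
$$
for every Borel set $A \subset \C$. Because $\overline{W}$ is a compact subset of $V$, the measure $\widetilde{\mu}$ is in fact globally finite on $\C$, hence certainly satisfies the hypothesis of Lemma \ref{le:measurecomsubharmonic}.

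Applying that lemma, I obtain a subharmonic function $\psi$ on $\C$ with $\Delta \psi = \widetilde{\mu}$ in the sense of distributions. Now on the open set $U$, we have $\widetilde{\mu} = \mu$ (since $U \subset W$), so
$$
\Delta(\varphi - \psi) = \mu - \widetilde{\mu} = 0 \quad \text{on } U.
$$
Setting $h := \varphi - \psi$, Weyl's lemma (a distribution annihilated by $\Delta$ is a harmonic function) gives that $h$ is harmonic on $U$, and by construction $\varphi = \psi + h$ on $U$, as desired.

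The only mild subtlety is to make sure the construction of $\widetilde{\mu}$ gives a measure locally finite on all of $\C$, which is automatic because its total mass equals $\mu(W) \leq \mu(\overline{W}) < \infty$. Everything else is a direct consequence of the previous lemma and the standard distributional characterization of (sub)harmonicity.
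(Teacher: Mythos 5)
Your proof is correct and follows essentially the same route as the paper: truncate the Riesz measure $\Delta\varphi$ near $U$ (the paper multiplies by a smooth cutoff $\chi$ where you restrict sharply to an intermediate set $W$, an immaterial difference), invoke Lemma \ref{le:measurecomsubharmonic} to produce a global subharmonic $\psi$ with that Laplacian, and conclude via Weyl's lemma that $\varphi-\psi$ is harmonic on $U$. No gaps.
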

\begin{proof}
	 Choose a smooth cut off function $\chi: \C \rightarrow [0,1]$ so that $\chi \equiv 1$ on a neighborhood of $\overline{U}$ and $\rm{supp}\chi \subset V$. 
    Then	$\mu:= (\Delta\varphi) \cdot \chi$ is a positive measure with finite mass on each compact set in $\mathbb C$. By Lemma \ref{le:measurecomsubharmonic} there is a globally defined subharmonic function $\psi$ such that $\Delta\psi=\mu.$ But then $\varphi=\psi+h$ on $U$ for some harmonic function $h$.
	\end{proof}

Since $h$ is uniformly bounded on $U$, as a direct consequence, we get 
\begin{lemma}
	Let $E\subset \subset U$ and $\varphi, \psi$ be as in Lemma \ref{le:decomposition}. Then the Hilbert spaces $L^2(E, \varphi) = L^2(E, \psi )$ and the norms are ``equivalent", i.e., there exist positive constants $C_1, C_2$ so that 
	
	\begin{eqnarray*}
		C_1 \|f \|_{L^2(E, \varphi)} \leq \| f \|_{ L^2(E, \psi)} \leq C_2 \|f \| _{L^2(E, \varphi)}.
	\end{eqnarray*}
	
\end{lemma}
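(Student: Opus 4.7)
The plan is to observe that since $h$ is harmonic on $U$, it is in particular continuous on $U$, and then exploit the compact containment $E\subset\subset U$ to bound $h$ pointwise on $E$.

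First I would note that $\overline{E}$ is compact and contained in the open set $U$ on which $h$ is defined and harmonic (hence continuous). Therefore $h$ attains a finite minimum $m$ and maximum $M$ on $\overline{E}$, so
\[
m\le h(z)\le M\qquad\text{for all }z\in E.
\]
Next, using the decomposition $\varphi=\psi+h$ from Lemma \ref{le:decomposition}, I would rewrite the weight as $e^{-\varphi}=e^{-h}e^{-\psi}$ and deduce the pointwise sandwich
\[
e^{-M}\,e^{-\psi(z)}\;\le\; e^{-\varphi(z)}\;\le\; e^{-m}\,e^{-\psi(z)}
\]
on $E$. Multiplying by $|f|^2$ and integrating against $dm$ yields
\[
e^{-M}\,\|f\|_{L^2(E,\psi)}^2 \;\le\; \|f\|_{L^2(E,\varphi)}^2 \;\le\; e^{-m}\,\|f\|_{L^2(E,\psi)}^2,
\]
which gives the claimed equivalence with, for instance, $C_1=e^{m/2}$ and $C_2=e^{M/2}$. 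The equality of the two spaces $L^2(E,\varphi)=L^2(E,\psi)$ as sets of measurable functions is then an immediate corollary: a measurable $f$ has finite $\varphi$-norm iff it has finite $\psi$-norm, and the same holomorphy condition defines $H^2$ in either case.

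There is no real obstacle here; the only ``content'' is the harmonicity (hence continuity) of $h$ combined with the compact containment $E\subset\subset U$, and the rest is the elementary observation that a bounded multiplicative perturbation of the weight produces an equivalent $L^2$-norm.
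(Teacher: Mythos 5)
Your proof is correct and is exactly the argument the paper intends: the paper dispatches this lemma in one line by noting that $h$ is bounded on $E$ (since $h$ is harmonic, hence continuous, on $U$ and $E\subset\subset U$), so the weights $e^{-\varphi}$ and $e^{-\psi}$ differ by a multiplicative factor bounded above and below, giving equivalent norms. Your write-up simply makes the constants explicit.
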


We  prove similar statements to those in Section 2 of \cite{Fornaessandwu2017} but on an arc $\gamma$. Those results will allow us to prove at the end of the section the local integrability of the exponential $e^{-\varphi}$ at a point $x\in\gamma$ 
if and only if the Lelong number $\nu(\varphi)(x)$ is strictly less than 1.\\

\begin{definition}
	A function $f: E\rightarrow \R $, $E \subset \R$, is said to be $L-$Lipschitz, $L\geq 0$, if 
	$$
	|f(t_1) - f(t_2)| \leq L|t_1 -t_2|
	$$ 
	for every pair of points $(t_1,t_2) \in E\times E$. We say that a function is Lipschitz if it is $L-$Lipschitz for some $L$.
\end{definition}

\begin{lemma}[Chapter 5 of \cite{Royden1968}]
	If $f:[a,b] \rightarrow \R$ is a Lipschitz function, then $f$ is differentiable at almost every point in $[a,b]$ and 
	$$
	f(b)-f(a) = \int_a^b f'(t)dt.
	$$
\end{lemma}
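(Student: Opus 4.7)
The plan is to derive the statement from two classical results of one-variable real analysis: Lebesgue's differentiation theorem for monotone functions, and the characterization of absolutely continuous functions as those for which the fundamental theorem of calculus holds. Both are proved in the same chapter of Royden that is cited, so the lemma is essentially a specialization of these black boxes to the Lipschitz class.

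First I would observe that an $L$-Lipschitz function $f$ on $[a,b]$ has bounded variation: for any partition $a = t_0 < t_1 < \cdots < t_n = b$,
$$
\sum_{i=1}^n |f(t_i) - f(t_{i-1})| \leq L \sum_{i=1}^n (t_i - t_{i-1}) = L(b-a).
$$
By the Jordan decomposition one can write $f = g_1 - g_2$ with $g_1, g_2$ monotone nondecreasing. Lebesgue's differentiation theorem for monotone functions then ensures that $g_1$ and $g_2$, and hence $f$, are differentiable almost everywhere on $[a,b]$. The derivative $f'$ is measurable and, where it exists, bounded by $L$ in absolute value, so that $\int_a^b |f'(t)|\, dt \leq L(b-a) < \infty$.

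Next I would check that $f$ is absolutely continuous. Given $\varepsilon > 0$, take $\delta := \varepsilon / L$ (any $\delta$ if $L=0$); then for any finite collection of pairwise disjoint subintervals $(a_i, b_i) \subset [a,b]$ with $\sum_i (b_i - a_i) < \delta$, the Lipschitz estimate yields
$$
\sum_i |f(b_i) - f(a_i)| \leq L \sum_i (b_i - a_i) < \varepsilon.
$$
The classical theorem that every absolutely continuous function $f$ on $[a,b]$ satisfies $f(b) - f(a) = \int_a^b f'(t)\, dt$ then gives the desired identity.

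The genuine analytic content is entirely repackaged inside Lebesgue's differentiation theorem for monotone functions, whose proof goes through a Vitali or rising-sun covering argument; I would cite this rather than reprove it, consistently with the Royden reference. Everything else is a direct unwinding of the definitions of bounded variation and absolute continuity in the presence of the Lipschitz bound, so there is no real obstacle beyond invoking the correct standard theorem.
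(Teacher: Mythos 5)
Your argument is correct and is exactly the standard route the paper points to by citing Chapter 5 of Royden: Lipschitz implies bounded variation, hence differentiability a.e.\ via Jordan decomposition and Lebesgue's theorem for monotone functions, and Lipschitz implies absolute continuity, which yields the fundamental theorem of calculus identity. The paper gives no independent proof, so your write-up is a faithful unwinding of the cited reference with no gaps.
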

Let $\gamma$ be the graph of a Lipschitz function $y(t): [a,b] \rightarrow \R$. In the following, we say that $\gamma$ is a \textit{Lipschitz graph} if $\gamma$ is the graph of an L-Lipschitz function $y$ on $[a,b]$, $a,b<\infty$. 
Locally Lipschitz graph means that for each point $p\in \gamma$, there exists a neighborhood $U$ where the graph is Lipschitz, up to a rotation of $\gamma$.
We denote by $|\gamma|$ the arc length of $\gamma$, defined as follows 
$$
|\gamma|: = \int_ \gamma ds = \int _a^b |\gamma'(t)|dt,
$$
where $|\gamma'(t)| = \sqrt{ 1+(y'(t))^2}$.

Let $z_0 = (t_0, y_0)\in \C$ and $0 < \beta < 1$.
\begin{lemma} \label{le:estimate for lip}
Let $\gamma$ be a graph, then $|\gamma(t) - z_0| \geq |t-t_0|$ on $[a,b]$. 
\end{lemma}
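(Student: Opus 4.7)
The statement is essentially the elementary fact that Euclidean distance dominates the one-dimensional projection, so the proof is a direct computation rather than a multi-step argument.

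My plan is to write $\gamma(t) = t + i\, y(t)$ (viewing $\mathbb{C}$ as $\mathbb{R}^2$ with real part the horizontal coordinate) and $z_0 = t_0 + i\, y_0$. Then
\[
|\gamma(t) - z_0|^2 \;=\; (t - t_0)^2 + \bigl(y(t) - y_0\bigr)^2 \;\geq\; (t - t_0)^2,
\]
and taking the (nonnegative) square root yields $|\gamma(t) - z_0| \geq |t - t_0|$ for every $t \in [a,b]$.

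There is no real obstacle here: the inequality does not use the Lipschitz property of $y$, only the fact that $\gamma$ is a graph over the real axis so that the real part of $\gamma(t)$ equals $t$. The lemma should be read as recording a convenient lower bound that will be used repeatedly in the subsequent Lelong-number estimates. I would therefore keep the proof to just the two lines above.
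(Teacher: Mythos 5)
Your proof is correct and is exactly the elementary projection argument the paper has in mind (the paper states this lemma without proof, treating it as immediate). Your observation that only the graph structure, not the Lipschitz constant, is used is also accurate.
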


\begin{lemma}\label{lemmaestimateofgamma}
	Let $\gamma$ be a Lipschitz graph, then
	$$
	\int_a^b \frac{1}{|\gamma(t) - z_0|^\beta}|\gamma'(t)|dt \leq \rm{Const}_{L,a,b,\beta}.
	$$
\end{lemma}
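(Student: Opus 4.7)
The plan is to reduce the line integral on $\gamma$ to an elementary one-dimensional integral in the parameter $t$, and then invoke the fact that $1/|t-t_0|^\beta$ is integrable near $t_0$ because $\beta < 1$.

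First, I would use Lemma \ref{le:estimate for lip} directly: since $\gamma$ is a graph, we have $|\gamma(t)-z_0| \geq |t-t_0|$ for every $t \in [a,b]$, and therefore
\begin{equation*}
\frac{1}{|\gamma(t)-z_0|^\beta} \leq \frac{1}{|t-t_0|^\beta}.
\end{equation*}
Next, I would control the speed $|\gamma'(t)| = \sqrt{1+(y'(t))^2}$ from above using the Lipschitz condition on $y$: by the Lipschitz lemma from \cite{Royden1968} quoted above, $y$ is differentiable almost everywhere on $[a,b]$ with $|y'(t)| \leq L$ a.e., hence $|\gamma'(t)| \leq \sqrt{1+L^2}$ almost everywhere on $[a,b]$.

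Combining these two bounds yields
\begin{equation*}
\int_a^b \frac{|\gamma'(t)|}{|\gamma(t)-z_0|^\beta}\,dt \;\leq\; \sqrt{1+L^2}\,\int_a^b \frac{dt}{|t-t_0|^\beta}.
\end{equation*}
It then remains to bound the scalar integral on the right uniformly in $t_0 \in \C$. If $t_0 \notin [a,b]$, the integrand is bounded on $[a,b]$ and the integral is at most $(b-a)\cdot(\mathrm{dist}(t_0,[a,b]))^{-\beta}$, which is itself bounded in terms of $a,b,\beta$ (the worst case is $t_0$ near an endpoint, handled by the next case by continuity). If $t_0 \in [a,b]$, I would split the integral at $t_0$ and change variables $s = |t-t_0|$ in each piece to get
\begin{equation*}
\int_a^b \frac{dt}{|t-t_0|^\beta} \;\leq\; 2\int_0^{b-a} s^{-\beta}\,ds \;=\; \frac{2(b-a)^{1-\beta}}{1-\beta},
\end{equation*}
which is finite precisely because $\beta < 1$. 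Putting everything together produces the required constant $\mathrm{Const}_{L,a,b,\beta} = \sqrt{1+L^2}\cdot 2(b-a)^{1-\beta}/(1-\beta)$.

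There is no real obstacle: the proof is almost mechanical once one recognizes that the only role of the Lipschitz hypothesis is to give an $L^\infty$ bound on $|\gamma'|$, while the graph hypothesis (via Lemma \ref{le:estimate for lip}) transfers the singularity of the weight onto the parameter interval where it becomes the familiar integrable singularity $s^{-\beta}$, $\beta \in (0,1)$. The only mild subtlety is handling the cases $t_0 \in [a,b]$ and $t_0 \notin [a,b]$ uniformly, but both reduce to the same explicit bound.
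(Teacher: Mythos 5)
Your proof is correct and follows essentially the same route as the paper: bound $|\gamma(t)-z_0|$ below by $|t-t_0|$ via Lemma \ref{le:estimate for lip}, bound $|\gamma'(t)|$ above by a constant depending on $L$, and reduce to the elementary integral $\int_a^b |t-t_0|^{-\beta}\,dt$, split into cases according to the position of $t_0$ relative to $[a,b]$. The only cosmetic difference is that for $t_0\notin[a,b]$ the paper uses the monotone comparison $|t-t_0|\geq|t-b|$ (resp. $|t-a|$) rather than your distance bound, which as literally stated is not uniform but which your parenthetical remark correctly reduces to the endpoint case.
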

\begin{proof}
By Lemma \ref{le:estimate for lip} we have
	\bea
	\int_a^b \frac{1}{|\gamma(t) - z_0|^\beta}|\gamma'(t)|dt	& \leq &   (L+1) \int_a^{b}  \frac{1}{|t- t_0|^\beta} dt. \nonumber \\
	\eea
	If $t_0>b$, then 
	\bea
	(L+1) \int_a^{b}  \frac{1}{|t- t_0|^\beta} dt & \leq & (L+1)  \int_a^{b}  \frac{1}{|t- b|^\beta}dt \nonumber \\
	& = & (L+1) \frac{(b-t)^{1-\beta}\mid^a_{b}}{1-\beta} \nonumber \\
	& = & (L+1) \frac{(b-a)^{(1-\beta)}}{1-\beta}; \nonumber \\
	\eea
	If $t_0 <a$, then
    \bea
   (L+1) \int_a^{b}  \frac{1}{|t- t_0|^\beta} dt & \leq & (L+1) \int_a^{b}  \frac{1}{|t- a|^\beta} dt \nonumber \\
    & = & (L+1) \frac{(t-a)^{1-\beta}\mid^b_{a}}{1-\beta}  \nonumber \\
    & = & (L+1) \frac{(b-a)^{(1-\beta)}}{1-\beta};  \nonumber \\
	\eea
	If $a \leq t_0 \leq b$, then 
	\bea
	(L+1) \int_a^{b}  \frac{1}{|t- t_0|^\beta}  dt
& \leq &  (L+1) \int_a^{t_0} \frac{1}{(t_0- t)^\beta} dt + (L+1) \int_{t_0}^b \frac{1}{(t- t_0)^\beta} dt \\
	& \leq &(L+1) \frac{(t_0-t)^{1-\beta}}{1-\beta}\mid ^a_{t_0} +(L+1) \frac{(t-t_0)^{1-\beta}}{1-\beta}\mid^b_{t_0} \\
	& \leq & 2(L+1) \frac{(b-a)^{(1-\beta)}}{1-\beta}.\\
	\eea
Thus no matter whatever the condition on $t_0$, we have that	
$$ \int_a^b \frac{1}{|\gamma(t) - z_0|^\beta}|\gamma'(t)|dt	 \leq \rm{Const}_{L,a,b, \beta}.$$
\end{proof}

We generalize the previous Lemma to a product 

\begin{lemma}\label{lemma2.2}
Let $\gamma$ be a Lipschitz graph. Suppose $z_i\in \C$, $\beta_i>0$, for $i=1,\cdots, m$ and $\sum_{i=1}^{m} \beta_i=\beta<1.$
Then $$ \int_a^b \prod_{i=1}^m \left(    \frac{1}{|\gamma(t)-z_i| } \right)^{\beta_i} |\gamma'(t)|dt < \rm{Const}_{L,a,b,\beta}.$$
\end{lemma}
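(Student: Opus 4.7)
The plan is to reduce Lemma 2.2 to Lemma \ref{lemmaestimateofgamma} by means of the generalized H\"older inequality. Since $\gamma$ is an $L$-Lipschitz graph, $|\gamma'(t)|=\sqrt{1+y'(t)^2}\leq\sqrt{1+L^2}$ almost everywhere, so the factor $|\gamma'(t)|$ in the integrand is essentially harmless and we can pull out a uniform bound immediately.

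After that, I would write $z_i=(t_i,y_i)$ and invoke Lemma \ref{le:estimate for lip} to replace each $|\gamma(t)-z_i|$ by the smaller quantity $|t-t_i|$, reducing the problem to a real integral
$$
I:=\int_a^b \prod_{i=1}^m \frac{1}{|t-t_i|^{\beta_i}}\, dt.
$$
The core of the argument is then to rewrite each factor as $|t-t_i|^{-\beta_i}=\bigl(|t-t_i|^{-\beta}\bigr)^{\beta_i/\beta}$ and apply the generalized H\"older inequality with conjugate exponents $p_i=\beta/\beta_i$ (these satisfy $\sum 1/p_i=\sum \beta_i/\beta=1$). This yields
$$
I\leq \prod_{i=1}^m\left(\int_a^b \frac{dt}{|t-t_i|^{\beta}}\right)^{\beta_i/\beta}.
$$

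At this point each individual integral is controlled by the one-variable computation already carried out in the proof of Lemma \ref{lemmaestimateofgamma}: since $\beta<1$, one has $\int_a^b|t-t_i|^{-\beta}\,dt\leq C_{a,b,\beta}$ uniformly in $t_i\in\mathbb{R}$ (splitting into the three cases $t_i<a$, $a\le t_i\le b$, $t_i>b$). The product of the $m$ factors then equals $C_{a,b,\beta}^{\sum \beta_i/\beta}=C_{a,b,\beta}$, independently of $m$ and of the locations of the $z_i$.

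I do not expect a real obstacle here: the only technical point worth mentioning is making sure the exponents $p_i$ satisfy $\sum 1/p_i=1$, which is exactly why the hypothesis $\sum\beta_i=\beta$ is imposed, and that the upper bound produced is genuinely independent of the $z_i$ so that it deserves to be called $\mathrm{Const}_{L,a,b,\beta}$. Combining these steps with the initial bound $|\gamma'(t)|\leq\sqrt{1+L^2}$ gives the claimed constant depending only on $L$, $a$, $b$, and $\beta$.
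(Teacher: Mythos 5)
Your proof is correct, but it takes a different route from the paper's. The paper bounds the integrand \emph{pointwise} using the weighted arithmetic--geometric mean inequality (quoted as Corollary 2.3 of \cite{Fornaessandwu2017}), namely $\prod_i |\gamma(t)-z_i|^{-\beta_i} \leq \sum_i \frac{\beta_i}{\beta}|\gamma(t)-z_i|^{-\beta}$, and then integrates term by term, invoking Lemma \ref{lemmaestimateofgamma} for each summand. You instead work at the level of the integral, applying the generalized H\"older inequality with exponents $p_i=\beta/\beta_i$ (which are conjugate precisely because $\sum\beta_i=\beta$) to get $I\leq\prod_i\bigl(\int_a^b|t-t_i|^{-\beta}\,dt\bigr)^{\beta_i/\beta}$, and then use the uniform single-singularity bound $\int_a^b|t-t_i|^{-\beta}\,dt\leq C_{a,b,\beta}$, which is exactly the computation inside the proof of Lemma \ref{lemmaestimateofgamma}. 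The two arguments are close cousins (H\"older ultimately rests on the same convexity fact), but yours has the mild advantage of being self-contained modulo a textbook inequality, whereas the paper's leans on an external corollary; the paper's has the advantage of staying entirely pointwise and reusing Lemma \ref{lemmaestimateofgamma} verbatim rather than only its internal estimate. In both cases the resulting constant is independent of $m$ and of the locations $z_i$, as required — in your version because the exponents $\beta_i/\beta$ sum to $1$, in the paper's because the weights $\beta_i/\beta$ sum to $1$.
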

\begin{proof}
According to Corollary 2.3 in \cite{Fornaessandwu2017} we know that 
$$ 
 \prod_{i=1}^m \left(    \frac{1}{|\gamma(t)-z_i| } \right)^{\beta_i}  \leq   \sum _{i=1}^{m}   \frac{\beta_i}{\beta} \left(    \frac{1}{|\gamma(t)-z_i| } \right)^\beta.  
$$
By Lemma \ref{lemmaestimateofgamma}, we finally have 
$$ \int_a^b \prod_{i=1}^m\left(    \frac{1}{|\gamma(t)-z_i| } \right)^{\beta_i} |\gamma'(t)|dt < \rm{Const}_{L,a,b,\beta}.$$
\end{proof}

Let $\gamma$ be a Lipschitz graph. We take an arc length parametrization of $\gamma$ that we denote by $s$.
 Let $h$ be a function on $\gamma$, define $$\int_{\gamma}hds: = \int_a^b h(\gamma(t)) |\gamma'(t)|dt.$$

From Lemma \ref{lemma2.2}, we are able to prove
\begin{theorem}\label{estimatefor phi}
Let $\gamma$ be a Lipschitz graph. Let $\mu$ be any nonnegative measure with total mass $\beta<1$ on an open set $U$ in $\C$ containing $\gamma$. 
If $\varphi(z)=\int \log |z-\zeta| d\mu(\zeta)$, then we have 
$$
\int_{\gamma} e^{-\varphi}ds < C_{L,\beta,a, b}
$$
where $C_{L,\beta,a, b} >0$ is a constant depending on $L,\beta,a, b.$  
\end{theorem}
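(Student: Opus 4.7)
The plan is to reduce the estimate to Lemma \ref{lemmaestimateofgamma} by rewriting the exponential as an integral of powers via Jensen's inequality and swapping the order of integration by Fubini.

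First I would normalize $\mu$ to a probability measure by setting $d\nu = d\mu/\beta$, so that
\begin{equation*}
-\varphi(z) \;=\; \beta \int \log\frac{1}{|z-\zeta|}\, d\nu(\zeta) \;=\; \int \log\frac{1}{|z-\zeta|^{\beta}}\, d\nu(\zeta).
\end{equation*}
Applying Jensen's inequality to the convex function $\exp$ and the probability measure $\nu$ gives the pointwise bound
\begin{equation*}
e^{-\varphi(z)} \;\leq\; \int \frac{1}{|z-\zeta|^{\beta}}\, d\nu(\zeta) \;=\; \frac{1}{\beta}\int \frac{1}{|z-\zeta|^{\beta}}\, d\mu(\zeta),
\end{equation*}
valid (with the convention $e^{-\infty}=0$ versus an infinite right-hand side when $z$ is in the support of the atomic part of $\mu$) for every $z$.

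Next I would integrate over $\gamma$ and use Fubini's theorem (the integrand is non-negative and measurable) to obtain
\begin{equation*}
\int_{\gamma} e^{-\varphi}\, ds \;\leq\; \frac{1}{\beta}\int \left( \int_{\gamma} \frac{ds(z)}{|z-\zeta|^{\beta}} \right) d\mu(\zeta).
\end{equation*}
By Lemma \ref{lemmaestimateofgamma} applied with $z_0 = \zeta$, the inner integral is bounded by a constant $\mathrm{Const}_{L,a,b,\beta}$ independent of $\zeta$ (here we use $\beta < 1$). Since $\mu$ has total mass $\beta$, this yields
\begin{equation*}
\int_{\gamma} e^{-\varphi}\, ds \;\leq\; \frac{1}{\beta}\cdot \mathrm{Const}_{L,a,b,\beta}\cdot \beta \;=\; C_{L,\beta,a,b},
\end{equation*}
which is the desired estimate.

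There is no real obstacle here beyond organizing the steps: the substantive work was already done in Lemma \ref{lemmaestimateofgamma}. The only subtlety is justifying the pointwise Jensen bound at atoms of $\mu$, which is handled by the observation that whenever the left-hand side is infinite so is the right-hand side, so the inequality is trivially valid and the Fubini step still delivers a finite bound once integrated against $ds$.
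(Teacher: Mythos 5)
Your proof is correct, and it takes a genuinely different route from the paper's. The paper first truncates the kernel, setting $\psi_n(z,\zeta)=\max\{\log|z-\zeta|,-n\}$ so that $\varphi_n\searrow\varphi$ and monotone convergence applies, then approximates $\mu$ by a finite sum of point masses $\sum_i\beta_i\delta_{\zeta_i}$, and finally controls the resulting product $\prod_i|\gamma(t)-\zeta_i|^{-\beta_i}$ via Lemma \ref{lemma2.2}, whose engine is the weighted arithmetic--geometric mean inequality $\prod_i u_i^{\beta_i}\leq\sum_i\frac{\beta_i}{\beta}u_i^{\beta}$. That inequality is precisely the discrete case of the Jensen step you perform, so your argument is the ``continuous'' version of theirs: by normalizing $\mu$ to a probability measure and applying Jensen to $\exp$ directly, you bypass the truncation, the discretization lemma (Lemma 2.4 of \cite{Fornaessandwu2017}), and Lemma \ref{lemma2.2} altogether, needing only Tonelli and the single-singularity estimate of Lemma \ref{lemmaestimateofgamma}, whose bound is indeed uniform in $z_0$. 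This is shorter and arguably more transparent; what the paper's route buys is that it stays at the level of finite convexity inequalities and reuses approximation machinery already set up in \cite{Fornaessandwu2017}. One small wording point: where $\varphi(z)=-\infty$ (e.g.\ at an atom of $\mu$) the left-hand side $e^{-\varphi(z)}$ is $+\infty$, not $0$; your parenthetical remark is slightly garbled there, but your substantive observation is right --- in that case $\int\log\frac{1}{|z-\zeta|^{\beta}}\,d\nu=+\infty$ forces $\int|z-\zeta|^{-\beta}\,d\nu=+\infty$ as well (since $\log u\leq u$), so the pointwise inequality holds for every $z$ with both sides possibly infinite, and the Tonelli step is unaffected.
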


\begin{proof} 
Define $\psi_n(z,\zeta)=\max\{\log|z-\zeta|,-n\}$ and 
$$
\varphi_n(z)=\int \psi_n(z,\zeta)d\mu(\zeta).
$$ 
Then $\varphi_n$ is continuous and $\varphi_n\searrow \varphi$ pointwise.
Hence $e^{-\varphi_n(z)}\nearrow e^{-\varphi(z)}$. 
Therefore, 
it is enough to show

$$
\int_{\gamma} e^{-\varphi_n}  ds \leq   C_{L,\beta,a, b} + \frac1n.
$$
We fix $n$. Let $\delta>0.$ 
Since $\psi_n$ is continuous,  by Lemma 2.4 in \cite{Fornaessandwu2017}, there exist $\zeta_i\in U$ such that the measure $\mu_n=\sum_{i=1}^m \beta_i\delta_{\zeta_i}$ has total mass $\beta<1$ and
$
|\tilde{\varphi}_n -\varphi_n|< \delta,
$
where $$\tilde{\varphi}_n(z):= \int \psi_n(z,\zeta)d\mu_n(\zeta) = \sum_{i=1}^m\beta_i\psi_n(z,\zeta_i)\geq \sum_{i=1}^m\beta_i\log\vert z-\zeta_i\vert.$$
Hence, we get
\begin{align}
\int_\gamma e^{-\varphi_n}ds & \leq e^\delta \int_{\gamma} e^{-\tilde{\varphi}_n}ds\nonumber\\
&\leq e^{\delta}  \int_a^b \prod_{i=1}^m\dfrac{1}{\vert \gamma(t) - \zeta_i\vert^{\beta_i}}|\gamma'(t)|dt.\label{phin}
\end{align}
By Lemma \ref{lemma2.2}, we get  
\begin{equation}\label{prod} 
\int_a^b \prod_{i=1}^m\dfrac{1}{\vert \gamma(t) - \zeta_i\vert^{\beta_i}}|\gamma'(t)|dt<  C_{L,\beta,a, b}.
\end{equation}
Hence, by combining \eqref{phin} and \eqref{prod} and by choosing $\delta$ small enough, we get 
$$\int_{\gamma}  e^{-\varphi_n}ds\leq  C_{L,\beta,a, b}+ \dfrac{1}{n}.$$
\end{proof}

\begin{corollary}\label{cor real}
Theorem \ref{estimatefor phi} holds with possibly larger constant for subharmonics function $\varphi$ on a neighborhood $U$ of $\gamma$ in $\C$ with $\mu:= \dfrac{1}{2\pi}{\Delta \varphi}_{\mid_U}$ being of total mass on $U$ strictly less than $1$.
\end{corollary}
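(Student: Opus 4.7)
The strategy is to reduce to Theorem \ref{estimatefor phi} by decomposing $\varphi$ near $\gamma$ as a logarithmic potential with a measure of mass strictly less than $1$, plus a harmonic (hence locally bounded) remainder. The plan breaks into three steps.

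First, I would shrink the neighborhood: pick an open set $U'$ with $\gamma \subset U' \Subset U$, and apply Lemma \ref{le:decomposition} to the inclusion $U' \Subset U$. This yields a globally subharmonic function $\psi$ on $\C$ and a harmonic function $h$ on $U'$ with $\varphi = \psi + h$ on $U'$. Inspecting the proof of Lemma \ref{le:decomposition}, the Riesz measure of $\psi$ is $\chi\mu$ for a cutoff $\chi$ with $0 \leq \chi \leq 1$, $\chi \equiv 1$ on a neighborhood of $\overline{U'}$, and $\mathrm{supp}\,\chi \subset U$. Since $\chi \leq 1$, the total mass of $\chi\mu$ is at most $\mu(U) < 1$; call this mass $\beta$.

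Second, I would compare $\psi$ with its own logarithmic potential $\tilde{\varphi}(z):=\int \log|z-\zeta|\,d(\chi\mu)(\zeta)$. Since $\tilde{\varphi}$ is subharmonic on $\C$ with the same Riesz measure as $\psi$, the difference $g := \psi - \tilde{\varphi}$ satisfies $\Delta g = 0$ in the sense of distributions, so it is harmonic on all of $\C$ and hence bounded on the compact arc $\gamma$. The harmonic function $h$ is likewise bounded on $\gamma \subset U'$. Combining, $\varphi = \tilde{\varphi} + (g+h)$ on $U'$ with $|g + h| \leq M$ on $\gamma$ for some $M>0$.

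Finally, because $\chi\mu$ is a nonnegative measure of total mass $\beta < 1$ supported in an open set $U$ containing $\gamma$, Theorem \ref{estimatefor phi} applies to $\tilde{\varphi}$ and gives
$$
\int_\gamma e^{-\varphi}\,ds \;\leq\; e^{M}\int_\gamma e^{-\tilde{\varphi}}\,ds \;\leq\; e^{M}\,C_{L,\beta,a,b},
$$
which is the claimed bound with the possibly larger constant $e^{M}C_{L,\beta,a,b}$. The only real obstacle is bookkeeping: one must verify that the cutoff from Lemma \ref{le:decomposition} preserves the strict mass bound (immediate from $\chi \leq 1$) and use the standard potential-theoretic fact that two subharmonic functions with the same Riesz measure differ by a harmonic function. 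No new analytic input is required beyond Theorem \ref{estimatefor phi} itself.
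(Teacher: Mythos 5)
Your proof is correct and follows essentially the same route as the paper: both reduce to Theorem \ref{estimatefor phi} by writing $\varphi$ near $\gamma$ as the logarithmic potential of a nonnegative measure of total mass $\beta<1$ plus a harmonic remainder that is bounded on the compact arc $\gamma$. The only difference is that the paper invokes the Riesz decomposition theorem directly, whereas you assemble the same decomposition from Lemma \ref{le:decomposition} and the fact that two subharmonic functions with the same Riesz measure differ by a harmonic function --- which, if anything, is slightly more careful about where the harmonic remainder is bounded.
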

\begin{proof}
By Riesz decomposition theorem (see for example Theorem 3.7.9 in \cite{Rans95}), we can decompose any subharmonic function as $\varphi(z)=\int_U \log\vert z-\zeta\vert d\mu(\zeta)+ h(z)$ where $h$ is harmonic.
Because $h$ is bounded on $U$, Theorem \ref{estimatefor phi} gives the result with a constant depending, in addition, on $h$. 
\end{proof}
We recall now the definition of the Lelong number $\nu(\varphi)$ of a subharmonic function $\varphi$ at a point $z$ in $\C$, where $\varphi\not\equiv -\infty$, that is equal to the mass of the ``Riesz" measure $\mu =\dfrac{1}{2\pi}\Delta \varphi$ at the point $z$. Below is an equivalent definition given by \cite{Ki1,Ki2},
$$\nu(\varphi)(z):=\lim_{r\to 0^+} \dfrac{\max_{\vert \zeta-z\vert =r} \varphi(\zeta)}{\log r}.$$
From this definition, Kiselman \cite{Ki1}, Theorem 4.1 proves that if $e^{-\varphi}$ is locally integrable at $z$ in $\C^n$ then $\nu(\varphi)(z)<2n$.

We recall the following result which is a converse of the previous property in complex dimension one:

\begin{theorem}\label{expintcomplex}
If $\varphi\not\equiv -\infty$ is subharmonic and $\nu(\varphi)(z)<2$ for a point $z,$ then $e^{-\varphi}$ is locally integrable in a neighborhood of $z$.
\end{theorem}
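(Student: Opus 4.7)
The strategy is to adapt the proof of Theorem~\ref{estimatefor phi} to the planar setting, replacing the integrability threshold $\beta<1$ (inherited from arc length on a Lipschitz graph) by the correct planar threshold $\beta<2$.

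First I would reduce to a clean local situation. By Lemma~\ref{le:decomposition} we may assume $\varphi$ is subharmonic on all of $\C$, up to a bounded harmonic perturbation on any fixed neighborhood of $z$. Translating, take $z=0$ and set $c := \nu(\varphi)(0) < 2$. Let $\mu := \frac{1}{2\pi}\Delta\varphi$ be the Riesz measure; then $\mu(\{0\}) = c$, and continuity from above along $B_R(0) \downarrow \{0\}$ yields $R>0$ with $\beta := \mu(B_R(0)) < 2$. The local Riesz decomposition (Theorem~3.7.9 in \cite{Rans95}) gives, on a neighborhood of $\overline{B_{R/2}(0)}$,
\begin{equation*}
\varphi(w) = \int_{B_R(0)} \log|w-\zeta|\, d\mu(\zeta) + h(w) =: \varphi_1(w) + h(w),
\end{equation*}
with $h$ harmonic, hence bounded on $\overline{B_{R/2}(0)}$. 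It therefore suffices to show $\int_{B_{R/2}(0)} e^{-\varphi_1(w)}\, d\lambda(w) < \infty$.

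Next I would prove this integrability by mimicking the proof of Theorem~\ref{estimatefor phi} line by line. Set $\psi_n(w,\zeta) := \max\{\log|w-\zeta|, -n\}$ and $\varphi_{1,n}(w) := \int \psi_n(w,\zeta)\, d\mu(\zeta)$ (integrated over $B_R(0)$); these are continuous and decrease pointwise to $\varphi_1$, so by monotone convergence it suffices to bound $\int_{B_{R/2}(0)} e^{-\varphi_{1,n}}\, d\lambda$ uniformly in $n$. Approximating $\mu|_{B_R(0)}$ by discrete measures $\sum_{i=1}^m \beta_i \delta_{\zeta_i}$ with $\zeta_i \in B_R(0)$ and $\sum \beta_i = \beta$ as in Lemma~2.4 of \cite{Fornaessandwu2017}, and applying the weighted AM--GM inequality (Corollary~2.3 of \cite{Fornaessandwu2017}), the problem reduces to the planar analog of Lemma~\ref{lemma2.2}: for $\zeta_i \in B_R(0)$ and $\beta_i > 0$ with $\sum_{i=1}^m \beta_i = \beta < 2$,
\begin{equation*}
\int_{B_{R/2}(0)} \prod_{i=1}^m \frac{1}{|w-\zeta_i|^{\beta_i}}\, d\lambda(w) \leq C_{R,\beta}.
\end{equation*}

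This estimate follows from $\prod_i |w-\zeta_i|^{-\beta_i} \leq \sum_i \frac{\beta_i}{\beta} |w-\zeta_i|^{-\beta}$ together with the elementary planar bound
\begin{equation*}
\int_{B_{R/2}(0)} |w-\zeta_i|^{-\beta}\, d\lambda(w) \leq \int_{|w-\zeta_i| \leq 3R/2} |w-\zeta_i|^{-\beta}\, d\lambda(w) = \frac{2\pi}{2-\beta}\left(\frac{3R}{2}\right)^{2-\beta},
\end{equation*}
which is finite \emph{precisely because} $\beta < 2$. This single computation replaces Lemma~\ref{lemmaestimateofgamma}; from there the argument proceeds as the verbatim planar analog of the proof of Theorem~\ref{estimatefor phi}. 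The main conceptual point is this dimension-driven shift of the threshold from $1$ to $2$; the only real obstacle is the localization in the first step, which hinges on identifying $\nu(\varphi)(z)$ with the Riesz mass $\mu(\{z\})$ so that $R$ can be shrunk until $\beta < 2$, a fact recalled in the discussion preceding the theorem.
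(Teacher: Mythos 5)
Your proof is correct, but it takes a genuinely different route from the paper, because the paper does not actually prove Theorem \ref{expintcomplex}: its ``proof'' is a pointer to the note at the top of p.~99 of H\"ormander \cite{Hormander66}, to Theorem 2.5 of \cite{Fornaessandwu2017}, and to Skoda's Proposition 7.1 in \cite{Skoda1972}. What you have written is a complete, self-contained argument obtained by transplanting the paper's own proof of Theorem \ref{estimatefor phi} from arcs to the plane, and every step checks out: the localization via the Riesz decomposition and continuity from above of $\mu$ on shrinking balls correctly isolates a ball on which the Riesz mass $\beta$ is $<2$ (using the identification of $\nu(\varphi)(z)$ with $\mu(\{z\})$ recalled just before the theorem); the truncation $\psi_n$, the discretization of $\mu$ via Lemma 2.4 of \cite{Fornaessandwu2017}, and the weighted AM--GM step are exactly as in the proofs of Theorem \ref{estimatefor phi} and Lemma \ref{lemma2.2}; and the radial computation $\int_{|u|\le\rho}|u|^{-\beta}\,d\lambda=\frac{2\pi}{2-\beta}\,\rho^{2-\beta}$ is the correct planar replacement for Lemma \ref{lemmaestimateofgamma} and is precisely where the integrability threshold shifts from $1$ to $2$. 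What your route buys is self-containedness (the reader need not chase the external references, and the parallel with the arc case is made explicit); what the paper's citation buys is brevity and a link to the classical sources --- this is Skoda's exponential integrability in dimension one, and Theorem 2.5 of \cite{Fornaessandwu2017} is in essence proved by the very argument you give.
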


\begin{proof}
We refer to the note at top of p. 99 in H\"ormander's book \cite{Hormander66}. This is also a consequence of Theorem 2.5 in \cite{Fornaessandwu2017}. We also refer to Proposition 7.1 in \cite{Skoda1972} for a generalization to higher dimensions\footnote{ Many authors refer to this result as ``Skoda's exponential integrability"}.
\end{proof}
Now we state the previous Theorem for points in a Lipschitz graph.

\begin{theorem}\label{expintreal}
Let $\gamma$ be a Lipschitz graph and $\varphi\not\equiv -\infty$ a subharmonic function on $\C$. Then $\nu(\varphi)(\gamma(t))<1$ for a point $\gamma(t)\in \gamma$ if and only if $e^{-\varphi}$ is locally integrable in a neighborhood of $\gamma(t)$ on $\gamma$.
\end{theorem}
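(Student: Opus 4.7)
I would prove the two directions separately, reducing both to the estimates already developed in this section.

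\emph{The ``if'' direction.} Suppose $\nu := \nu(\varphi)(z_0) < 1$, where $z_0 = \gamma(t_0)$. Since $\nu$ equals the mass $\mu(\{z_0\})$ of the Riesz measure $\mu = \tfrac{1}{2\pi}\Delta\varphi$ at $z_0$, outer regularity furnishes an open ball $V = B(z_0, r)$ with $\mu(V) < 1$. Choose $U \Subset V$ a slightly smaller ball around $z_0$. Lemma \ref{le:decomposition} (via the cut-off construction used in its proof) yields a splitting $\varphi = \psi + h$ on $U$, where $\psi$ is globally subharmonic on $\C$ with $\Delta\psi = \chi \Delta\varphi$ of total mass at most $\mu(V) < 1$, and $h$ is harmonic on $U$. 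The arc $\gamma \cap U$ is itself a Lipschitz graph over a bounded interval, so Corollary \ref{cor real} applies to $\psi$ on this arc and gives $\int_{\gamma \cap U} e^{-\psi}\,ds < \infty$. Since $h$ is bounded on $U$, local integrability of $e^{-\varphi}$ on $\gamma$ near $z_0$ follows.

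\emph{The ``only if'' direction.} I argue the contrapositive: assume $\nu \geq 1$. On a neighborhood of $z_0$, the Riesz decomposition writes
$$
\varphi(z) = \nu\log|z - z_0| + \varphi_1(z),
$$
where $\varphi_1$ is the logarithmic potential of $\mu - \nu\delta_{z_0}$ plus the harmonic term. Since $\varphi_1$ is subharmonic, hence upper semicontinuous, there is a constant $M$ and a neighborhood $U$ of $z_0$ on which $\varphi_1 \leq M$, so that
$$
e^{-\varphi(z)} \geq e^{-M}\,|z - z_0|^{-\nu} \qquad \text{on } U.
$$
Parametrizing $\gamma$ locally as $\gamma(t) = (t, y(t))$ with $y$ an $L$-Lipschitz function and $\gamma(t_0) = z_0$, the elementary bounds $|\gamma(t) - z_0| \leq \sqrt{1+L^2}\,|t-t_0|$ and $|\gamma'(t)| \geq 1$ yield
$$
\int_{\gamma \cap U} e^{-\varphi}\,ds \;\geq\; \frac{e^{-M}}{(1+L^2)^{\nu/2}} \int_{|t-t_0| < \epsilon} \frac{dt}{|t - t_0|^{\nu}},
$$
and the right-hand side diverges for $\nu \geq 1$.

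\emph{Main obstacle.} The routine computation is the divergent integral above; the main conceptual ingredient is the identification of $\nu(\varphi)(z_0)$ with $\mu(\{z_0\})$ and the resulting splitting of $\varphi$ into a pure logarithmic singularity plus a remainder that is bounded above on a neighborhood of $z_0$. Once this is in hand, the Lipschitz estimate (which is a mirror of Lemma \ref{le:estimate for lip} in the opposite direction) makes the divergence immediate, and the ``if'' direction is essentially just a localized application of Corollary \ref{cor real}.
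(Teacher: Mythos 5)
Your proposal is correct and follows essentially the same route as the paper: the forward direction is a localized application of Corollary \ref{cor real} (you merely fill in the outer-regularity and cut-off details that the paper leaves implicit), and the reverse direction extracts the logarithmic singularity from the Riesz measure and bounds $\int e^{-\varphi}\,ds$ below by a divergent integral $\int |t-t_0|^{-\nu}\,dt$ using the Lipschitz bound $|\gamma(t)-z_0|\le\sqrt{1+L^2}\,|t-t_0|$. The only cosmetic difference is that you subtract $\nu\,\delta_{z_0}$ where the paper subtracts $\delta_{z_0}$; both yield a subharmonic (hence locally bounded above) remainder and a divergent integral for $\nu\ge 1$.
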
 

\begin{proof}
$\Rightarrow)$ By hypothesis, $\nu(\varphi)(\gamma(t))=\dfrac{1}{2\pi}\Delta\varphi(\gamma(t))<1$ so this is a direct consequence of Corollary \ref{cor real}.

$\Leftarrow)$ If there exists $z_0 =(t_0,y_0)\in \gamma$ so that $\nu(\varphi)(z_0)\geq 1$, then we have $\frac{1}{2\pi} \Delta \varphi (z_0) \geq 1$. Therefore we have $\frac{1}{2\pi} \Delta \varphi - \delta_{z_0} \geq 0$ on $\C$. Hence we may find a subharmonic function $\psi$ on $\C$ such that $\frac{1}{2\pi}\Delta \psi = \frac{1}{2\pi}\Delta \varphi - \delta_{z_0}$.  
Up to a harmonic function we may write $\varphi  = \log |z-z_0| + \psi $. Thus

\begin{eqnarray*}
\int _{U(z_0) \cap \gamma} e^{-\varphi}ds& =&\int _{U(z_0) \cap \gamma} \frac{1}{|z-z_0|} e^{-\psi}ds, \psi \ \text{is locally bounded above near} \ z_0 \\ \nonumber 
	&  \geq & C \int _{U(z_0) \cap \gamma}\frac{1}{|z-z_0|} ds \\ \nonumber 
	&  \geq  & C \int_{t_0-\eta} ^{t_0+\eta} \frac{1}{\sqrt{1+L^2}|t-t_0|} dt   \\ \nonumber 
 &  = &	C \frac{1}{\sqrt{1+L^2}} \left(  \int_{t_0-\eta} ^{t_0} \frac{1}{t_0-t}dt +  \int_{t_0} ^{t_0+\eta} \frac{1}{t-t_0}dt \right)  \\ \nonumber 
 &  = &	C \frac{1}{\sqrt{1+L^2}} \left( \lim_{\varepsilon \rightarrow 0^+} \int_{t_0-\eta} ^{t_0-\varepsilon} \frac{1}{t_0-t}dt + \lim_{\varepsilon \rightarrow 0^+}  \int_{t_0+ \varepsilon} ^{t_0+\eta} \frac{1}{t-t_0}dt \right)  \\ \nonumber 
 &  = &	2C \frac{1}{\sqrt{1+L^2}}  \lim_{\varepsilon \rightarrow 0^+} (\ln \eta - \ln \varepsilon)  \\ \nonumber 
	&  = & \infty,
\end{eqnarray*}
where $\eta$ is a sufficiently small constant. 
\end{proof}
Let $\alpha>0$. We will write $\nu(\varphi)<\alpha$ to mean that $\nu(\varphi)(z)<\alpha$ for all points $z$ in the given subset. However, $\nu(\varphi)\geq \alpha$ should be understood as there exist at least one point $z$ such that $\nu(\varphi)(z)\geq \alpha$.\\

\section{Proof of Theorem \ref{Weierstrass ab}}

In this section, we prove Theorem \ref{Weierstrass ab} which generalizes the Weierstrass theorem to weighted $L^2$-spaces on Lipschitz graphs. The particularity here is to allow the weight $\varphi$ to have singularities on the given set. We will then carefully take the local integrability of $e^{-\varphi}$ (Section 2) into account.

We first recall some classical results

\begin{theorem}[Weierstrass 1883 \cite{Weierstrass1885}]\label{weierstrasstheorem}
Suppose $f$ is a continuous function on a closed bounded interval $[a, b] \subset \mathbb{R}$. For each $\varepsilon >0$ there exists a polynomial $P$ such that
$$
| f(x) - P(x) | < \varepsilon, \ \ \ \   \forall x \in [a,b].
$$
\end{theorem}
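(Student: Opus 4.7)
The plan is to prove this via explicit construction using Bernstein polynomials, since this gives a concrete approximation scheme and the convergence argument is elementary. First I would reduce to the case $[a,b]=[0,1]$ by the affine change of variable $x \mapsto a+(b-a)x$, under which polynomials map to polynomials and uniform norms on the two intervals correspond. Then for $f \in C([0,1])$ I would define the Bernstein polynomials
$$B_n(f)(x) = \sum_{k=0}^{n} f\!\left(\tfrac{k}{n}\right) \binom{n}{k} x^k (1-x)^{n-k}$$
and aim to show that $B_n(f) \to f$ uniformly on $[0,1]$.

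Next I would establish the three moment identities for the binomial weights $p_{n,k}(x):=\binom{n}{k} x^k(1-x)^{n-k}$, namely $\sum_k p_{n,k}(x)=1$, $\sum_k k\,p_{n,k}(x)=nx$, and $\sum_k k(k-1)\,p_{n,k}(x)=n(n-1)x^2$, which I would derive cleanly by differentiating the binomial identity $(x+(1-x))^n=1$ once and twice in $x$. Combining these yields the key variance estimate
$$\sum_{k=0}^n \left(x - \tfrac{k}{n}\right)^2 p_{n,k}(x) \;=\; \tfrac{x(1-x)}{n} \;\leq\; \tfrac{1}{4n}.$$

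Given $\varepsilon>0$, the uniform continuity of $f$ on the compact interval $[0,1]$ produces $\delta>0$ such that $|f(x)-f(y)|<\varepsilon/2$ whenever $|x-y|<\delta$. Using $\sum_k p_{n,k}(x) = 1$, I would rewrite $f(x) - B_n(f)(x) = \sum_k \bigl(f(x)-f(k/n)\bigr) p_{n,k}(x)$ and split the sum according to whether $|x-k/n|<\delta$ or $|x-k/n|\geq\delta$. The first piece is bounded by $\varepsilon/2$ by uniform continuity; the second piece is controlled by $2\|f\|_\infty \cdot \delta^{-2}\sum_k (x-k/n)^2 p_{n,k}(x) \leq \|f\|_\infty/(2n\delta^2)$ via the variance identity and Chebyshev-type reasoning. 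Choosing $n$ so large that this last quantity falls below $\varepsilon/2$ yields $\|f - B_n(f)\|_\infty<\varepsilon$, so $P:=B_n(f)$ does the job after undoing the affine change of variable.

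The argument is entirely standard and there is no genuine obstacle; the only point that deserves a little care is the bookkeeping with the moment identities, which the differentiation trick makes painless, and the splitting of the sum at the scale $\delta$ produced by uniform continuity. An alternative route would be to convolve $f$ (after extending it continuously and compactly supported) with the Landau kernel $c_n(1-x^2)^n$ and then truncate the Taylor expansion of the convolution, but I would favor the Bernstein proof because it is constructive, self-contained, and requires no extension of $f$ outside $[a,b]$.
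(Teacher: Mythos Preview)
Your Bernstein-polynomial argument is correct and complete. Note, however, that the paper does not give its own proof of this statement: it is simply recalled as a classical result with a citation to Weierstrass \cite{Weierstrass1885}, so there is no proof in the paper to compare against. Your approach is one of the standard modern proofs and would serve perfectly well if a proof were required.
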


\begin{theorem}[Lavrent'ev, 1936 \cite{Laventi1936}]\label{Rungetheorem}
	Let $K\subset \C$ be compact with $\C \setminus K$ connected. Suppose
	that $f$ is continuous on $K$. If $\mathring{K} = \emptyset$, then for each $\varepsilon >0$ there exists a polynomial $P$ such that
	$$
	| f(x) - P(x) | < \varepsilon, \ \ \ \ \forall x \in K.
	$$
\end{theorem}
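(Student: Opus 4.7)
The plan is to prove Lavrent'ev's theorem by duality via the Hahn--Banach theorem and an analysis of Cauchy transforms of finite complex Borel measures.

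\textbf{Step 1 (Duality reduction).} First I would invoke the Hahn--Banach theorem together with the Riesz representation theorem: polynomials are dense in $C(K)$ (with the sup norm) if and only if every finite complex regular Borel measure $\mu$ on $K$ satisfying
$$
\int_K z^n \, d\mu(z) = 0 \quad \text{for all } n \geq 0
$$
is the zero measure. So it is enough to fix such an annihilating $\mu$ and show $\mu=0$.

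\textbf{Step 2 (Cauchy transform on the complement).} Next I would introduce the Cauchy transform
$$
\hat{\mu}(w) = \int_K \frac{d\mu(z)}{z-w}, \qquad w \in \mathbb{C}\setminus K,
$$
which is holomorphic on $\mathbb{C}\setminus K$. For $|w|$ larger than $\max_{z\in K}|z|$, expanding $1/(z-w)$ as a geometric series in $z/w$ and using the annihilation of $\mu$ on polynomials gives
$$
\hat{\mu}(w) = -\sum_{n=0}^{\infty} w^{-(n+1)} \int_K z^n \, d\mu(z) = 0.
$$
Since $\mathbb{C}\setminus K$ is connected by hypothesis, the identity theorem yields $\hat\mu \equiv 0$ on $\mathbb{C}\setminus K$.

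\textbf{Step 3 (Global distributional identity).} The Cauchy kernel is locally integrable, so $\hat\mu$ extends to an $L^1_{\mathrm{loc}}(\mathbb{C})$ function via the same defining integral, and one has the distributional equation $\bar\partial \hat{\mu} = \pi\mu$. To deduce $\mu=0$ it suffices to show $\hat{\mu}=0$ almost everywhere on all of $\mathbb{C}$, not just on $\mathbb{C}\setminus K$.

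\textbf{Step 4 (Using $\mathring K = \emptyset$).} Now I would use the empty-interior hypothesis: $\mathbb{C}\setminus K$ is an open dense subset of $\mathbb{C}$, on which $\hat\mu$ vanishes identically. If $K$ additionally has two-dimensional Lebesgue measure zero (which covers the cases of interest in this paper, such as arcs and graphs), the conclusion is immediate: $\hat\mu=0$ a.e.\ on $\mathbb{C}$, hence $\mu=\tfrac{1}{\pi}\bar\partial\hat\mu=0$ as a distribution, and therefore as a measure. In the full generality of empty interior but possibly positive area (e.g.\ fat Cantor-type sets), one invokes Vitushkin's localization technique for the Cauchy transform, which shows that the continuous analytic capacity $\alpha(K)$ controls approximation and that $\hat\mu$ must vanish a.e.\ on $K$ as well; equivalently, the statement is a direct specialization of Mergelyan's theorem, whose hypothesis on holomorphicity in $\mathring K$ becomes vacuous.

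\textbf{Main obstacle.} The delicate part is Step~4: translating ``$\hat\mu=0$ on the dense open set $\mathbb{C}\setminus K$'' into ``$\hat\mu=0$ a.e.\ in $\mathbb{C}$'' when $K$ may carry positive planar measure. For the sets arising in the sequel of the paper (Lipschitz graphs, boundaries of Carath\'eodory domains), this step trivializes because such $K$ have planar Lebesgue measure zero, so the proof collapses to Steps 1--3 plus the remark that $\hat\mu$ vanishes a.e.\ on the complement of a null set.
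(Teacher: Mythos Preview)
The paper does not give its own proof of this statement: Lavrent'ev's theorem is simply \emph{recalled} as a classical result with a citation to \cite{Laventi1936}, alongside the Weierstrass and Mergelyan theorems, and is then used as a black box (in the proof of Theorem~\ref{Weierstrassetensio}, to pass from continuous functions on a Lipschitz graph to polynomials). There is therefore nothing in the paper to compare your argument against.

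As for the argument itself: Steps~1--3 are the standard duality route and are correct. Step~4 is where you should be careful. Your reduction to ``$\hat\mu=0$ a.e.'' is fine when $K$ has planar measure zero, and you rightly note that this already covers every application in the paper (graphs, arcs). For general $K$ with empty interior but positive area, however, appealing to Mergelyan is circular (Lavrent'ev is the $\mathring K=\emptyset$ case of Mergelyan), and invoking Vitushkin is heavy machinery that postdates the result. A self-contained way to close the gap is: since $\mathbb C\setminus K$ is connected and dense, every point of $K$ is a peak point for $R(K)$ (take peaking functions $((p-q)/(z-q))^n$ with $q$ the center of a small disc in $K^c$ touching near $p$), so $R(K)=C(K)$ by the Bishop peak-point criterion, and then $P(K)=R(K)$ by Runge. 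Alternatively, one can argue directly that $\hat\mu$, viewed as an $L^1_{\mathrm{loc}}$ function, vanishes a.e.\ on $K$ by a Lebesgue-point argument combined with the density of $\mathbb C\setminus K$; this is the route taken in Gamelin's \emph{Uniform Algebras}, Chapter~II.
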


\begin{theorem}[Mergelyan, 1951 \cite{Mergelyan1951}]\label{Mergelyantheorem}
Let $K\subset \C$ be compact with $\C \setminus K$ connected. Suppose
 that $f$ is continuous on $K$ and holomorphic on $\mathring{K}$. Then, for each $\varepsilon >0$ there exists a polynomial $P$ such that
	$$
	| f(x) - P(x) | < \varepsilon, \ \ \ \ \forall x \in K.
	$$
\end{theorem}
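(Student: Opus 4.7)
The plan is to prove Mergelyan's theorem by the classical Cauchy--Green plus localization strategy, reducing to a smooth compactly supported function whose $\bar{\partial}$ is small and concentrated near $\partial K$, reconstructing it via the Cauchy kernel, localizing that reconstruction into pieces supported on small disks, approximating each piece by a rational function with a single pole, and finally using the connectedness of $\C \setminus K$ together with Runge's theorem to replace each rational approximant by a polynomial.

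First I would extend $f$ by Tietze to a continuous compactly supported function on $\C$, still denoted $f$, with modulus of continuity $\omega$. Mollifying with a standard smooth bump $\chi_\delta$ at scale $\delta$ produces $F_\delta := f \ast \chi_\delta$, smooth and compactly supported, satisfying $\|F_\delta - f\|_{L^\infty(\C)} \leq \omega(\delta)$ and the pointwise bound $|\bar{\partial} F_\delta(z)| \leq C\,\omega(\delta)/\delta$. Because $f$ is holomorphic on $\mathring{K}$, the support of $\bar{\partial} F_\delta$ lies in a $\delta$-neighborhood of $\C\setminus \mathring{K}$, so in particular it clusters against $\partial K$ from the relevant side. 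The Cauchy--Green formula then represents
$$F_\delta(z) = -\frac{1}{\pi}\int_{\C} \frac{\bar{\partial} F_\delta(\zeta)}{\zeta - z}\, dA(\zeta), \qquad z \in \C.$$

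Next I would perform a Vitushkin-style localization. Cover the support of $\bar{\partial} F_\delta$ by a bounded-overlap family of disks $D_j = D(z_j, \delta)$ and pick a subordinate smooth partition of unity $\{\phi_j\}$ with $|\nabla \phi_j| \leq C/\delta$. Splitting the integral yields $F_\delta = \sum_j T_j$ with
$$T_j(z) := -\frac{1}{\pi}\int \frac{\phi_j(\zeta)\, \bar{\partial} F_\delta(\zeta)}{\zeta - z}\, dA(\zeta),$$
so each $T_j$ is holomorphic off $\overline{D_j}$, vanishes at infinity, and satisfies $\|T_j\|_{L^\infty(\C)} \leq C\,\omega(\delta)$. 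Since $\C \setminus K$ is connected, every disk $D_j$ meeting $\partial K$ also meets $\C\setminus K$; for such a $j$ pick $\zeta_j \in D_j \cap (\C \setminus K)$. The Vitushkin approximation lemma (or equivalently Mergelyan's original direct pole-pushing construction using the Laurent expansion of $1/(\zeta-z)$) then produces a rational function $R_j$ with its only pole at $\zeta_j$ and $\|T_j - R_j\|_{L^\infty(K)} \leq C\,\omega(\delta)$, with constants uniform in $j$ and only boundedly many nontrivial terms. Summing gives a rational function $R=\sum_j R_j$ with poles in $\C \setminus K$ that approximates $f$ on $K$ to order $\omega(\delta)$.

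Finally, because $\C \setminus K$ is connected, Runge's theorem lets me push each pole to infinity: for each $j$ with $R_j \not\equiv 0$, choose a polynomial $P_j$ with $\|R_j - P_j\|_{L^\infty(K)} < \varepsilon/(2N)$, where $N$ is the number of such indices. Choosing $\delta$ with $\omega(\delta)$ small enough gives $P := \sum_j P_j$ with $\|f - P\|_{L^\infty(K)} < \varepsilon$. I expect the main obstacle to be the Vitushkin-type estimate in the middle step: a priori, replacing the area integral $T_j$ by a single-pole Cauchy kernel costs $\|T_j\|_\infty$, and the improvement to $O(\omega(\delta))$ is what uses the specific structure of $T_j$ (holomorphic off a disk, vanishing at infinity, with a controlled $1/(\zeta-z)$-moment) together with the fact that $D_j \cap (\C\setminus K)$ has sufficient continuous analytic capacity. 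That quantitative localization lemma, exploiting the uniform continuity of $f$ on $K$, is the technical heart of the proof.
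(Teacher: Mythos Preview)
The paper does not prove this statement at all: Theorem \ref{Mergelyantheorem} is simply recalled as a classical result, with a citation to Mergelyan's 1951 paper, under the heading ``We first recall some classical results.'' There is nothing to compare your argument against.

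That said, your outline is the standard modern route to Mergelyan's theorem (Tietze extension, mollification, Cauchy--Green representation, Vitushkin localization with a bounded-overlap partition of unity, single-pole rational approximation of each localized piece, and Runge's theorem to push poles to infinity). The sketch is essentially correct, and you have identified the genuinely delicate point: the quantitative estimate showing that each localized piece $T_j$ can be approximated on $K$ by a rational function with a single pole in $D_j \cap (\C\setminus K)$ with error $O(\omega(\delta))$, uniformly in $j$. This step does require more than the crude bound $\|T_j\|_\infty \le C\omega(\delta)$; one needs the first two moments of $T_j$ at infinity and a lower bound on the analytic capacity (or a direct geometric argument using a disk or segment in $D_j \setminus K$) to construct the approximating rational function. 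You should be aware that making this precise is nontrivial and is exactly where Mergelyan's original argument, or the Vitushkin machinery, does real work.
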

 


In order to prove Theorem \ref{Weierstrass ab}, we first prove

\begin{theorem}\label{Weierstrassetensio}
Let $\gamma$ be a Lipschitz graph over $[a,b]$. Suppose that $\varphi$ is measurable on $\gamma$ and that
$$\int_{\gamma} e^{-\varphi}ds<\infty.$$
Then polynomials are dense in $L^2(\gamma, \varphi).$
\end{theorem}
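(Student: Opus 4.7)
The plan is to carry out a standard three-step density argument: truncate to reduce to bounded functions, use Lusin's theorem on $\gamma$ to pass from bounded measurable to continuous, and then quote Lavrent'ev's theorem (Theorem \ref{Rungetheorem}) to pass from continuous to polynomial. The essential input that makes the last step work is that $\gamma$ is a Jordan arc: being a Lipschitz graph over $[a,b]$, $\gamma$ is homeomorphic to $[a,b]$, hence it has empty interior in $\C$ and $\C\setminus\gamma$ is connected.

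First, I would reduce to bounded $f$. Given $f\in L^2(\gamma,\varphi)$, set $f_N:=f\cdot \mathbf{1}_{\{|f|\leq N\}}$. Then $|f-f_N|^2 e^{-\varphi}\to 0$ pointwise and is dominated by $|f|^2 e^{-\varphi}\in L^1(\gamma,ds)$, so $f_N\to f$ in $L^2(\gamma,\varphi)$ by dominated convergence.

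Second, I would approximate each bounded $f_N$ by continuous functions. Since $e^{-\varphi}$ is integrable against $ds$ on $\gamma$, the measure $d\mu:=e^{-\varphi}ds$ is a finite Borel measure on the compact metric space $\gamma$; equivalently $ds$ itself is finite on $\gamma$. Applying Lusin's theorem to the bounded measurable function $f_N$ with respect to $ds$, for each $\delta>0$ there is a continuous function $g$ on $\gamma$ with $\|g\|_\infty \leq N$ and with $\{g\neq f_N\}$ contained in a set $E_\delta$ of arc length less than $\delta$. Then
\[
\int_\gamma |f_N-g|^2 e^{-\varphi}ds \leq (2N)^2 \int_{E_\delta} e^{-\varphi}ds,
\]
and by absolute continuity of the integral of $e^{-\varphi}\in L^1(\gamma,ds)$ the right-hand side tends to $0$ as $\delta\to 0$. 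So $f_N$ is approximated in $L^2(\gamma,\varphi)$ by continuous functions.

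Finally, I would approximate any continuous $g$ on $\gamma$ by polynomials. Since $\gamma$ is a Jordan arc in $\C$, $\mathring{\gamma}=\emptyset$ and $\C\setminus\gamma$ is connected, so Theorem \ref{Rungetheorem} (Lavrent'ev) applies: for every $\varepsilon>0$ there is a polynomial $P$ with $\sup_\gamma |g-P|<\varepsilon$, whence
\[
\int_\gamma |g-P|^2 e^{-\varphi}ds \leq \varepsilon^2 \int_\gamma e^{-\varphi}ds,
\]
which is small by the standing hypothesis. Chaining the three approximations via the triangle inequality yields polynomials converging to $f$ in $L^2(\gamma,\varphi)$. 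There is no real obstacle here; the only point one must take care with is that, because the weight $e^{-\varphi}$ may be unbounded (i.e.\ $\varphi$ may go to $-\infty$), one cannot bound the $L^2(\gamma,\varphi)$-norm by a multiple of an $L^\infty$ norm when approximating $f$ by $f_N$, which is why the truncation-then-Lusin order matters and why we use absolute continuity of $\int e^{-\varphi}ds$ rather than a uniform estimate.
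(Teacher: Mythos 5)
Your proposal is correct and follows essentially the same route as the paper: truncate to bounded functions by dominated convergence, reduce to continuous functions using the absolute continuity of $\int e^{-\varphi}\,ds$, and finish with Lavrent'ev's theorem on the Jordan arc $\gamma$. The only cosmetic difference is that you invoke Lusin's theorem for the middle step, whereas the paper carries out the equivalent reduction by hand via simple functions and characteristic functions of graphs over intervals.
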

In order to prove this theorem we need the following Lemma. 

For each nonnegative real valued $f \in L^2(\gamma, \varphi)$, define $f_n=\min \{f,n\}.$ 
\begin{lemma}\label{$L^2$approximation}
$f_n\rightarrow f$ in $L^2(\gamma, \varphi)$ as $n\nearrow \infty$.
\end{lemma}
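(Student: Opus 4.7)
The plan is to deduce this directly from the dominated convergence theorem applied to the sequence $g_n := |f-f_n|^2 e^{-\varphi}$ on $\gamma$ with respect to the arc length measure $ds$.

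First I would observe two pointwise facts that hold at every $x \in \gamma$ where $f(x)$ is finite (which is almost every point, since $f \in L^2(\gamma,\varphi)$ implies $f < \infty$ a.e.). Namely, since $f \geq 0$, we have $0 \leq f_n(x) \leq f(x)$ for all $n$, and $f_n(x) = f(x)$ as soon as $n \geq f(x)$. Consequently $f_n(x) \to f(x)$ as $n \to \infty$ pointwise a.e., and therefore
\[
g_n(x) = |f(x)-f_n(x)|^2 e^{-\varphi(x)} \longrightarrow 0 \quad \text{for a.e. } x \in \gamma.
\]

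Next I would provide the required integrable dominating function. From $0 \leq f_n \leq f$ we get $|f-f_n| \leq f$, hence
\[
g_n(x) \leq |f(x)|^2 e^{-\varphi(x)} =: G(x),
\]
and $G \in L^1(\gamma, ds)$ by the very definition of $f \in L^2(\gamma, \varphi)$. Dominated convergence then yields
\[
\|f-f_n\|_{L^2(\gamma,\varphi)}^2 = \int_\gamma g_n \, ds \longrightarrow 0,
\]
which is precisely the claim. There is no real obstacle here; the lemma is a standard truncation argument and its only purpose is to reduce the proof of Theorem \ref{Weierstrassetensio} to the case of bounded $f$, where one can apply the classical Weierstrass/Mergelyan approximation theorems together with the hypothesis $\int_\gamma e^{-\varphi}\,ds < \infty$ to pass from uniform approximation to $L^2(\gamma,\varphi)$ approximation.
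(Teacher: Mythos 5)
Your proof is correct and follows essentially the same route as the paper: pointwise convergence $|f-f_n|^2 e^{-\varphi}\to 0$ a.e.\ together with the dominating function $|f|^2 e^{-\varphi}\in L^1(\gamma, ds)$, and then the Lebesgue dominated convergence theorem. The only difference is that you spell out the elementary pointwise facts ($0\le f_n\le f$ and $f_n=f$ once $n\ge f(x)$) that the paper leaves implicit.
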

\begin{proof}
We need to prove that $\int_{\gamma} |f-f_n|^2e^{-\varphi}ds\rightarrow 0.$ Now $|f|^2e^{-\varphi}$ is an $L^1$ function and  $|f-f_n|^2e^{-\varphi}\leq  |f|^2 e^{-\varphi}.$ Moreover
$|f-f_n|^2e^{-\varphi}\rightarrow 0$ a.e.. Hence by the Lebesgue dominated convergence theorem, we have that
$$\int_{\gamma} |f-f_n|^2e^{-\varphi}ds \rightarrow 0.$$
\end{proof}
\begin{proof} [Proof of Theorem \ref{Weierstrassetensio}]
Let $f$ be such that $\int_{\gamma} |f|^2e^{-\varphi} ds<\infty.$ To approximate $f$ by polynomials, it suffices to consider the case when $f\geq 0$. If $P$ is any polynomial, then $\|P-f\|_{L^2(\gamma, \varphi)}\leq \|P-f_n\|_{L^2(\gamma, \varphi)}+\|f_n-f\|_{L^2(\gamma, \varphi)}$. So by Lemma \ref{$L^2$approximation}, it suffices to approximate nonnegative bounded measurable functions $f_n$ by polynomials. 

Each bounded measurable function $f_n$ can be uniformly approximated by simple functions on $\gamma$. Since $\int_{\gamma} e^{-\varphi}ds<\infty$, this approximation also holds in the weighted norm of $L^2(\gamma, \varphi)$. 
Each simple function is the finite linear combination of characteristic functions on measurable sets in $\gamma$, hence it suffices to approximate a characteristic function by polynomials on measurable sets in $\gamma$. 
Since $e^{-\varphi}$ is $L^1$ integrable, there exists for any $\varepsilon>0$ a constant $\delta>0$
so that if $E$ is any measurable set in $\gamma$ with Lebesgue measure $|E|<\delta,$ then $\int_E e^{-\varphi}ds<\varepsilon.$
For any measurable set $F$ there exists a finite union of graphs over intervals $I$ so that
$|F\setminus I|,|I\setminus F|<\delta/2.$ Hence $\|\chi_F-\chi_I\|_{L^2(F, \varphi)}<\varepsilon.$
Then it suffices to approximate a characteristic function of a graph over an interval in $[a,b]$ in $\gamma$. 

Let $I$ be a graph over an interval in $\gamma$ and $\delta>0$. There exists  $f\in\mathcal{C}^0(I,\mathbb{C})$ so that 
$$
B:=\lbrace{s\in \C \mid f(s)\neq \chi_{I}(s)}\rbrace  \ \ \  \text{has a Lebesgue measure less than} \ \ \  \frac \delta 8.
$$
So
$$\int_I |f-\chi_I|^2 e^{-\varphi}ds \leq \int_{B} e^{-\varphi}ds \leq  \varepsilon.$$

By the Lavrent'ev theorem, it suffices to approximate a continuous function by polynomials, and then we are done.
\end{proof}

\begin{remark}
Theorem \ref{Weierstrassetensio} holds for non-Lipschitz rectifiable graphs.
\end{remark}

\indent Theorem \ref{Weierstrass ab} generalizes Theorem \ref{Weierstrassetensio} by relaxing the assumption on the integrability of $e^{-\varphi}$ on $\gamma$. To prove this generalization, we need the following result.

\begin{theorem}\label{th:gene1}
Let $\gamma$ be a Lipschitz graph over $[a,b]$ and $\varphi$ a subharmonic function on $\C$. Then polynomials are dense in $L^2(\gamma, \varphi)$ if and only if the function $\sqrt{Q}$ can be approximated arbitrarily well by polynomials in $L^2(\gamma, \varphi)$ where $Q$ is a polynomial vanishing at the points $\gamma(t)$ to order $[\nu(\varphi)(\gamma(t))]$ with Lelong number $\nu(\varphi)(\gamma(t)) \geq 1$. 
\end{theorem}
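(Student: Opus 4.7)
The plan is to reduce the density problem to Theorem \ref{Weierstrassetensio} (the integrable case) by using $\sqrt{Q}$ as a multiplier that absorbs the non-integrable singularities of $e^{-\varphi}$. First I would note that, since $\gamma$ is compact and $\mu:=\frac{1}{2\pi}\Delta\varphi$ has finite mass on compact sets, only finitely many points $z_j=\gamma(t_j)$ on $\gamma$ carry Lelong number $\nu(\varphi)(z_j)\geq 1$, so $Q$ is a polynomial of finite degree with $n_j=[\nu(\varphi)(z_j)]$. The key preliminary estimate would be
$$
\int_\gamma |Q|\, e^{-\varphi}\, ds < \infty.
$$
To prove this, on a neighborhood $U$ of $\gamma$ I would use the Riesz decomposition to write $\varphi=\varphi_0+\log|Q|+h$, where $\varphi_0$ is a subharmonic potential for $\mu_0:=\mu-\sum_j n_j\delta_{z_j}$ and $h$ is harmonic on $U$. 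Since $\mu_0$ has pointwise mass $<1$ everywhere on $\gamma$, I would cover $\gamma$ by finitely many small open sets on each of which $\mu_0$ has total mass $<1$, apply Corollary \ref{cor real} to each subarc, sum, and then absorb the bounded harmonic correction $h$.

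For $(\Rightarrow)$ the argument is immediate: the estimate shows $\sqrt{Q}\in L^2(\gamma,\varphi)$, so density of polynomials trivially yields approximation of $\sqrt{Q}$. For $(\Leftarrow)$, I would set $\tilde\varphi:=\varphi-\log|Q|$ so that $\int_\gamma e^{-\tilde\varphi}\,ds<\infty$; Theorem \ref{Weierstrassetensio} then gives density of polynomials in $L^2(\gamma,\tilde\varphi)$. For any $f\in L^2(\gamma,\varphi)$, the function $g:=f/\sqrt{Q}$ (defined almost everywhere on $\gamma$) would satisfy $\int_\gamma|g|^2 e^{-\tilde\varphi}\,ds=\int_\gamma|f|^2 e^{-\varphi}\,ds<\infty$, so polynomials $P_n$ can be chosen with $P_n\to g$ in $L^2(\gamma,\tilde\varphi)$; by the same identity this is equivalent to $P_n\sqrt{Q}\to f$ in $L^2(\gamma,\varphi)$. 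Then, using the hypothesis to get polynomials $R_m\to\sqrt{Q}$ in $L^2(\gamma,\varphi)$, and exploiting $\|P_n\|_{L^\infty(\gamma)}<\infty$ on the compact arc $\gamma$, the triangle inequality
$$
\|f - P_n R_m\|_{L^2(\gamma,\varphi)} \leq \|f - P_n\sqrt{Q}\|_{L^2(\gamma,\varphi)} + \|P_n\|_{L^\infty(\gamma)}\,\|\sqrt{Q} - R_m\|_{L^2(\gamma,\varphi)}
$$
together with a diagonal choice $m=m(n)$ would produce polynomials $P_n R_{m(n)}\to f$ in $L^2(\gamma,\varphi)$.

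The hard part will be the integrability estimate in the first paragraph: Corollary \ref{cor real} is formulated for a single neighborhood of $\gamma$ with total Riesz mass $<1$, a global condition that need not hold for $\mu_0$. Overcoming this requires a careful finite cover of $\gamma$ by open sets each satisfying the mass bound, available thanks to the pointwise Lelong bound $\nu(\varphi_0)<1$ on $\gamma$ and the regularity of $\mu_0$, and then summing the local estimates. Once this is in place, the rest of the proof is a direct application of Theorem \ref{Weierstrassetensio} combined with the diagonal argument.
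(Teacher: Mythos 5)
Your proposal is correct and follows essentially the same route as the paper: decompose $\varphi=\psi+\log|Q|$ with $\nu(\psi)<1$ on $\gamma$, observe $\sqrt{Q}\in L^2(\gamma,\varphi)$, and reduce the converse direction to Theorem \ref{Weierstrassetensio} applied to $f/\sqrt{Q}$. The integrability estimate you flag as the hard part is exactly what the paper's Theorem \ref{expintreal} (local integrability of $e^{-\psi}$ wherever $\nu(\psi)<1$, combined with compactness of $\gamma$) already supplies, and your explicit $\|P_n\|_{L^\infty(\gamma)}$ bound with the diagonal choice $m=m(n)$ merely spells out a step the paper leaves implicit.
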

Here $[\nu(\varphi)(\gamma(t))]:=\max\lbrace{m\in\mathbb{Z}\mid  \nu(\varphi)(\gamma(t)) \geq m}\rbrace$ is also called the floor function of $\nu(\varphi)$ at $\gamma(t)$.
\begin{proof} 
By Theorem \ref{expintreal} and Theorem \ref{Weierstrassetensio}, it suffices to consider $\nu(\varphi) \geq 1$. Then there exist finitely many points $\gamma(t_1), \cdots, \gamma(t_n)$ such that $\nu(\varphi)(\gamma(t_i))\geq 1$ for $t_i\in[a,b]$, $i=1,\dots, n$ and $Q$ which can be expressed as $Q(z) = \prod_{j=1}^n (z-\gamma(t_i))^{[\nu(\varphi)(\gamma(t_i))]}$. We may then choose a subharmonic function $\psi$ so that $\varphi = \psi + \log |Q|$ with $\nu(\psi)<1$ on $\gamma$. \\
$\Rightarrow)$ Remark that $\sqrt{Q}\in L^2(\gamma, \varphi)$ by Theorem \ref{expintreal}:
$$
\int_\gamma\vert\sqrt{Q}\vert^2e^{-\varphi}ds=\int_\gamma \vert\sqrt{Q}\vert^2 e^{-\psi-\log\vert Q\vert}ds=\int_\gamma e^{-\psi}ds<\infty.
$$
Then by assumption, there exists a polynomial that approximates arbitrarily well $\sqrt{Q}$.\\
$\Leftarrow)$ 
Remark that $f\in L^2(\gamma, \varphi)$ is equivalent to
$$
\int_\gamma \left| \frac{f}{\sqrt{Q}}\right|^2 e^{-\psi} d s  < \infty.
$$
So $\dfrac{f}{\sqrt{Q}}\in L^2(\gamma, \psi)$ and
by Theorem \ref{Weierstrassetensio}, for each $\varepsilon>0$, there exists a polynomial $P$ so that  
$$
\int_\gamma \left| \frac {f}{ \sqrt{Q}} - P \right|^2 e^{-\psi} ds = \int_\gamma |f - P\sqrt{Q}|^2 e^{-\varphi} ds< \varepsilon. 
$$
Thus if $\sqrt{Q}$ can be approximated arbitrarily well by polynomials in $L^2(\gamma, \varphi)$, then $f$ can be approximated by polynomials in $L^2(\gamma, \varphi)$.	
\end{proof}	
\begin{proof} [Proof of Theorem \ref{Weierstrass ab}]
	By Theorem \ref{expintreal} again, it suffices to consider $\nu(\varphi) \geq 1$. Let $Q$ and $\psi$ be as in the proof of Theorem \ref{th:gene1}. 
	We prove that for every $\varepsilon>0$, there is a 
	polynomial $P$ that vanishes at $\gamma(t_i)$ to order $[\nu(\varphi)(\gamma(t_i))]$ so that  
	\begin{equation}\label{estimate111}
	\int_\gamma \left|\sqrt{Q} -P\right|^2 e^{-\varphi}ds =  \int_\gamma \left | 1-  \frac{P}{\sqrt{Q} }\right |^2 e^{-\psi} ds  < \varepsilon.
	\end{equation}
	For convenience, we look for $P$ such as
	$$P(z) = Q(z)  \cdot \widetilde{P}(z) = \prod_{j=1}^n (z-\gamma(t_j))^{[\nu(\varphi)(\gamma(t_j))]} \cdot \widetilde{P}(z).$$
	Then $(\ref{estimate111})$ is equivalent to find some polynomial $\widetilde{P}$ so that 
	\begin{eqnarray}\label{estimate2}
	\int_\gamma \left|1 -  \frac{Q \cdot \widetilde{P}}{\sqrt{Q}}\right|^2 e^{-\psi} ds < \varepsilon.
	\end{eqnarray}
	Let $\delta>0$. Set
	$g(z)= \dfrac{1}{\sqrt{Q(z)}}$ except on the arcs $I_i$ on $\gamma$ with length $2\delta$ and center at $\gamma(t_i)$. We can make $g$ continuous and $\vert \sqrt{Q(z)}g(z)\vert\leq 1$ on such arcs of length $2\delta$.
	Then 
\begin{eqnarray}\label{eq31}
 \int_\gamma \left |1 - \sqrt{Q}g\right\vert^2e^{-\psi}ds & \leq & 4\sum_{i=1}^n\int_{I_i}e^{-\psi}ds\nonumber\\
 & = & 4\int_{\bigcup_{i=1}^n I_i} e^{-\psi}ds.\label{xj}
 \end{eqnarray}
 Since $\bigcup_{i=1}^n I_i$ is a measurable set of measure $2\delta n$ and $e^{-\psi}\in L^1_{\text{loc}}$, we may choose $\delta$ sufficiently small in order for \eqref{xj} to be $<\varepsilon$.
Since $g\in L^2(\gamma, \psi)$, by Theorem \ref{Weierstrassetensio}, there exists a polynomial $A$ satisfying 
	$$
	\int_\gamma|g- A|^2 e^{-\psi} ds < \frac {\varepsilon}{\left(\max\limits_{\gamma(t)\in  \gamma} \vert Q(\gamma(t))\vert\right)^2}. 
	$$
	Then by Cauchy-Schwarz and the previous estimate, 
	\begin{eqnarray}\label{eq412121}
	&& \int_\gamma \vert \sqrt{Q}\vert^2\vert g-A\vert^2 e^{-\psi}ds \\  \nonumber
	&&\leq \left(\int_\gamma \vert \sqrt{Q}\vert^4 \vert g-A\vert^2e^{-\psi}ds\right)^{1/2}\left(\int_\gamma\vert g-A\vert^2e^{-\psi}ds\right)^{1/2},\\ \nonumber
	&&\leq  \left(\max_{\gamma}\vert Q\vert\right)^2 \cdot \int_\gamma \vert g-A\vert^2e^{-\psi}ds,\\ \nonumber
	&&< \varepsilon.
	\end{eqnarray}
Combine \eqref{eq31} and \eqref{eq412121}, we may choose $\widetilde{P} = A$.
\end{proof}	

\begin{remark}
In fact we show that $\mathcal{P} \cap L^2(\gamma, \varphi)$ is dense in $L^2(\gamma, \varphi)$ where $\mathcal{P}$ is the set of all polynomials. If $\nu(\varphi) \geq 1$, not all polynomials are in $L^2(\gamma, \varphi)$, e.g. $1\notin L^2(\gamma, \varphi)$.
\end{remark}

\section{Proof of Theorem \ref{MergelyanC}}
\indent In this section, we give a weighted $L^2$- version of the Mergelyan theorem for compact sets in $\mathbb{C}$ that are the union of bounded Carath\'{e}odory domains and Lipschitz graphs. 
Recall the following theorem\\
\begin{theorem}[Theorem 1.3 in \cite{BFW2019}] \label{th:unit disc}
Let $\Omega \subset \mathbb{C}$ be a Carath\'{e}odory domain and $\varphi$ a subharmonic function in $\mathbb{C}$. Then polynomials are dense in $H^2 (\Omega, \varphi)$.
\end{theorem}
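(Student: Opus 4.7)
The plan mirrors the two-step strategy used in the proof of Theorem \ref{Weierstrass ab}: first reduce to the case $\nu(\varphi)<2$ at every point of $\overline{\Omega}$, then handle this tame case by combining the Carath\'{e}odory geometry of $\Omega$ with Mergelyan's theorem.

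For the reduction, the Riesz measure $\frac{1}{2\pi}\Delta\varphi$ has finite mass on compact sets, so only finitely many points $z_1,\dots,z_N\in\overline{\Omega}$ satisfy $\nu(\varphi)(z_i)=c_i\geq 2$. Choose integers $m_i$ with $(c_i-2)/2<m_i\leq c_i/2$ and set $Q(z)=\prod_{i=1}^N(z-z_i)^{m_i}$. Then $\varphi-\log|Q|^2$ is subharmonic on a neighborhood of $\overline{\Omega}$ with all Lelong numbers strictly less than $2$; Lemma \ref{le:decomposition} provides a globally subharmonic function $\psi$ on $\mathbb{C}$ whose norm on $\overline{\Omega}$ is equivalent to that of $\varphi-\log|Q|^2$. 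The identity $|f|^2 e^{-\varphi}=|f/Q|^2 e^{-\psi}$ together with Theorem \ref{expintcomplex} forces any $f\in H^2(\Omega,\varphi)$ to vanish to order $m_i$ at every interior $z_i$, so $g:=f/Q\in\mathcal{O}(\Omega)\cap L^2(\Omega,\psi)=H^2(\Omega,\psi)$. Hence if polynomials $\widetilde{P}$ approximate $g$ in $H^2(\Omega,\psi)$, then $P:=Q\widetilde{P}$ approximates $f$ in $H^2(\Omega,\varphi)$, reducing everything to the tame case.

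Assume now $\nu(\varphi)<2$ on $\overline{\Omega}$, so $e^{-\varphi}\in L^1(\overline{\Omega})$. Given $f\in H^2(\Omega,\varphi)$ and $\varepsilon>0$, the goal is a polynomial $P$ with $\|f-P\|_{L^2(\Omega,\varphi)}<\varepsilon$. First approximate $f$ in $L^2(\Omega,\varphi)$-norm by some $\tilde f$ holomorphic in an open neighborhood of $\overline{\Omega}$: using the Riemann map $\Phi:\mathbb{D}\to\Omega$, set $f_r(z)=f(\Phi(r\Phi^{-1}(z)))$ for $r<1$; by dominated convergence, $f_r\to f$ in $L^2(\Omega,\varphi)$ as $r\nearrow 1$. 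The Carath\'{e}odory hypothesis enters through a classical theorem of Walsh that yields uniform polynomial approximation of $\Phi$ on $\overline{\mathbb{D}}$, which lets each $f_r$ be realized as the restriction of a function holomorphic on an actual open neighborhood of $\overline{\Omega}$. Second, the Carath\'{e}odory assumption together with simple connectedness of $\Omega$ ensures that $\overline{\Omega}$ has connected complement, so Mergelyan's theorem (Theorem \ref{Mergelyantheorem}) yields a polynomial $P$ with $\sup_{\overline{\Omega}}|\tilde f-P|$ arbitrarily small, and this upgrades to an $L^2(\Omega,\varphi)$-estimate because $\int_\Omega e^{-\varphi}\,dm<\infty$.

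The main obstacle is the first sub-step in the tame case: producing the holomorphic extension of $f_r$ across $\partial\Omega$ and controlling the weighted $L^2$-norm throughout the Riemann-map dilation. The purely topological Carath\'{e}odory property must be converted into a quantitative polynomial approximation statement for $\Phi$ itself, and one must verify that the possibly singular weight $e^{-\varphi}$ does not disrupt the dominated convergence $f_r\to f$ near $\partial\Omega$. Once this passage is made, the remaining steps — Mergelyan approximation of $\tilde f$ and combining the error terms — are standard ingredients of planar approximation theory.
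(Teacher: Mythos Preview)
The paper does not prove this theorem here; it is quoted from \cite{BFW2019}, and the method there is signalled by Proposition~\ref{le:caratheodory}: one approximates $\Omega$ from \emph{outside} by a decreasing sequence of simply connected (Jordan) domains $\Omega_n\supset\overline{\Omega}$, produces $f_n\in H^2(\Omega_n,\varphi)$ with $\|f_n-f\|_{L^2(\Omega,\varphi)}\to 0$, and then applies Mergelyan on $\overline{\Omega_n}$. Your reduction step (factoring out the polynomial $Q$ to force $\nu(\psi)<2$) matches this scheme, but the tame case contains two genuine gaps.

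First, your assertion that ``the Carath\'{e}odory assumption together with simple connectedness of $\Omega$ ensures that $\overline{\Omega}$ has connected complement'' is false. The outer snake in Figure~2 (see also the hypothesis of Theorem~\ref{pr:twocaratheodory} and the caption of Figure~1) is a Carath\'{e}odory domain whose closure has a bounded complementary component; hence Mergelyan's theorem does not apply directly to $\overline{\Omega}$. This is precisely why the paper works on the exhausting Jordan domains $\Omega_n$, whose closures \emph{do} have connected complement, rather than on $\overline{\Omega}$ itself.

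Second, the Riemann-map dilation $f_r(z)=f(\Phi(r\Phi^{-1}(z)))$ does not do what you claim. For a general Carath\'{e}odory domain the boundary need not be locally connected (again the outer snake), so $\Phi$ need not extend continuously to $\overline{\mathbb{D}}$; the appeal to ``Walsh's theorem'' to pass from uniform approximation of $\Phi$ on $\overline{\mathbb{D}}$ to a holomorphic extension of $f_r$ across $\partial\Omega$ is therefore unfounded. Even when $\Phi$ does extend, $f_r$ is only defined on $\Omega$, and you have not produced a dominating function in $L^2(\Omega,\varphi)$ to justify $f_r\to f$. The outer-approximation route of Proposition~\ref{le:caratheodory} avoids all of this by never needing boundary regularity of $\Omega$ itself.
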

One generalization of Theorem \ref{th:unit disc} is the following 
	\begin{theorem}\label{pr:twocaratheodory}
		Let $\Omega_1 \subset \mathbb{C}$ be a Carath\'{e}odory domain, let $\Omega_2$ be another Carath\'{e}odory domain which is inside of a bounded component of $\overline{\Omega_1}^c$. Let $\varphi$ be a subharmonic function in $\mathbb{C}$. Then polynomials are dense in $H^2(\Omega_1\cup\Omega_2, \varphi)$.
	\end{theorem}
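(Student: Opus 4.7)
Since $\Omega_1 \cap \Omega_2 = \emptyset$, the Hilbert space $H^2(\Omega_1 \cup \Omega_2, \varphi)$ splits orthogonally as $H^2(\Omega_1, \varphi) \oplus H^2(\Omega_2, \varphi)$; every $f$ in this space corresponds to a pair $(f_1, f_2)$ of holomorphic functions, one on each component. The plan is to apply Theorem \ref{th:unit disc} on each of the two Carath\'eodory domains separately and to combine the resulting polynomial approximations into a single one, exploiting the geometric separation coming from the bounded component $V$ of $\overline{\Omega_1}^c$ that contains $\overline{\Omega_2}$.

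Concretely, given $\varepsilon > 0$, I would first apply Theorem \ref{th:unit disc} on $\Omega_1$ to produce a polynomial $P$ with $\|f_1 - P\|_{H^2(\Omega_1, \varphi)} < \varepsilon$, then consider the residual $f_2 - P|_{\Omega_2} \in H^2(\Omega_2, \varphi)$ and apply Theorem \ref{th:unit disc} on $\Omega_2$ to produce a polynomial $Q$ approximating it. The candidate $P + Q$ is already close to $f$ on $\Omega_2$ by construction; on $\Omega_1$ it differs from $f_1$ by $Q|_{\Omega_1}$, so the task reduces to controlling $\|Q\|_{H^2(\Omega_1, \varphi)}$. Iterating this alternating correction between $\Omega_1$ and $\Omega_2$ and summing a geometric series of errors produces a convergent sequence of polynomial approximations to $f$, provided that at each step the correcting polynomial is small in the $L^2$-norm on the \emph{other} component.

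The main obstacle is precisely this simultaneous control: Theorem \ref{th:unit disc} applied on $\Omega_2$ alone bounds $Q$ only in $L^2(\Omega_2, \varphi)$, with no a priori bound on $\Omega_1$. The geometric hypothesis is essential to overcome this. Because $\overline{\Omega_2}$ is compactly contained in the bounded component $V$, and the Carath\'eodory condition $\partial \Omega_1 = \partial U$ (with $U$ the unbounded component of $\overline{\Omega_1}^c$) makes $\mathbb{C}\setminus\overline V = \overline{\Omega_1}\cup U$ connected through the shared boundary $\partial\Omega_1 = \partial U$, the compact set $\overline V$ is polynomially convex. The plan is to leverage this polynomial convexity together with the circle of ideas behind Theorem \ref{th:unit disc} in \cite{BFW2019} (smooth cutoff of $f_2 - P|_{\Omega_2}$ extended into $V$, followed by a H\"ormander-type weighted $L^2$-estimate for the $\bar\partial$-equation) to refine $Q$ into a polynomial essentially supported on $\overline V$, thereby making $\|Q\|_{H^2(\Omega_1, \varphi)}$ arbitrarily small and closing the iteration.
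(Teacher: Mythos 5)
There is a genuine gap, and it sits exactly at the step you yourself flag as the main obstacle. Because $\Omega_2$ lies in a bounded component $V$ of $\overline{\Omega_1}^{\,c}$ and $\partial V\subset\overline{\Omega_1}$, the compact set $\overline{\Omega_1}\cup\overline{\Omega_2}$ has \emph{disconnected} complement and its polynomial hull contains all of $\overline V$; by the maximum principle every polynomial $Q$ satisfies $\sup_{\overline{\Omega_2}}|Q|\leq\sup_{\partial V}|Q|\leq\sup_{\overline{\Omega_1}}|Q|$. So a polynomial cannot be ``essentially supported on $\overline V$'': if it is uniformly small on $\overline{\Omega_1}$ it is automatically small on $\Omega_2$, and the polynomial convexity of $\overline V$ alone (which concerns approximation \emph{on} $\overline V$, not smallness \emph{off} it) gives you no handle on $\|Q\|_{L^2(\Omega_1,\varphi)}$. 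The Oka--Weil/Runge circle of ideas you invoke therefore cannot produce the correcting polynomial your iteration needs, and the geometric hypothesis in fact works against the order in which you use it. You also do not address the case $\nu(\varphi)\geq 2$, where not even the constants lie in $H^2$ and one must first factor $\varphi=\psi+2\log|Q|$.

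The paper exploits the geometry in the opposite direction: the hole is \emph{filled in}, not isolated. One first approximates $f$ on $\Omega_2$ by a polynomial $P_1$ (Theorem \ref{th:unit disc}); the residual $g=f-P_1\in H^2(\Omega_1,\varphi)$ is then approximated, via Proposition \ref{le:caratheodory}, by functions $g_{1,n}\in H^2(\Omega_{1,n},\varphi)$ on slightly larger simply connected Jordan domains $\Omega_{1,n}\supset\overline{\Omega_1}$ with $\|g_{1,n}-g\|_{L^2(\Omega_1,\varphi)}$ and $\|g_{1,n}\|_{L^2(\Omega_{1,n}\setminus\Omega_1,\varphi)}$ both small. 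The key point, and the only place the hypothesis on $\Omega_2$ enters, is that a simply connected neighborhood of $\overline{\Omega_1}$ necessarily contains the bounded component $V$, hence $\Omega_2\subset\Omega_{1,n}\setminus\Omega_1$, so $g_{1,n}$ is \emph{automatically} small on $\Omega_2$. A final application of Theorem \ref{th:unit disc} on $\Omega_{1,n}$ yields a polynomial $Q_n$ for which $P_1+Q_n$ approximates $f$ on both pieces simultaneously; no iteration or geometric series is needed. If you want to salvage your outline, you must reverse the order of the two approximations and replace the ``localized polynomial'' step by an extension-with-control statement of the type of Proposition \ref{le:caratheodory}.
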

\begin{center}
	\begin{figure}[H]
		\begin{tikzpicture}
		\draw [blue, domain=0:50,variable=\t,smooth,samples=500]
		plot ({\t r}: {2+2*exp(-0.1*\t)});
		\draw  (0,0) 
		\foreach \j [evaluate={\s=(\j/2000)^2;}] in {1000,...,1800}{ -- (\j:\s)}; 
		\path [fill=white] (0,0) circle (0.33);
		\draw[thick, red] (0,0) circle (0.33);
		\draw[thick, red] (0,0) circle (2);
		\draw[line width=6pt, white] (0,0) circle (0.90);
		\draw (0.5,0.5) node[right] {$\Omega_2$};
		\draw[blue] (2.5,2) node[right] {$\Omega_1$};
		\end{tikzpicture}
		\caption{Example of a situation that might happen in Theorem \ref{pr:twocaratheodory} where $\Omega_1$ and $\Omega_2$ are outer snakes with no common point.}
	\end{figure}
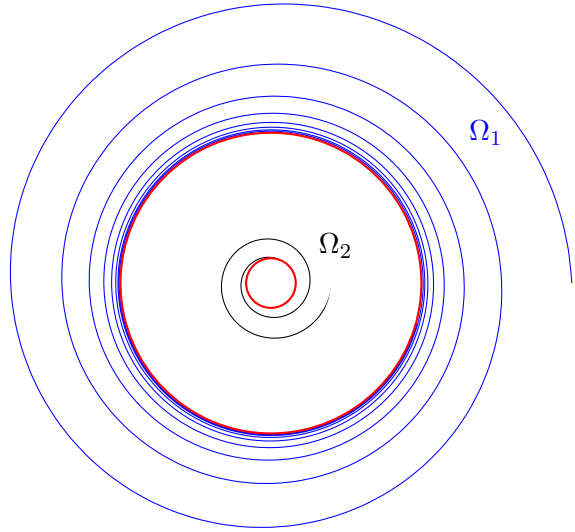
\end{center}

In order to prove this theorem we need the following proposition which can be easily deduced from the proof of  Proposition 1.2 in \cite{BFW2019}.
\begin{proposition} \label{le:caratheodory}
	Let $\Omega$ be a bounded Carath\'{e}odory domain and $\varphi$ a subharmonic function in $\C$. Then for each $f\in H^2(\Omega, \varphi)$ there exist functions $f_n \in H^2(\Omega_n, \varphi)$ such that  $\|f_n -f \|_{L^2( \Omega, \varphi)} \rightarrow 0$ and $\|f_n\|_{L^2(\Omega_n \setminus \Omega, \varphi)} \rightarrow 0$ as $n\rightarrow \infty $, where $\Omega_n\supset \overline{\Omega}$ is a
	sequence of bounded simply-connected domains so that $\partial \Omega_n$ converges to $\partial\Omega$ in the sense of the Hausdorff distance. 
\end{proposition}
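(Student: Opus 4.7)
The strategy is to construct the approximants $f_n$ as compositions of $f$ with a mild holomorphic contraction from $\Omega_n$ into a relatively compact subset of $\Omega$, following the proof scheme of Proposition~1.2 in \cite{BFW2019}. Boundedness of such approximants makes the ``shell'' norm on $\Omega_n\setminus\Omega$ easy to estimate, while the inner approximation on $\Omega$ is governed by Carath\'eodory's kernel convergence theorem.

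I would first build the sequence $\Omega_n$ from the Carath\'eodory property. Since $\mathbb{C}_\infty\setminus\overline{\Omega}$ is a simply-connected unbounded domain whose boundary coincides with $\partial\Omega$, let $\Phi\colon \{|w|>1\}\cup\{\infty\}\to \mathbb{C}_\infty\setminus\overline{\Omega}$ be the normalized exterior Riemann map with $\Phi(\infty)=\infty$, and set $\Omega_n:=\mathbb{C}\setminus\overline{\Phi(\{|w|>1+1/n\})}$. Each $\Omega_n$ is a bounded, simply-connected domain containing $\overline{\Omega}$, and $\partial\Omega_n\to\partial\Omega$ in Hausdorff distance; in particular $\Omega_n\to \Omega$ in the Carath\'eodory kernel sense at any base point $z_0\in\Omega$.

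Next, fix $z_0\in\Omega$ and let $\phi\colon\mathbb{D}\to\Omega$, $\phi_n\colon\mathbb{D}\to\Omega_n$ be the Riemann maps normalized by $\phi(0)=\phi_n(0)=z_0$, $\phi'(0),\phi_n'(0)>0$. By Carath\'eodory's kernel convergence theorem, $\phi_n\to\phi$ and $\phi_n^{-1}\to\phi^{-1}$ locally uniformly on $\mathbb{D}$ and $\Omega$ respectively. For $r\in(0,1)$, define
\[
 f_{r,n}(z):= f\bigl(\phi(r\,\phi_n^{-1}(z))\bigr),\qquad z\in\Omega_n.
\]
Each $f_{r,n}$ is holomorphic on $\Omega_n$ and satisfies $|f_{r,n}|\leq M(r):=\sup_{|w|\leq r}|(f\circ\phi)(w)|<\infty$, so $f_{r,n}\in H^2(\Omega_n,\varphi)$. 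For the shell estimate,
\[
 \|f_{r,n}\|^2_{L^2(\Omega_n\setminus\Omega,\varphi)}\leq M(r)^2 \int_{\Omega_n\setminus\Omega} e^{-\varphi}\,d\lambda\to 0\qquad (n\to\infty),
\]
using that $e^{-\varphi}$ is locally integrable on a neighborhood of $\partial\Omega$ (by Theorem \ref{expintcomplex}, up to isolated points of $\{\nu(\varphi)\geq 2\}$ which can be taken inside $\Omega$) and that $|\Omega_n\setminus\Omega|\to 0$. For the inner approximation, pulling back to $\mathbb{D}$ via $\phi$ rewrites $\|f_{r,n}-f\|^2_{L^2(\Omega,\varphi)}$ as
\[
 \int_\mathbb{D} \bigl|g\bigl(r(\phi_n^{-1}\circ\phi)(w)\bigr)-g(w)\bigr|^2 e^{-\varphi\circ\phi(w)}|\phi'(w)|^2\,dm(w),
\]
where $g:=f\circ\phi$. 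Kernel convergence gives $\phi_n^{-1}\circ\phi\to \mathrm{id}_\mathbb{D}$ locally uniformly, and continuity of the dilation $g\mapsto g(r\cdot)$ in the weighted $L^2(\mathbb{D})$-norm at $r\to 1^-$ then implies this integral vanishes first as $n\to\infty$ and then as $r\to 1^-$. A diagonal extraction produces $r_n\nearrow 1$ with $f_n:=f_{r_n,n}$ satisfying both required properties.

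The main obstacle is the interplay between the diagonal extraction and the possibly singular weight. Applying dominated convergence to the pulled-back integrand $|g(r(\phi_n^{-1}\circ\phi)(w))|^2\,e^{-\varphi\circ\phi(w)}|\phi'(w)|^2$ requires a majorant uniform in both $r\in(0,1)$ and $n$, which in turn depends on local $L^1$-control of $e^{-\varphi}$ near $\partial\Omega$; points where $\nu(\varphi)\geq 2$ form a discrete set by Siu's theorem, and one handles them by placing them inside $\Omega$ and choosing the normalization so that the contractions $\phi\circ(r\,\phi_n^{-1})$ preserve the compensating vanishing of $f$ at such points, keeping $f_{r,n}$ in $H^2(\Omega_n,\varphi)$ and the shell estimate valid for all $n$ large enough that $\Omega_n\setminus\Omega$ avoids the singular locus.
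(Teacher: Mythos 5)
The paper offers no self-contained proof of this proposition (it defers to the proof of Proposition~1.2 in \cite{BFW2019}), and your overall mechanism --- contracting $\Omega_n$ into $\Omega$ through the Riemann maps and invoking Carath\'eodory kernel convergence --- is indeed the one used there. The genuine gap is your treatment of the singular points of the weight. The points of $\overline{\Omega_n}$ with $\nu(\varphi)\geq 2$ (finitely many on a compact set simply because the Riesz measure has finite mass; no appeal to Siu is needed in one variable) are located wherever they happen to be: you cannot ``place them inside $\Omega$'', and if such a point lies on $\partial\Omega$ or in the shell then $\int_{\Omega_n\setminus\Omega}e^{-\varphi}=\infty$ and your shell estimate collapses. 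Worse, even for such a point $z_0\in\Omega$, every $f\in H^2(\Omega,\varphi)$ must vanish at $z_0$ to compensate the non-integrability of $e^{-\varphi}$, whereas $f_{r,n}=f\circ\phi\circ(r\,\phi_n^{-1})$ has no reason to: a conformal contraction fixes at most the base point, so no ``normalization'' can make it preserve the vanishing of $f$ on a prescribed finite set, and $f_{r,n}$ may fail to belong to $L^2(\Omega,\varphi)$ at all. The device this paper uses systematically for exactly this difficulty (Case 3 of the proof of Theorem \ref{MergelyanC}, the second case of Theorem \ref{pr:twocaratheodory}) is to write $\varphi=\psi+2\log|Q|$ with $Q$ a polynomial vanishing at those points and $\nu(\psi)<2$ near $\overline{\Omega_n}$, run the contraction argument on $f/Q\in H^2(\Omega,\psi)$, and multiply back by $Q$; this reduction must precede your construction, after which Theorem \ref{expintcomplex} makes $e^{-\psi}$ locally integrable and the shell estimate legitimate.

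A secondary but real weakness is the inner approximation: you name the uniform-in-$(r,n)$ majorant as ``the main obstacle'' but never produce it, and the convergence of weighted dilates as $r\to 1^-$ is precisely the nontrivial content of the cited proof in \cite{BFW2019}, not a routine continuity statement. The clean way to organize the limit is sequential rather than dominated: for fixed $r<1$ the functions $f_{r,n}$ are uniformly bounded by $M(r)$ and converge locally uniformly to $f_r:=f\circ\phi(r\,\phi^{-1}(\cdot))$ on $\Omega$ as $n\to\infty$, so (with $e^{-\psi}\in L^1_{\mathrm{loc}}$ after the reduction above) $\|f_{r,n}-f_r\|_{L^2(\Omega,\psi)}\to 0$; then $\|f_r-f\|_{L^2(\Omega,\psi)}\to 0$ as $r\to 1^-$ is the statement you should import from \cite{BFW2019}; a diagonal choice $r_n\nearrow 1$ finishes. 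As written, the order of limits and the domination step are asserted rather than proved.
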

\begin{proof}[Proof of Theorem \ref{pr:twocaratheodory}]
First we consider the case
		\begin{eqnarray}
		\nu (\varphi) < 2  \ \ \ \ \ \text{on} \ \ \overline{\Omega}_1\cup\overline{\Omega}_2. 
		\end{eqnarray}		
By Theorem \ref{th:unit disc}, for each $\varepsilon >0$ there exists a holomorphic polynomial $P_1$ so that 

\begin{eqnarray}\label{ineq:caraunioncara12}
\int_{\Omega_2} | f-P_1|^2e^{-\varphi} d\lambda_z < \frac {\varepsilon} {26}.
\end{eqnarray}
Put 
$$
g=\left\{
\begin{aligned}
f-P_1  & & \text{on}  \ \ \Omega_1\\
0 & &  \ \  \text{on}  \ \ \ \Omega_2.
\end{aligned}
\right.
$$
Since $\Omega_i, i =1,2$ is a Carath\'{e}odory domain, there exists a sequence $\{\Omega_{i,n}\}$ of Jordan domains such that $\overline{\Omega_i}\subset \Omega_{i,n}$ and ${\overline{\Omega}_{i,n+1}}\subset \Omega_{i,n}$ and the Hausdorff distance between $\partial \Omega_{i,n}$ and $\partial\Omega_i$ tends to zero as $n\rightarrow \infty$  (see Chapter I, Section 3 of \cite{Gaier80}). Let $n$ be sufficiently large so that $\Omega_1 \cap \Omega_{2,n} = \emptyset$. \\
Since $g\in H^2(\Omega_1, \varphi)$, by using Proposition \ref{le:caratheodory} on $\Omega_1$, for each $\varepsilon>0$ we get for large enough $n$ functions $g_{1,n} \in H^2{(\Omega_{1,n}, \varphi)}$ so that 
\begin{eqnarray}\label{eq:1}
\|g_{1,n} -g \|_{L^2( \Omega_1, \varphi)} < \frac{\varepsilon }{26}
\end{eqnarray}
and 
\begin{eqnarray}\label{eq:2}
 \|g_{1,n}\|_{L^2(\Omega_{1,n} \setminus \Omega_1, \varphi)} < \frac{\varepsilon }{26}.
\end{eqnarray}
By Theorem \ref{th:unit disc}, for each $n$ there exists a polynomial $Q_n$ so that 
\begin{eqnarray}\label{eq:3}
\int_{\Omega_{1,n}} |g_{1,n}-Q_n|^2 e^{-\varphi}d \lambda < \frac{\varepsilon }{26}.
\end{eqnarray}
Thus for sufficiently large $n$ we have
\begin{eqnarray*}\label{convergence1}
	& &\int_{\Omega_1} |f- P_1-Q_n|^2 e^{-\varphi}d\lambda_z \nonumber \\
	&& \leq 2 \int_{\Omega_1} |f- P_1-g_{1,n}|^2 e^{-\varphi}d\lambda_z  + 2\int_{\Omega_1} |g_{1,n}-Q_n|^2 e^{-\varphi}d\lambda_z   \nonumber \\
	&& \leq  2\int_{\Omega_1} |g - g_{1,n} |^2 e^{-\varphi}d\lambda_z +  \frac{\varepsilon}{13}  \nonumber  \ \ \ \text{ by (\ref{eq:3})}\\
	&& \leq   \frac{2\varepsilon}{13}  \nonumber \ \ \ \text{ by( \ref{eq:1})} ;\\
\end{eqnarray*}
and
\begin{eqnarray*}\label{convergence2}
& &\int_{\Omega_2} |f - P_1 - Q_n|^2 e^{-\varphi}d\lambda_z \nonumber \\
&& \leq 2 \int_{\Omega_2} |f- P_1|^2 e^{-\varphi}d\lambda_z  + 2\int_{\Omega_2} |Q_n|^2 e^{-\varphi}d\lambda_z   \nonumber \\
&& \leq  \frac{2\varepsilon}{26} + 4 \int_{\Omega_2} |Q_n - g_{1,n}|^2 e^{-\varphi}d\lambda_z + 4 \int_{\Omega_2} |g_{1,n}|^2 e^{-\varphi}d\lambda_z 
 \nonumber \ \ \ \text{ by (\ref{ineq:caraunioncara12})}  \\
&& \leq \frac{5\varepsilon}{13} \ \ \ \  \text{by (\ref{eq:2}}), (\ref{eq:3}). \nonumber \\ 
\end{eqnarray*}
Next we consider the case $\nu(\varphi) \geq 2$. Then 
there exist finitely many points $z_j \in \overline{\Omega}_1\cup\overline{\Omega}_2$, $1 \leq j \leq N$ so that $\nu (\varphi) \geq 2$ at those points. We may find a polynomial $Q$ with zeros at those points, subharmonic function $\psi$ satisfying $\varphi=\psi + 2\log \vert Q \vert$ and $\nu (\psi) <2$ on $ \overline{\Omega}_1\cup\overline{\Omega}_2$. Let $f\in \mathcal{O}(\Omega_1\cup\Omega_2) \cup L^2 (\Omega_1\cup\Omega_2, \varphi)$, $$\frac{f}{Q} \in \mathcal{O}(\Omega_1\cup\Omega_2) \cap L^2(\Omega_1\cup\Omega_2, \psi).$$
Then by the first case, $\frac{f}{Q}$ can be approximated by a polynomial $P$ in 
$L^2(\Omega_1\cup\Omega_2, \psi)$.
Thus $f$ can be approximated by the polynomial $P \cdot Q$ in $L^2 (\Omega_1\cup\Omega_2, \varphi)$.
\end{proof}
The proof of the main theorem of this section, Theorem 1.2, is divided into 3 cases that correspond to the locus of the zeros of the polynomial $Q$ in the decomposition of the weight function $\varphi=\psi+\log \vert Q\vert$. We will then need the following result:
\begin{theorem}\label{th:simplyconnectedpeak}
	Let $\Omega$ be a Carath\'{e}odory domain and $\varphi$ a subharmonic function in $\mathbb{C}$. Suppose $P$ is a polynomial with $\int _\Omega |P|^2 e^{-\varphi} d \lambda < \infty$. Let $p\in \partial \Omega$ and a disc $\Delta \subset \Omega^c$ with $p\in \partial \Delta$. Then we can approximate $P$ by $\widetilde{P}$ which is holomorphic on a neighborhood of $\overline{\Omega}$ with $\widetilde{P}(p) = 0$ in the norm of $L^2(\Omega, \varphi)$. 
\end{theorem}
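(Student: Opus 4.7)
The plan is to exploit the disc $\Delta$ as the support of a Möbius peak function and then to kill the value of $P$ at $p$ by a high power of that function. Write $\Delta = B(c,r)$, so that $|p-c|=r$, and consider
$$
\phi(z) := \frac{p-c}{z-c}.
$$
The key geometric observation is that, since $\Delta$ is open and disjoint from $\Omega$, no point of $\partial\Omega$ can lie in $\Delta$ (every point of $\partial\Omega$ is a limit of points of $\Omega$), hence $\Delta \subset \mathbb{C}\setminus\overline{\Omega}$. In particular $c$ has positive distance from $\overline{\Omega}$, so $\phi$ is holomorphic on some open neighborhood of $\overline{\Omega}$. On $\overline{\Omega}$ one has $|z-c| \geq r$, so $|\phi| \leq 1$ there with equality exactly along $\partial\Delta$, and $\phi(p) = 1$.

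With $\phi$ in hand, I will take as candidate approximations
$$
\widetilde{P}_n(z) := P(z)\bigl(1 - \phi(z)^n\bigr), \qquad n \in \mathbb{N}.
$$
Each $\widetilde{P}_n$ is holomorphic on the same neighborhood of $\overline{\Omega}$ as $\phi$, and $\widetilde{P}_n(p) = P(p)(1-1) = 0$, so $\widetilde{P}_n$ meets the structural requirements of the statement. The $L^2(\Omega,\varphi)$-error telescopes to
$$
\int_\Omega |P - \widetilde{P}_n|^2 e^{-\varphi}\, d\lambda = \int_\Omega |P|^2 |\phi|^{2n} e^{-\varphi}\, d\lambda.
$$
The integrand is dominated by $|P|^2 e^{-\varphi}$, which is in $L^1(\Omega)$ by the hypothesis $\int_\Omega |P|^2 e^{-\varphi}\, d\lambda < \infty$. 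Pointwise on $\Omega \setminus \partial\Delta$ one has $|\phi(z)|^{2n} \to 0$, and $\partial\Delta$ is a planar Lebesgue null set. Dominated convergence then yields $\|P - \widetilde{P}_n\|_{L^2(\Omega,\varphi)} \to 0$, which is the desired approximation.

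I do not anticipate any serious obstacle: the construction is the classical Möbius peak trick adapted to a tangent disc, and the analytic input is dominated convergence against the fixed integrable majorant $|P|^2 e^{-\varphi}$. The only step deserving a moment's care is the topological claim that $c \notin \overline{\Omega}$ rather than merely $c \notin \Omega$, but as noted this follows immediately from the openness of $\Delta$ and the disjointness $\Delta \cap \Omega = \emptyset$. It is worth noting that nothing in the argument requires $p$ to be the unique point of $\overline{\Omega}\cap \partial\Delta$, since only the measure-zero set $\partial\Delta$ matters in the limit.
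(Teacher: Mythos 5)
Your construction is exactly the paper's: the paper also takes $h_n(z)=\bigl(\tfrac{p-q}{z-q}\bigr)^n$ with $q$ the center of $\Delta$ and sets $\widetilde{P}=(1-h_n)P$, so the candidate approximants coincide with your $\widetilde{P}_n$. Where you differ is only in how the error $\int_\Omega|P|^2|h_n|^2e^{-\varphi}\,d\lambda$ is sent to zero. The paper splits $\Omega$ into a small neighborhood $U(p)$ of $p$, where $\int_{U(p)\cap\Omega}|P|^2e^{-\varphi}\,d\lambda<\varepsilon/2$ and $|h_n|\le 1$, and its complement, where it uses a uniform bound $|h_n|^2<\varepsilon/(2M)$; you instead invoke dominated convergence against the fixed majorant $|P|^2e^{-\varphi}$, using that $|h_1|<1$ off the null set $\partial\Delta$. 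Both finish the proof, but your version is slightly more robust: the paper's uniform smallness of $h_n$ on $\Omega\setminus U(p)$ tacitly requires $|z-q|$ to be bounded away from the radius there, i.e.\ that $p$ is essentially the only contact point of $\partial\Delta$ with $\overline{\Omega}$, whereas your argument needs only that $\partial\Delta$ has planar Lebesgue measure zero, as you note. Your preliminary observation that $\Delta\cap\overline{\Omega}=\emptyset$, so that $h_1$ is holomorphic on a neighborhood of $\overline{\Omega}$ and $|h_1|\le 1$ there, is correct and is the same (implicit) geometric input the paper relies on.
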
 
\begin{proof}
 Set $ M= \int _\Omega |P(z)|^2 e^{-\varphi(z)} d \lambda_z$. Then for each $\varepsilon >0$ there exists a small neighborhood $U(p) $ of $p$ so that 
	$$
	\int_{U(p)\cap \Omega} |P(z)|^2 e^{-\varphi(z)} d\lambda_z  < \frac{\varepsilon}{2}.
	$$
Take $h_n(z) = \left (\frac{p-q}{z-q} \right)^n$, choose $n$ sufficiently large so that 
	\begin{eqnarray*}
	|h_n(z)|^2 < \frac{\varepsilon}{2M} \ \text{on } \ \left(U(p)\right)^c \cap \Omega \ \ \text{and}  \ |h_n(z)|^2<1  \ \text{on}  \ U(p) \cap \Omega.
	\end{eqnarray*}
	Now fix such $n$. Set $\widetilde{P}(z) =(1-h_n(z)) \cdot P(z)$, then $\widetilde{P}$ is holomorphic on a neighborhood of $\Omega$ with $\widetilde{P}(p) = 0$ satisfying 
	\begin{eqnarray*}
		\int_{\Omega} |\widetilde{P}(z) -P(z)|^ 2e^{-\varphi(z)} d\lambda_z
		& =  &\int_{U(p) \cap \Omega} |h_n(z)P(z)|^ 2e^{-\varphi(z)} d\lambda_z + \\&&\int_{\Omega \setminus U(p)} |h_n(z)P(z)|^ 2e^{-\varphi(z)} d\lambda_z \nonumber \\
		& \leq &\max_{z \in U(p)\cap \Omega}|h_n(z)|^2 \cdot \int_{U(p)\cap \Omega} |P(z)|^2 e^{-\varphi(z)} d\lambda_z  \nonumber \\ 
		&& + \max_{z \in \left(U(p)\right)^c \cap \Omega}|h_n(z)|^2 \cdot \int_{\left(U(p)\right)^c \cap \Omega} |P(z)|^2 e^{-\varphi(z)} d\lambda_z  \nonumber \\
		& <& \frac{\varepsilon}{2} +  \frac{\varepsilon}{2}\nonumber \\
		& = & \varepsilon.\nonumber \\
	\end{eqnarray*}
\end{proof}
\begin{remark}
The assumption on the existence of such a disc on the boundary of $\Omega$ is verified when $\partial\Omega$ is $\mathcal{C}^2$.
\end{remark}

\begin{proof} [Proof of Theorem \ref{MergelyanC}]
	First we consider: Case 1:  
	\begin{eqnarray*}
		\nu(\varphi)<1 \ \  \text{on}  \ \  \gamma \ \ \  \text{and} 
		\ \ \  \nu(\varphi) < 2 \ \   \text{on}    \ \  \overline{\Omega}.
	\end{eqnarray*}
	Let $f\in \mathcal{O}(\Omega) \cup L^2 (\Omega \cup \gamma, \varphi)$. By Theorem \ref{th:unit disc}, for each $\varepsilon >0$, there exists polynomial $P_1$ so that                             
\begin{eqnarray} \label{eq:estimateonomega}
	\int _{\Omega} |f(z) - P_1(z)|^2 e^{-\varphi(z)}d\lambda_z < \frac {\varepsilon} {16}. 
\end{eqnarray}
	By the Theorem \ref{Weierstrass ab}, there exists polynomial $P_2$ so that 
	\begin{eqnarray}\label{eq:estimateongamma}
		\int_\gamma |f- P_2| ^2 e^{-\varphi} ds <\frac {\varepsilon} {16}.
	\end{eqnarray}
 Since $\Omega$ is a Carath\'{e}odory domain, there exists a sequence $\{\Omega_j\}$ of bounded simply-connected domains such that $\overline{\Omega}\subset \Omega_j$ and ${\overline{\Omega}_{j+1}}\subset \Omega_j$ and the Hausdorff distance between $\partial \Omega_j$ and $\partial\Omega$ tends to zero as $j\rightarrow \infty$.
By Corollary \ref{cor real}	we may choose $j$ sufficiently large so that 
\begin{eqnarray} \label{caratheodorysmall}
	\max \left \{ \int_{\gamma \cap \Omega_j} |f|^2 e^{-\varphi}ds,  \ \ \ \  \int_{\gamma \cap \Omega_j} \left| P_1\right |^2 e^{-\varphi}ds \right \} < \frac {\varepsilon} {64}. 
\end{eqnarray}
	Now fix such $j$. 
	Let $\chi : \mathbb{C} \rightarrow [0,1]$ be a smooth function with $ \chi \equiv 1 $ on $ \Omega_{j+1}$ and $\chi \equiv 0$ outside of $\Omega_{j}$.
	Set 
	$$
	h(z) =  \chi (z) P_1(z) + (1-\chi(z)) P_2(z).
	$$ 
	Then $h(z)$ is holomorphic on $\Omega$, continuous on $\overline{\Omega} \cup \gamma$. Set $$M = \int_ \Omega e^{-\varphi(z)} d\lambda _z + \int_\gamma e^{-\varphi} ds.$$
	By Mergelyan approximation theorem, there exists a polynomial $P$ so that 
	$$
	|P -h|^2 < \frac {\varepsilon} {16M} \ \ \ \  \ \ \text{on}  \ \ \overline{\Omega} \cup \gamma.
	$$
	Then 
	\begin{eqnarray*}\label{eq41}
	&&\| f-P\|^2_{L^2 (\Omega \cup \gamma, \varphi)}\nonumber\\
	&\leq & 2 \| f-h\|^2_{L^2 (\Omega \cup \gamma, \varphi)} +2 \| h-P\|^2_{L^2 (\Omega \cup \gamma, \varphi)} \nonumber\\
	&  \leq &2  \int _{\Omega}| f(z) - P_1(z) | ^2e ^ {-\varphi (z)} d \lambda_z + 2  \int _{\left(\Omega_j\right)^c \cap \gamma} | f - P_2 | ^2e ^ {-\varphi } d s \nonumber\\
	&& + 2 \int_{\gamma \cap \Omega_j} |f- h|^2e ^ {-\varphi } d s + \frac {\varepsilon}{8} \nonumber\\
	& \leq& 2 \int_{\gamma \cap \Omega_j} |f- \chi \cdot P_1 - (1-\chi)P_2|^2e ^ {-\varphi} d s + \frac {3\varepsilon}{8}\quad\text{by (\ref{eq:estimateonomega}), (\ref{eq:estimateongamma})} \nonumber\\ 
	& \leq &4 \int_{\gamma \cap \Omega_j} |f- P_1|^2\chi^2e ^ {-\varphi} d s \nonumber\\
	&& +  4 \int_{\gamma \cap \Omega_j} |f- P_2|^2(1-\chi)^2e ^ {-\varphi} ds  +  \frac {3\varepsilon}{8} \nonumber\\
	& \leq &8 \int_{\gamma \cap \Omega_j} 
	|f|^2 e^{-\varphi}ds + 8 \int_{\gamma \cap \Omega_j} |P_1|^2 e^{-\varphi}ds + \frac{5\varepsilon}{8} \quad\text{by (\ref{eq:estimateongamma})}\nonumber\\
	& \leq &\frac{7\varepsilon}{8} \quad\text{by (\ref{caratheodorysmall})}. \nonumber\\
	\end{eqnarray*}
	Now we consider Case 2: 
	\begin{eqnarray*}
		1 \leq \nu(\varphi)< 2 \ \  \text{on}  \ \ \gamma \ \ \  \text{and} 
		\ \ \  \nu(\varphi)(z)< 2 \ \ \text{on}  \ \    \overline{\Omega}.
	\end{eqnarray*}
	Then there exist finitely many points $\gamma(t_i) \in \gamma$, $t_i\in [a,b]$,  $1\leq i\leq N$ such that
	\begin{eqnarray*}
		t_1<t_2 <t_3 < \cdots < t_N \quad\text{and}\quad 1 \leq \nu(\varphi)(\gamma(t_i)) < 2,
	\end{eqnarray*}
	a polynomial $Q$ and a subharmonic function $\psi$ satisfying $\varphi=\psi+\log \vert Q\vert$ where $Q$ vanishes only at $\gamma(t_i)$ and $\nu(\psi)<1$ on $\gamma$. Since the polynomial $Q$ has no zeros on $\overline{\Omega}\setminus \{\gamma(a)\}$, $\nu(\psi)(z)= \nu(\varphi)(z)$ for each $z\in \overline{\Omega}\setminus\{\gamma(a)\}$.\\ We need now to distinguish two subcases depending on the nature of $p:=\gamma(a)$:\\
	 Subcase A: If $t_1 =a$.
	Let $f\in \mathcal{O}(\Omega) \cup L^2 (\Omega \cup \gamma, \varphi)$. Then by Theorem \ref{th:unit disc} there exists a polynomial $P_1$ satisfying 
\begin{eqnarray*} 
	\left \| f - P_1 \right \|_{L^2(\Omega, \varphi )} < \varepsilon.
\end{eqnarray*}
	Since the boundary of $\Omega$ is $\mathcal{C}^2$ near $p$, by Theorem \ref{th:simplyconnectedpeak}, we may choose a holomorphic function $\mathcal{P}_1$ on $\Omega_j$ satisfying
	\begin{eqnarray}\label{eq:withoutzeropoint} 
	\mathcal{P}_1(p)=0  \ \ \ \text{and}  \ \ \ \  \left \| f -  \mathcal{P}_1 \right \|_{L^2(\Omega, \varphi )} < \frac{\varepsilon}{16}.
	\end{eqnarray}
	By the construction in the proof of Theorem \ref{Weierstrass ab}, there exists a polynomial $P_2$ so that 
	\begin{eqnarray}\label{ineq:gamma}
	\int_\gamma |f- P_2\cdot Q| ^2 e^{-\varphi} ds <\frac {\varepsilon} {16}.
\end{eqnarray}
	Set $\displaystyle{M=\int_{\Omega} e^{-\psi(z)}d\lambda_z +  \int_\gamma e^{-\psi}ds.}$ 
	Since $f\in \mathcal{O}(\Omega) \cup L^2 (\Omega \cup \gamma, \varphi)$, by (\ref{eq:withoutzeropoint}) we may choose $j$ sufficiently large so that $\gamma(t_i) \in (\Omega_j)^c$, $2\leq i \leq N$ and 
	\begin{eqnarray} \label{caratheodorysmall2}
	\max \left \{ \int_{\gamma \cap \Omega_j} |f|^2 e^{-\varphi}ds,  \ \ \ \  \int_{\gamma \cap \Omega_j} \left| \mathcal{P}_1\right |^2 e^{-\varphi}ds \right \} < \frac {\varepsilon} {64}. 
	\end{eqnarray}
	Now fix such $j$. 
	Choose $\chi$ be as above and
	$$
	h(z) =  \chi (z) \mathcal{ P}_1(z) + (1-\chi(z)) P_2(z) Q(z).
	$$ 
	Then $h$ and $\frac{h}{Q}$ are holomorphic on $\Omega_j$, continuous on $\overline{\Omega_j} \cup \gamma$. By Theorem \ref{Mergelyantheorem} there exists a polynomial $G$ so that 
	$$
	\left |\frac hQ-G \right |^2 < \frac {\varepsilon} {32 M \cdot  \max\limits_{\overline{\Omega_j} \cup \gamma }|Q|} \ \ \text{on }  \ \ \ \overline{\Omega} \cup \gamma .
	$$
	Then 
	\begin{eqnarray*}\label{eq:Omegaunionarc}
	&&\| f-G \cdot Q\|^{2}_{L^2 (\Omega \cup \gamma, \varphi)} \nonumber\\
	&  = & \left \| \frac{f}{\sqrt {Q}}-G \cdot \sqrt{Q} \right \|^{2}_{L^2 (\Omega \cup \gamma, \psi)}  \nonumber\\
	& =&   \left \| \frac{f}{\sqrt {Q}}- \frac{h}{\sqrt {Q}} + \frac{h}{\sqrt {Q}} - G \cdot \sqrt{Q} \right \|^{2}_{L^2 (\Omega \cup \gamma, \psi)}  \nonumber\\
	& \leq &2 \left \| \frac{f}{\sqrt {Q}}- \frac{h}{\sqrt {Q}}  \right \|^{2}_{L^2 (\Omega \cup \gamma, \psi)}   + 2 \left \|\frac{h}{\sqrt {Q}} - G \cdot \sqrt{Q} \right \|^{2}_{L^2 (\Omega \cup \gamma, \psi)}  \nonumber\\
	& \leq& 2 \| f-h\|^2 _{L^2 (\Omega \cup \gamma, \varphi)} + 2M \cdot \max_{\overline{\Omega} \cup \gamma } |Q| \cdot \max _ {\overline{\Omega} \cup \gamma } \left| \frac h Q  -G\right |^2 \nonumber\\
	&  \leq& 2  \int _{\Omega}\left | f(z)- \mathcal{P}_1(z) \right | ^2e ^ {-\varphi (z)} d \lambda_z  + 2  \int _{\gamma \cap (\Omega_{j})^c} \left | f -P_2\cdot  Q \right | ^2e ^ {-\varphi } d s  \nonumber\\
	&&  + 2  \int_{\gamma \cap (\Omega_{j} \setminus \Omega)} \left | f - h \right | ^2e ^ {-\varphi} d s + \frac {\varepsilon}{16} \nonumber\\
	& \leq& 2 \int_{\gamma \cap \Omega_{j}} \left|f- \chi  \mathcal{P}_1  - (1-\chi)P_2 \cdot Q \right|^2e ^ {-\varphi } ds + \frac {5\varepsilon}{16} \quad \text{by (\ref{eq:withoutzeropoint}), (\ref{ineq:gamma})} \nonumber\\
	& \leq& 4 \int_{\gamma \cap \Omega_{j} }  \left|f -\mathcal{P}_1 \right|^2\chi^2e ^ {-\varphi} ds + 4 \int_{\gamma \cap \Omega_{j} }  \left |f- P_2\cdot Q\right|^2(1-\chi)^2e ^ {-\varphi} d s  +  \frac {5\varepsilon}{16} \nonumber\\
	& \leq& 8 \int_{\gamma \cap \Omega_{j}}  
	|f|^2 e^{-\varphi}ds + 8 \int_{\gamma \cap \Omega_{j} }  \left|\mathcal{P}_1\right|^2 e^{-\varphi}ds + \frac{9\varepsilon}{16} \quad \text {by (\ref{ineq:gamma})}\nonumber\\
	& \leq & \frac{13\varepsilon}{16} \quad  \text{by (\ref{caratheodorysmall2})}.\nonumber\\
	\end{eqnarray*}
Subcase B: If $\gamma(t_1)\neq p$. We may choose $j$ sufficiently large so that $\gamma(t_i) \in (\Omega_j)^c \cap \gamma, 1\leq i \leq N$ and the formula (\ref{caratheodorysmall2}) also holds. Then the following proof is similar to subcase A.\\
	
	Finally we consider:  Case 3: $\nu (\varphi)\geq 2$. There exist finitely many points $z_j \in \overline{\Omega}$, $\gamma(t_j) \in \gamma$ so that $\nu (\varphi)\geq 2$ at those points. We may find a polynomial $Q_1$ with zeros at those points, subharmonic function $\psi$ satisfying $\varphi=\psi+ 2\log \vert Q_1 \vert$ and $\nu (\psi) <2$ on $ \overline{\Omega} \cup \gamma$. Let $f\in \mathcal{O}(\Omega) \cup L^2 (\Omega \cup \gamma, \varphi)$, $$\frac{f}{Q_1} \in \mathcal{O}(\Omega) \cap L^2(\Omega \cup \gamma, \psi).$$
	Then by Case 1 or 2, $\frac{f}{Q_1}$ can be approximated by a polynomial $P$ in $L^2(\Omega \cup \gamma, \psi)$.
	Thus $f$ can be approximated by the polynomial $P \cdot Q_1$ in $L^2 (\Omega \cup \gamma, \varphi)$.
\end{proof}

\section{Proof of Theorem  \ref{L2Carleman}}

The classical Carleman approximation theorem applies to continuous function $f$ on $\mathbb R.$ Let $\varepsilon(x) >0$ be a continuous function.
\begin{theorem} [\cite{Carleman}]
	
	There exists an entire function $F$ so that $|F(x)-f(x)|<\varepsilon(x)$ on $\R$.
\end{theorem}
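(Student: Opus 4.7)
The plan is to prove this classical Carleman theorem by constructing $F$ as the uniform-on-compacta limit of polynomials obtained by iterated Mergelyan-type approximation on an exhaustion of $\mathbb{R}$.

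First, I would fix the exhaustion $K_n = [-n,n]$ and choose tolerances $\delta_n > 0$ with $\sum_{k \ge n}\delta_k < \tfrac12 \inf_{K_n}\varepsilon$. Starting from $F_0 \equiv 0$, I inductively construct polynomials $F_n$ satisfying
\begin{eqnarray*}
\text{(i) } |F_n(x) - f(x)| < \tfrac12 \varepsilon(x) \text{ on } K_n, \qquad \text{(ii) } |F_n(x) - F_{n-1}(x)| < \delta_n \text{ on } K_{n-1}.
\end{eqnarray*}
Passing to the limit $F = \lim F_n$ then yields an entire function with $|F - f| \le \varepsilon$ on $\mathbb{R}$; replacing $\varepsilon$ by a strictly smaller continuous positive function at the outset promotes $\le$ to strict $<$.

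The inductive step is the crux. Given $F_{n-1}$, I would produce a polynomial correction $Q_n$ that is close to $f - F_{n-1}$ on the new collar $K_n \setminus K_{n-1}$ yet very small on $K_{n-1}$. The tool is the Lavrent'ev theorem (Theorem \ref{Rungetheorem}) applied to the compact set $E_n = K_{n-1} \cup (K_n \setminus V_n)$, where $V_n$ is a thin open $\mathbb{C}$-neighborhood of $K_{n-1}$. On $E_n$, define a continuous function that equals $0$ on $K_{n-1}$ and $f - F_{n-1}$ on $K_n \setminus V_n$. Since $E_n \subset \mathbb{R}$ has empty interior and connected complement in $\mathbb{C}$, Lavrent'ev supplies a polynomial $Q_n$ that is $\delta_n$-small on $K_{n-1}$ and within $\delta_n$ of $f - F_{n-1}$ on $K_n \setminus V_n$. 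Setting $F_n = F_{n-1} + Q_n$ then gives (ii) on $K_{n-1}$ and good approximation of $f$ on $K_n \setminus V_n$; the residual transition strip $V_n \cap (K_n \setminus K_{n-1})$ is kept narrow and absorbed at the next stage, when $K_n$ becomes the ``interior'' of the new construction.

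The main obstacle is the coordinated choice of the neighborhoods $V_n$, tolerances $\delta_n$, and the portion of $K_n$ that is to be fully approximated at stage $n$, so that in the limit every $x \in \mathbb{R}$ is eventually captured with residual error at most $\varepsilon(x)$. Once this bookkeeping is arranged, convergence of $\{F_n\}$ to an entire $F$ follows from $\sum\delta_k<\infty$, and the telescoping bound $|F - F_n| \le \sum_{k>n}\delta_k < \tfrac12 \inf_{K_n}\varepsilon$ on $K_n$, combined with $|F_n - f| < \tfrac12 \varepsilon$ on $K_n$, yields $|F - f| < \varepsilon$ on $\mathbb{R}$. A slicker but heavier alternative would be to invoke Arakelyan's theorem on the closed set $\mathbb{R} \subset \mathbb{C}$ together with its variable-error refinement due to Nersesyan, but the elementary iterative scheme above keeps the argument self-contained and uses only tools already appearing in this paper.
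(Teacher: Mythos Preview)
The paper does not prove this theorem: it is stated with a citation to Carleman's 1927 paper and used as a black box, so there is no ``paper's own proof'' to compare against.

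Your outline is the standard iterative scheme and is correct in spirit, but as written there is a genuine gap between the construction and property (i). Applying Lavrent'ev on $E_n = K_{n-1}\cup(K_n\setminus V_n)$ to the function that is $0$ on $K_{n-1}$ and $f-F_{n-1}$ on $K_n\setminus V_n$ gives you control of $F_n-f$ only on $E_n$; on the transition collar $V_n\cap(K_n\setminus K_{n-1})$ you have no bound at all, so (i) on $K_n$ does not follow. Saying this collar is ``absorbed at the next stage'' is not enough, because your final telescoping estimate uses (i) on $K_n$ at stage $n$, not at stage $n+1$.

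The clean fix is to exploit the inductive hypothesis: since $|F_{n-1}-f|<\tfrac12\varepsilon$ already holds on $K_{n-1}$, the function $f-F_{n-1}$ is small near $\partial K_{n-1}$, so you can define a single continuous function $g_n$ on all of $K_n$ that equals $0$ on a slightly shrunken $K_{n-1}'\subset K_{n-1}$, equals $f-F_{n-1}$ on $K_n\setminus K_{n-1}$, and interpolates continuously on the thin gap $K_{n-1}\setminus K_{n-1}'$ where both values are already small. Lavrent'ev then gives a polynomial $Q_n$ close to $g_n$ on all of $K_n$, and (i), (ii) follow directly with no uncontrolled strip. Alternatively, weaken (i) to hold on $K_{n-1}$ rather than $K_n$ and adjust the final estimate accordingly; either route closes the gap.
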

This theorem is equivalent to the following corollary
\begin{corollary}
	Let $f$ be a continuous function on $\mathbb R$. For any $\{ \varepsilon_n\}_{n=-\infty}^{\infty} $ with $\varepsilon_n >0$, there exists an entire function $F$ so that for each $n$,
	\begin{eqnarray*}
		|F(x)-f(x)| < \varepsilon_n,  \ \ \ \ \ \forall x \in [n, n+1].
	\end{eqnarray*}
\end{corollary}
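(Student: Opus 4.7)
The corollary restates Carleman's classical approximation theorem (quoted just above) with the continuous positive tolerance $\varepsilon(x)$ replaced by a sequence of positive constants $\varepsilon_n$ indexed by the integer intervals $[n,n+1]$. My plan is to deduce it as a direct consequence of the classical version, by manufacturing a continuous positive function $\varepsilon\colon\R\to\R_{>0}$ dominated by $\varepsilon_n$ on each $[n,n+1]$ and then applying Carleman to $f$ with tolerance $\varepsilon$.

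To build the tolerance, I would set $a_n := \tfrac12\min(\varepsilon_{n-1},\varepsilon_n)$ at each integer $n$; this is strictly positive. Let $\varepsilon(x)$ be the piecewise linear interpolant through the nodes $\{(n, a_n)\}_{n\in\Z}$. Then $\varepsilon$ is continuous and strictly positive on $\R$, and for $x\in[n,n+1]$ we have
\[
\varepsilon(x) \leq \max(a_n, a_{n+1}) \leq \tfrac12\varepsilon_n,
\]
since both $a_n$ and $a_{n+1}$ are, by construction, bounded above by $\tfrac12\varepsilon_n$.

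Applying the classical Carleman theorem to $f$ with this continuous positive tolerance $\varepsilon(x)$ produces an entire function $F$ with $|F(x)-f(x)|<\varepsilon(x)$ for every $x\in\R$. Restricting to $x\in[n,n+1]$ yields $|F(x)-f(x)|<\tfrac12\varepsilon_n<\varepsilon_n$, which is the desired conclusion.

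No genuine obstacle arises: the whole content of the corollary is the elementary construction of a continuous positive $\varepsilon(x)$ from the sequence $\{\varepsilon_n\}$, and any such interpolation with $\varepsilon(x)<\varepsilon_n$ on $[n,n+1]$ would do. The converse direction, which together with the above establishes the claimed equivalence of the two formulations, is also immediate: given a continuous positive $\varepsilon(x)$, set $\varepsilon_n := \min_{x\in[n,n+1]}\varepsilon(x) > 0$ (attained by compactness and continuity) and invoke the corollary to recover the classical statement.
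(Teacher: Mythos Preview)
Your argument is correct. The paper does not actually supply a proof of this corollary; it merely states the result and asserts that it is equivalent to the classical Carleman theorem quoted immediately before it. Your piecewise-linear interpolation to produce a continuous positive tolerance $\varepsilon(x)$ dominated by $\varepsilon_n$ on each $[n,n+1]$, followed by an application of the classical theorem, is exactly the standard way to justify this equivalence, and your converse direction is likewise the obvious one.
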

It was pointed out by Alexander \cite{Alexander} that Carleman's proof actually gives 
\begin{theorem}[\cite{Carleman, Alexander}]\label{th:Carleman, Alexander}
	If $\gamma: \R \rightarrow \C$, $\gamma$ is a locally rectifiable curve and properly embedded, then for each continuous function $f$ on $\gamma$ and continuous function $\varepsilon>0$, there exists an entire function $F$ so that $|F-f|<\varepsilon$ on $\gamma$.
	\end{theorem}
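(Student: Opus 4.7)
The plan is a weighted $L^2$ version of the classical Carleman iteration: construct polynomials $P_N$ converging locally uniformly on $\C$ to the desired entire function $F$, invoking Theorem \ref{MergelyanUC11} at each step to simultaneously approximate the previous polynomial on a growing compact set and the target function $f$ on the newly added pieces of $\Gamma$.

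Setup. For each $N\geq 1$, choose a smooth Carath\'eodory domain $\Omega_N$ (e.g.\ a rectangle $[-N,N]\times[-H_N,H_N]$ with $H_N$ large enough that $\gamma_N:=\Gamma|_{[-N,N]}\Subset\Omega_N$), and let $\sigma_N^+:=\Gamma|_{[N,N+1]}=\Gamma_N$ and $\sigma_N^-:=\Gamma|_{[-N-1,-N]}=\Gamma_{-N-1}$ be the two new Lipschitz-graph pieces added at the next step. The compact set $K_{N+1}:=\overline{\Omega_N}\cup\sigma_N^+\cup\sigma_N^-$ satisfies the hypotheses of Theorem \ref{MergelyanUC11}: each $\sigma_N^\pm$ is a Lipschitz graph meeting $\overline{\Omega_N}$ at a single interior point of a smooth (straight) side, and $\C\setminus K_{N+1}$ is connected.

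Iterative construction. Starting from $P_1$ provided by Theorem \ref{Weierstrass ab} applied to $f|_{\gamma_1}$, define
$$h_{N+1}:=P_N\ \text{on}\ \overline{\Omega_N},\qquad h_{N+1}:=f\ \text{on}\ \sigma_N^+\cup\sigma_N^-.$$
After the Lelong-number reduction $\varphi=\psi+c\log|Q_N|$ of Case 3 in the proof of Theorem \ref{MergelyanC}, $h_{N+1}\in H^2(K_{N+1},\varphi)$. Theorem \ref{MergelyanUC11} yields a polynomial $P_{N+1}$ with
$$\int_{\Omega_N}|P_{N+1}-P_N|^2 e^{-\varphi}\,d\lambda+\int_{\sigma_N^+\cup\sigma_N^-}|P_{N+1}-f|^2 e^{-\varphi}\,ds<\eta_{N+1}^2$$
for any prescribed $\eta_{N+1}>0$. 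Since $\varphi$ is locally bounded above, the area bound together with the sub-mean-value inequality applied to the entire function $P_{N+1}-P_N$ yields
$$\sup_{\overline{\Omega_{N-1}}}|P_{N+1}-P_N|\leq C_N\,\eta_{N+1},$$
where $C_N$ depends on $\mathrm{dist}(\overline{\Omega_{N-1}},\partial\Omega_N)$ and $\sup_{\Omega_N}\varphi$. We select $\eta_{N+1}$ so that $C_N\eta_{N+1}<2^{-N}$ and $\eta_{N+1}<\tfrac14\sqrt{\min\{\varepsilon_{N-1},\varepsilon_{-N}\}}$.

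Convergence and final estimate. The uniform bound $\sup_{\overline{\Omega_{N-1}}}|P_{N+1}-P_N|<2^{-N}$ implies $F:=\lim_N P_N$ exists locally uniformly and is entire. Fix $n\in\Z$ and let $N_0=N_0(n)$ be the step at which $\Gamma_n$ first appears as one of the pieces $\sigma_{N_0-1}^\pm$; by construction $\|P_{N_0}-f\|_{L^2(\Gamma_n,\varphi)}<\eta_{N_0}$. For $N\geq N_0$, $\Gamma_n\Subset\overline{\Omega_N}$, so
$$\|P_{N+1}-P_N\|_{L^2(\Gamma_n,\varphi)}\leq\sup_{\Gamma_n}|P_{N+1}-P_N|\cdot\sqrt{\textstyle\int_{\Gamma_n}e^{-\varphi}\,ds}\leq 2^{-N}C_n,$$
where $C_n<\infty$ by Corollary \ref{cor real} (after the Lelong reduction). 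Telescoping the triangle inequality gives
$$\|F-f\|_{L^2(\Gamma_n,\varphi)}\leq\|P_{N_0}-f\|_{L^2(\Gamma_n,\varphi)}+\sum_{N\geq N_0}\|P_{N+1}-P_N\|_{L^2(\Gamma_n,\varphi)}\leq\eta_{N_0}+C_n\cdot 2^{-N_0+1},$$
which by the choice of $\eta$'s is strictly less than $\sqrt{\varepsilon_n}$, proving $\int_{\Gamma_n}|F-f|^2 e^{-\varphi}\,ds<\varepsilon_n$.

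The principal technical obstacle is the compatibility of the Lelong-number reduction with the iteration: the factoring polynomial $Q_N$ used to bring $h_{N+1}$ into $H^2(K_{N+1},\varphi)$ depends on the (finitely many) singular points of $\varphi$ inside $K_{N+1}$, but these accumulate as $N\to\infty$, so the approximants $P_{N+1}$ must consistently inherit the correct vanishing at the growing set of singular points. Together with the requirement that $\eta_N$ decay faster than the possibly growing constants $C_N$ and $\sup_{\Omega_N}\varphi$, this forces a delicate diagonal-style choice of the tolerance sequence, but at each step only finitely many constraints are active, so the construction goes through.
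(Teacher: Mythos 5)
Your proposal proves the wrong statement. The theorem in question is the \emph{classical} Carleman--Alexander theorem: uniform pointwise approximation, $|F-f|<\varepsilon$ with $\varepsilon(x)$ a continuous positive function, of an arbitrary continuous $f$ on an arbitrary properly embedded, locally rectifiable curve $\gamma$ --- no weight, no $L^2$ norm, no Lipschitz hypothesis. The paper does not prove this result at all; it cites \cite{Carleman, Alexander} and uses it as a black-box ingredient in the proof of Theorem \ref{L2Carleman}. What you construct is an entire $F$ with $\int_{\Gamma_n}|F-f|^2e^{-\varphi}\,ds<\varepsilon_n$ on a locally Lipschitz graph, i.e.\ the conclusion of Theorem \ref{L2Carleman} --- so your logical direction is the reverse of the paper's, and the target statement is untouched. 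Nor can your route be adapted: weighted $L^2(e^{-\varphi}ds)$-smallness on an arc gives no pointwise control of $F-f$ when $f$ is merely continuous (there is no mean-value device along a one-dimensional set), and the Mergelyan-type theorems you invoke (Theorems \ref{MergelyanC}, \ref{MergelyanUC11}, \ref{Weierstrass ab}) all require Lipschitz graphs, whereas $\gamma$ here is only locally rectifiable --- Section 6 of the paper shows the weighted machinery genuinely breaks down beyond the Lipschitz setting.

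Even read charitably as a blind proof of Theorem \ref{L2Carleman}, the iteration fails at exactly the point you flag and then dismiss. Once a piece $\Gamma_n$ has been absorbed into the interior region $\Omega_N$, your only control on $P_{N+1}-P_N$ there is an area-$L^2$ bound and the resulting sup bound on compacts. But if $\varphi$ has a singular point $\Gamma(t_j)\in\Gamma_n$ with $1\leq\nu(\varphi)(\Gamma(t_j))<2$, then $e^{-\varphi}$ is area-integrable near that point while $\int_{\Gamma_n}e^{-\varphi}\,ds=\infty$ by Theorem \ref{expintreal}; consequently your estimate
$\|P_{N+1}-P_N\|_{L^2(\Gamma_n,\varphi)}\leq\sup_{\Gamma_n}|P_{N+1}-P_N|\cdot\bigl(\int_{\Gamma_n}e^{-\varphi}\,ds\bigr)^{1/2}$
is vacuous, and Corollary \ref{cor real} ``after the Lelong reduction'' only bounds $\int e^{-\psi}\,ds$, which controls $\int|P_{N+1}-P_N|^2e^{-\varphi}\,ds$ only if $P_{N+1}-P_N$ vanishes to the right order at the zeros of $Q$. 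Nothing in area-$L^2$ polynomial approximation forces the later polynomials $P_{N'}$ to vanish at the on-curve singular points of earlier stages (area integrability with $\nu<2$ imposes no such constraint), so the telescoping sum $\sum_N\|P_{N+1}-P_N\|_{L^2(\Gamma_n,\varphi)}$ can contain infinite terms. This is precisely why the paper performs the Lelong reduction \emph{once, globally}, with an entire Weierstrass-product $Q$ vanishing at all singular points of $\Gamma$, works with $f/\sqrt{Q}$ and the reduced weight $\psi$, and then invokes the uniform Theorem \ref{th:Carleman, Alexander} (twice: for a continuous $g$ glued by a partition of unity, and for $g=1/\sqrt{Q}$) to produce $F\cdot Q\cdot A$ --- rather than attempting a polynomial-exhaustion scheme of the kind you propose.
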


We prove below Theorem \ref{L2Carleman} which is a weighted $L^2$- version of this generalization for Lipschitz graphs.

Let $\Gamma$ be the graph of a locally Lipschitz function over the real axis in $\C$. We may assume $\Gamma = \{(t, \phi(t)) \}$ with $\phi: \R \rightarrow \R$ a locally Lipschitz continuous function. 
For each $[n, n+1]$, $\Gamma_n: =\{ (t, \phi(t))|\; n\leq t \leq n+1 \}$ is a Lipschitz graph. 
\begin{proof}[Proof of Theorem \ref{L2Carleman}]
	 Case 1: $\nu(\varphi)< 1$ on $\Gamma$. Let $f \in L^2(\Gamma, \varphi)$. By Theorem \ref{Weierstrassetensio}, 
	there exists a continuous function $g_n$ on $J_n: = \{(t,\phi(t)) | \; n-1 \leq t \leq  n+2 \}$ so that 
	$$
	\int_{J_n} |f-g_n|^2e^{-\varphi}ds < \frac{1}{40} \min \{\varepsilon_{n-1}, \varepsilon_{n}, \varepsilon_{n+1} \} \leq \frac{1}{40} \varepsilon_{n}.
	$$	
Choose a partition of unity $\{ \chi _n \}_{n\in \Z}$ of $\Gamma$ so that $\chi_n \geq 0$ on $\Gamma$, $\chi_n =0$ outside of $J_n$ and $\sum_n\chi_n =1 $ on $\Gamma$. Let $g = \sum _n \chi_n g_n$. Then $g$ is continuous on $\Gamma$ and for each $n$,	
	\begin{eqnarray}
		&& \int_{\Gamma_n} |f-g|^2 e^{-\varphi}ds \nonumber \\
		& = & \int_{\Gamma_n} | \chi_{n-1}(f-g_{n-1}) + \chi_{n}(f-g_{n}) + \chi_{n+1}(f-g_{n+1})|^2 e^{-\varphi} ds    \nonumber \\
		& \leq  & 2 \int_{\Gamma_n} | \chi_{n-1}(f-g_{n-1}) + \chi_{n}(f-g_{n})|^2 e^{-\varphi} ds  + 2 \int_{\Gamma_n}  |\chi_{n+1}(f-g_{n+1})|^2 e^{-\varphi} ds    \nonumber \\
		& \leq  & 4\int_{\Gamma_n} | \chi_{n-1}(f-g_{n-1})|^2 e^{-\varphi} ds  + 4 \int_{\Gamma_n} | \chi_{n}(f-g_{n})|^2 e^{-\varphi} ds \nonumber \\ 
		 & & + 2 \int_{\Gamma_n} | \chi_{n+1}(f-g_{n+1})|^2 e^{-\varphi} ds    \nonumber \\
		& \leq &  \frac{1}{4}  \varepsilon_n.  \nonumber \\
	\end{eqnarray}	
	By using Theorem \ref{th:Carleman, Alexander} on the continuous function $g$ of $\Gamma$, we can find an entire function $F$ so that for each $n$
	\begin{eqnarray*}
		|F- g|^2 < \frac{\varepsilon_n}{4 \int_{\Gamma_n} e^{-\varphi}ds} \ \ \ \text{on} \ \  \Gamma_n.
	\end{eqnarray*}
Thus 
$$
 \int_{\Gamma_n}  |g - F|^2e^{-\varphi} ds < \frac {\varepsilon_n}{4}  .
$$
Hence we have 
\begin{eqnarray*}
	& &	\int_{\Gamma_n}|f - F|^2e^{-\varphi} ds  \nonumber \\
	& &	\leq  2 \int _{\Gamma_n}|f - g|^2e^{-\varphi} ds  + 2	\int _{\Gamma_n}|g - F|^2e^{-\varphi} ds   \nonumber \\
	& &	< \varepsilon_n. \nonumber \\
\end{eqnarray*}

Now we consider: Case 2: $\nu(\varphi) \geq 1$. We may list 
the points $\{\Gamma(t_j)\}_j$ with $\nu(\varphi)(\Gamma(t_j)) \geq 1$, where $\Gamma(t_j) = (t_j, \phi(t_j))$. Then 
there exists an entire function $Q$ which vanishes at each $\Gamma(t_j)$ to exact order $[\nu (\varphi)(\Gamma(t_j))]$. 
We may define $\sqrt{Q}$ to be continuous on $\Gamma$, without loss of generality we may set $Q(z)= \prod\limits_j (z-\Gamma(t_j))^{[\nu(\varphi)(\Gamma(t_j))]} e^{p_j(z)}$, where $p_j(z)$ are entire functions. Then there exists a subharmonic function $\psi$ such that $\varphi = \psi + \log |Q|$ with $\nu(\psi) < 1$ on $\Gamma$. Let $f \in L^2(\Gamma, \varphi)$. Then 
$
\frac{f}{\sqrt{Q}} \in L^2( \Gamma, \psi).
$
By Case 1, there exists an entire function $F$ so that for each $n$
\begin{equation}\label{transforforf1}
\int_{\Gamma_n}\left |\frac{f}{\sqrt{Q}} -F    \right|^2e^{-\psi} ds = \int_{\Gamma_n} |f -F \cdot \sqrt{Q}|^2e^{-\varphi} ds < \frac{\varepsilon_n}{4}.
\end{equation}
Thus it suffices to find an entire function $H$ vanishing at $\Gamma(t_j)$ to order $[\nu(\varphi)(\Gamma(t_j))]$ so that for each $n$
\begin{equation} \label{eq:findH}
\int _{\Gamma_n} \left|\sqrt{Q} -H\right|^2 e^{-\varphi}ds =  \int _{\Gamma_n} \left | 1-  \frac{H}{\sqrt{Q} }\right |^2 e^{-\psi} ds  < \frac{\varepsilon_n}{4 \max\limits_{\Gamma(t) \in \Gamma_n}|F(\Gamma(t))|^2}.
\end{equation}
We look for
$H$ for convenience as
$$H(z) = Q(z)  \cdot \widetilde{H}(z) = \prod_{j=1}^\infty (z-\Gamma(t_j))^{[\nu(\varphi)(\Gamma(t_j))]}e^{p_j(z)} \cdot \widetilde{H}(z).$$
The estimate \eqref{eq:findH} is then equivalent to 
find an entire function  $\widetilde{H}$ so that 
\begin{eqnarray}\label{eq:findhilte}
\int _{\Gamma_n} \left|1 -  \frac{Q \cdot \widetilde{H}}{\sqrt{Q}}\right|^2 e^{-\psi} ds < \frac{\varepsilon_n}{4 \max\limits_{\Gamma(t) \in \Gamma_n}|F(\Gamma(t))|^2},  \ \ \ \ \forall \ n.
\end{eqnarray}
Let $\delta_j>0$. Set 
$g(z)= \dfrac{1}{\sqrt{Q(z)}}$ except on arcs $\Gamma^j$ of $\Gamma$ with length $2\delta_j$ and center at $\Gamma(t_j)$. We can make $g$ continuous and 
$\vert \sqrt{Q} \cdot g\vert\leq 1$ on such arcs of length $2\delta_j$. 
Then 
\begin{eqnarray*}\label{eq3122}
\int _{\Gamma_n} \left |1 - \sqrt{Q} \cdot g\right\vert^2e^{-\psi}ds & \leq & 4 \sum_{\Gamma(t_j) \in J_n}\int_{\Gamma^j}e^{-\psi}ds.\nonumber\\
\end{eqnarray*}
Since $\bigcup\limits_{\Gamma(t_j) \in J_n} \Gamma^j$ is a measurable set and $e^{-\psi}\in L^1_{\text{loc}}$, we may choose $\delta_j$ sufficiently small in order to 
$$
\sum_{\Gamma(t_j) \in J_n}  \int_{\Gamma^j} e^{-\psi}ds < \frac{\varepsilon_n}{32 \max\limits_{\Gamma(t) \in \Gamma_n}|F(\Gamma(t))|^2}.
$$\\
Since $g$ is continuous on $\Gamma$, by the classical Carleman approximation theorem there exists an entire function $A$ satisfying for each $n$ 
$$
|g - A|^2  \leq \frac{\varepsilon_n}{8 \max\limits_{\Gamma(t) \in \Gamma_n} |Q(\Gamma(t))| \cdot \max\limits_{\Gamma(t) \in \Gamma_n}|F(\Gamma(t))|^2 \cdot \int_{\Gamma_n} e^{-\psi} ds}, \ \ \ \forall \Gamma(t) \in \Gamma_n.
$$
Then by Cauchy-Schwarz and the previous estimate, for each $n$, 
\begin{eqnarray*}
	&& \int_{\Gamma_n} \vert \sqrt{Q}\vert^2\vert g-A\vert^2 e^{-\psi}ds \nonumber \\ 
	&&= \int_{\Gamma_n}  \vert \sqrt{Q} \vert^2 \vert g-A\vert^2e^{-\psi}ds  \nonumber \\
	&& \leq  \left ( \int_{\Gamma_n}  \vert \sqrt{Q} \vert^4 \vert g-A\vert^2e^{-\psi}ds \right)^{\frac12} \cdot \left ( \int_{\Gamma_n} \vert g-A\vert^2e^{-\psi}ds \right)^{\frac12}\nonumber \\
	&& \leq   \max _{\Gamma(t) \in \Gamma_n} |Q(\Gamma(t))| \cdot \int_{\Gamma_n}\vert g-A\vert^2e^{-\psi}ds \nonumber \\
	&&\leq  \frac{\varepsilon_n}{8\max\limits_{\Gamma(t) \in \Gamma_n}|F(\Gamma(t))|^2} .\nonumber \\
\end{eqnarray*}
By taking $\widetilde{H} =A$, we get (\ref{eq:findhilte}) and then (\ref{eq:findH}). Hence there exist entire functions $F, Q, A$ so that for each $n$
\begin{eqnarray*}
	&& \int_{\Gamma_n} \vert f-F\cdot Q\cdot A\vert^2 e^{-\varphi}ds \nonumber \\ 
	& \leq & 2 \int_{\Gamma_n} \vert f-F\sqrt{Q}\vert^2 e^{-\varphi}ds + 2 \int_{\Gamma_n} \vert F\sqrt{Q} - F\cdot Q\cdot A\vert^2 e^{-\varphi}ds  \nonumber \\
	& \leq & \frac{\varepsilon_n}{2} + \frac{\varepsilon_n}{2}  \nonumber \\
	& = & \varepsilon_n.\nonumber \\
\end{eqnarray*}
\end{proof}

\section{Rectifiable non-Lipschitz arcs}
Here we construct a rectifiable non-Lipschitz arc $\gamma$ and a subharmonic function $\varphi$ in a neighborhood of $\gamma$ so that the conclusion of Theorem \ref{Weierstrass ab} does not hold. To find such an arc, we first look at the vertical arcs
$\gamma_a=\{(a,it),|t|\leq a\}$. Let $0<\alpha <1, z_0=0.$ 
We then notice that

\bea
|\gamma_a| = 2 a  \sim  a,
\eea
and
\bea
\int_{\gamma_a}\frac{ds}{|z-z_0|^\alpha} &  =  &  \int _{-a}^a \frac{1}{\left(\sqrt{a^2 + t^2}\right)^\alpha}dt. \nonumber \\
\eea
Since 
\begin{eqnarray*}
 \frac{2}{\sqrt{2}^\alpha} a^{1-\alpha} \leq  \int _{-a}^a \frac{1}{\left(\sqrt{a^2 + t^2}\right)^\alpha}dt  \leq  2a^{1-\alpha} \nonumber \\ 
\end{eqnarray*}
we get that $\int_{\gamma_a}\frac{ds}{|z-z_0|^\alpha}  \sim  a^{1-\alpha}$ uniformly in $\alpha.$\\

Let $b_n\in [0,1], n=1,2,3,\cdots, $ be a decreasing sequence which tends to 0. We will fix $b_n$ later. We remark that by setting $\varphi(z) = \sum\limits_{n=1}^\infty \alpha_n \log \left| \frac{z-b_n}{2} \right|$, where $\alpha_n=\frac{1}{n^3}$ a rapidly then $\varphi$ is subharmonic on $\Delta(0,2-b_1)$:  $\varphi$ is the limit of the decreasing sequence $\{ \varphi_k\}$ of subharmonic functions, $\varphi_k = \sum \limits_{n=1} ^k \alpha_n \log \left|\frac{z-b_n}{2} \right|$. 
Now let's build a rectifiable non-Lipschitz arc $\gamma$ such that
\begin{enumerate}
	\item \ \ \ $| \gamma |  < \infty$;	
	\item  \ \ \ 	$\int_{\gamma_n} e^{-\varphi}ds = \infty$. 
\end{enumerate} 
Here $\gamma_n$ is a curve with endpoints $(b_{n+1},0)$ and $(b_n,0)$ and $\gamma$
consists of the union of the $\gamma_n$ and the origin.
Define $c_n$ so that $$
c_n^{\frac{1}{1-\alpha_{n+1}}} = \frac{1}{n^2} \frac{\alpha_{n+1}}{1-\alpha_{n+1}} = \frac{1}{n^2 \left( (n+1)^3 -1 \right)}.
$$
We define $\{b_n\}$ by the following conditions: 
 $$
 b_n - b_{n+1} = c_n^{\frac{1}{1-\alpha_{n+1}}} =\frac{1}{n^2 \left( (n+1)^3 -1 \right)}
 $$
\noindent add the requirement that $b_n \rightarrow 0.$
Then we have that 
$$
b_n = \sum\limits_{k\geq n}^\infty (b_k - b_{k+1}) = \sum\limits_{k\geq n}^\infty \frac{1}{k^2 \left( (k+1)^3 -1 \right)}.
$$

Now define $b_n^k\in [ b_{n+1}, b_n]$ satisfying 
\begin{eqnarray} \label{bnk}
	b_n^k & = & b_{n+1} + \left(\frac{c_n}{k}\right)^{ \frac{1}{1-\alpha_{n+1}}} \nonumber \\
	& = & b_{n+1} + \frac{1}{n^2 \left( (n+1)^3 -1 \right)} \left(\frac{1}{k}\right)^{ \frac{1}{1-\alpha_{n+1}}}
\end{eqnarray}
Then $b_n^k\rightarrow b_{n+1}$ as $k\rightarrow \infty$ and $b_n^1=b_n.$
Finally, we define $\gamma_n = \left( \cup_{k=1}^\infty \gamma_n^k \right) \cup T_n\cup S_n  $, where 
$$
\gamma_n^k = : \{b_n^k + i y, 0 \leq y \leq b_n^k - b_{n+1} \},
$$
 $T_n= \cup_{\ell=0}^\infty \{y_x-b_{n+1}, b_n^{2\ell+2}\leq x\leq b_n^{2\ell+1}\}$ 
and $S_n=\cup_{\ell\geq 1}^\infty \{y=0, b_n^{2\ell+1}\leq x\leq b_n^{2\ell}\}$ which connect the $\gamma_n^k$ making $\gamma_n$ an arc. 
Then  $|\gamma_n^k| =  (b_n^k-b_{n+1})$, $|T_n| < \sqrt{2}(b_n-b_{n+1})=\sqrt{2}(b_n^1-b_{n+1})$
and $|S_n|<b_n-b_{n+1}$. Then, we have 
\begin{eqnarray*}
	|\gamma| &= &\sum\limits_{n=1}^\infty |\gamma_n|  \\
	&=&\sum\limits_{n=1}^\infty \sum\limits_{k=1}^\infty|\gamma_n^k| + \sum\limits_{n=1}^\infty |T_n| + \sum\limits_{n=1}^\infty |S_n|\nonumber \\
	& \leq & \sum\limits_{n=1}^\infty \sum\limits_{k=1}^\infty  (b_n^k-b_{n+1}) + \sum\limits_{n=1}^\infty (\sqrt{2}
+1)
(b_n^1-b_{n+1})\nonumber \\ 
	&=& \sum\limits_{n=1}^\infty \sum\limits_{k=1}^\infty \frac{1}{n^2 \left( (n+1)^3 -1 \right)} \left(\frac{1}{k}\right)^{ \frac{1}{1-\alpha_{n+1}}} + (\sqrt{2}+1) \sum\limits_{n=1}^\infty \frac{1}{n^2 \left( (n+1)^3 -1 \right)}\nonumber \\ 
\end{eqnarray*}
Since 
\begin{eqnarray*}
\sum\limits_{k=1}^\infty \left(\frac{1}{k}\right)^{ \frac{1}{1-\alpha_{n+1}}} \sim \int_1^\infty \left(\frac{1}{x}\right) ^{ \frac{(n+1)^3}{(n+1)^3-1}}dx= (n+1)^3-1
\end{eqnarray*}
we know that 
\begin{eqnarray}
	|\gamma|  \leq  C\sum\limits_{n=1}^\infty \frac{1}{n^2} + (\sqrt{2}+1) \sum\limits_{n=1}^\infty \frac{1}{n^2 \left( (n+1)^3 -1 \right)} <\infty. 
\end{eqnarray}
Thus $\gamma$ is a rectifiable non-Lipschitz arc.\\
On the other hand, we have
\begin{eqnarray} \label {gamman}
	\int_{\gamma_n} e^{-\varphi}ds & \geq &\sum\limits_{k=1} ^ \infty \int_{\gamma_n^k} e^{-\varphi}ds \nonumber \\
	&\geq & \sum\limits_{k=1} ^ \infty  \int_{\gamma_n^k} \frac{1}{|z-b_{n+1}|^{\alpha_{n+1}}}ds \nonumber \\
	& \sim & \sum\limits_{k=1} ^ \infty  (b_n^k - b_{n+1})^{1-\alpha_{n+1}} \nonumber \\
	& \sim & \sum\limits_{k=1} ^ \infty  \frac{c_n}{k}   \ \ \ \ \ \text{by (\ref{bnk})} \nonumber \\
	& \sim & \int_1^\infty \frac{c_n}{x} dx \nonumber \\
	& = & \infty, \text{ $\forall n$}.
\end{eqnarray}
Now we will prove that polynomials are not dense in $L^2(\gamma, \varphi)$. By contradiction, for each $f\in L^2(\gamma, \varphi)$, if there exists a sequence of polynomials $P_N$ so that 
$$
\int_{\gamma} |f-P_N|e^{-\varphi} ds \rightarrow 0,  \ \ \text{if} \ \ N\rightarrow \infty,
$$
then by (\ref{gamman}) we have $P_N(b_n) = 0$ for any $n$ if $N$ is sufficiently large. Since $b_n \rightarrow 0$ by uniqueness property of holomorphic function we know $P_N\equiv 0$. Thus $\int_{\gamma}|f|^2e^{-\varphi}ds =0$. That is $f =0$ a.e. on $\gamma$. Thus $L^2(\gamma, \varphi) = \{0\}$. On the other hand $f(z) := e^{\frac{\varphi}{2}} =\sqrt{\Pi \left|\frac{z-b_n}{2} \right|^{\alpha_n}} \in L^2(\gamma, \varphi)$. This is a contradiction. 
\begin{remark}
In this example, there are no non-zero polynomials in $L^2(\gamma, \varphi)$ and polynomials are not dense in $L^2(\gamma, \varphi)$. The key to this example is that Theorem \ref{expintreal} does not hold for rectifiable non-Lipschitz arcs. However, we don't know if there exists a rectifiable non-Lipschitz arc $\gamma$ and a subharmonic function $\varphi$ so that all the polynomials are in $L^2(\gamma, \varphi)$ but not dense in it.\\
\end{remark}

\textbf{Acknowledgements} The first author was supported in part by Rannis-grant 152572-051. The second author was supported by the Norwegian Research Council grant 240569. The third author was supported by the NSFC grant 11601120 and Norwegian Research Council grant 240569.

\end{document}